\title{Speed exponents of random walks on groups}
\date{\today}
\author{Gideon Amir  \and B\'alint Vir\'ag}
\theoremstyle{plain}
    \newtheorem{theorem}{Theorem}
    \newtheorem{lemma}[theorem]{Lemma}
    \newtheorem{proposition}[theorem]{Proposition}
    \newtheorem{corollary}[theorem]{Corollary}
    \newtheorem{conjecture}[theorem]{Conjecture}
    \newtheorem{problem}[theorem]{Problem}
    \newtheorem{question}[theorem]{Question}
\theoremstyle{definition} 
    \newtheorem{remark}[theorem]{Remark}
\newcommand{\Z}{{\mathbb Z}}
\newcommand{\E}{{\mathbb E}}
\newcommand{\pr}{{\mathbf P}}
\newcommand{\R}{{\mathbb R}}
\newcommand{\one}{{\mathbf 1}}
\newcommand{\tree }{\mathbb T}
\newcommand{\Aut}{\operatorname{Aut}}
\newcommand{\Ent}{H}
\renewcommand{\bar}{\overline}
\newcommand{\ev}{\mbox{\bf E}}
\newcommand{\res}{\mathfrak R}
\newcommand{\mm}{{\mathcal M_{m}}}
\newcommand{\tm}{{\tree_{m}}}
\newcommand{\ttm}{{\tree_{\tau m}}}
\DeclareMathOperator{\supp}{supp}
\begin{document}
\maketitle
\begin{abstract}
For every $3/4\le \beta< 1$ we construct a finitely generated group so that the expected distance of the simple random walk from its starting point satisfies  $\ev |X_n|\asymp n^{\beta}$. In fact, the speed can be set precisely to equal any nice prescribed function up to a constant factor.
\end{abstract}

\section{Introduction}\label{s.intro}

The central question in the theory of random walks on groups is how symmetries of the underlying space give rise to structure and rigidity of the random walks. For example, for nilpotent groups, it follows easily from bounds in \cite{HSc} that random walks have diffusive behavior, namely that the rate of escape, defined as the expected distance of the walk from the identity satisfies \footnote{The notation $f\asymp g $ denotes that there exist constants $c,C$ independent of $n$ such that $cf(n)\leq g\leq Cf(n)$.}  $$\ev |X_n| \asymp n^{1/2}.$$  (See also \cite{Thompson} for more classes with this behavior.) On nonamenable groups, on the other hand, we have
$$\ev |X_n| \asymp n.$$
Starting with the works of \cite{Kesten59}, the rate of escape has been connected to many other properties of the group, including the spectral radius, the volume growth, rate of entropy growth, and the Liouville property. More recently, \cite{ANP} and \cite{NaorPeres09} have established a beautiful connection between the rate of escape and embeddings of groups into Hilbert space, see \eqref{e:ANP}.

Their work renewed interest in a long-standing question, attributed to Vershik, namely, what rates of escape are possible. In the formulation of Peres (personal communication):
$$
\mbox{For what $\beta$ is there a random walk on a group with }\ev |X_n| = n^{\beta+o(1)}?
$$
The only known answers to this question were given by \cite{ErschlerDrift}, who realized that for a random walk on a group $G$ satisfying $\ev |X_n| \asymp n^\beta$, the wreath product $\mathbb Z\wr G$ has $\ev|X_n| \asymp n^{\frac{\beta+1}2}$.

Thus Erschler showed that the canonical random walk on the $k-1$-times iterated wreath product of $\mathbb Z$ satisfies
$$\ev |X_n|\asymp n^{1-2^{-k}}.$$

The main goal of this paper is to show that in fact, all speed exponents between $3/4$ and $1$ are achievable in residually finite groups. In fact, we show much more: our groups allow us to adjust precisely $\ev |X_n|\asymp f(n)$ for a general class of functions $f$. For this, we use the permutation wreath product of $\mathbb Z$ and the action $S$ of the so called piecewise mother groups $\mm$  on the boundary of their tree; see Section \ref{s:p mother groups} for definitions.

\begin{theorem}\label{t:main speed}
For any $\gamma\in[3/4,1)$ and any function $f:\R_+\rightarrow \R_+$ satisfying
$f(1)=1$ and the log-Lipshitz condition that for all real $a,n\ge 1$
$$a^{3/4}f(n)\leq f(an)\leq a^\gamma f(n)$$
 there is
a bounded sequence $(m_\ell)$ such the random walk on $\mathbb  Z \wr_S \mm $ satisfies that with constants depending on $\gamma$ only,
$
\ev|X_n|  \asymp f(n).
$
\end{theorem}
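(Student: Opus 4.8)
The plan is to reduce the rate of escape on $\Z\wr_S\mm$ to a geometric functional of the random walk on the Schreier graph of the action $S$ of $\mm$ on $\partial\tree$, and then to produce that functional by choosing the sequence $(m_\ell)$. Write $X_n=(\phi_n,g_n)$ with $g_n\in\mm$ and $\phi_n:\partial\tree\to\Z$ finitely supported, and let $\ell_n(y)$ be the number of steps $\le n$ at which the cursor sits over $y$. The standard description of the word metric on a permutation wreath product gives, up to universal constants, $|X_n|\asymp \|\phi_n\|_1+(\text{length of a tour of }\supp\phi_n\text{ in the Schreier graph})+d_{\mm}(e,g_n)$; moreover any word for $X_n$ uses at least $\|\phi_n\|_1$ lamp moves and its image in $\mm$ represents $g_n$, so $\|\phi_n\|_1\le|X_n|$ and $d_{\mm}(e,g_n)\le|X_n|$ for free. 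Conditionally on the itinerary of the cursor, $\phi_n(y)$ is a centered $\Z$-valued random walk run for a number of steps comparable to $\ell_n(y)$, whence $\ev\|\phi_n\|_1\asymp\ev\sum_y\sqrt{\ell_n(y)}=:\Psi(n)$. The remaining point in Step 1 is to bound the tour cost: the Schreier graphs here are, at every resolution, uniformly close to line-like graphs, so the tour cost of a set is comparable to its diameter, hence to the range of the cursor, which is at most $\sum_y\sqrt{\ell_n(y)}$. This yields $\ev|X_n|\asymp\Psi(n)$.

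The second step is to compute $\Psi(n)$ in terms of $(m_\ell)$. The tree $\tree$ is level-graded and $S$ is self-similar with controlled defects at the levels: up to an explicit change of time, the cursor walk first equilibrates on the top levels and progressively activates deeper ones, the passage from level $\ell-1$ to level $\ell$ occurring at a scale $T_\ell$ equal to a product over $j\le\ell$ of quantities depending on $m_j$, and on each scale the new level contributes essentially a one-dimensional local-time profile rescaled by the branching dictated by $m_\ell$. Summing these contributions shows that $\Psi$ agrees, up to constants, with a concrete functional $\Psi_{(m_\ell)}(n)$ that is piecewise a power law on each interval $[T_\ell,T_{\ell+1}]$, with local exponent a monotone function of $m_\ell$ sweeping out $[3/4,1)$: at the smallest admissible value of $m$ the base walk is essentially one-dimensional and one recovers the $\Z\wr\Z$ behaviour $\Psi\asymp n^{3/4}$, while as $m$ grows the exponent increases towards, but never reaches, $1$. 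In particular a constant sequence already realizes $\ev|X_n|\asymp n^{\gamma}$ for a dense set of $\gamma\in[3/4,1)$.

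The third step is to match a general $f$. The log-Lipschitz hypothesis says precisely that on every scale the local growth exponent of $f$ lies in $[3/4,\gamma]$, which is exactly the range achievable by $\Psi_{(m_\ell)}$ with $m_\ell$ bounded by some $m(\gamma)$; so we choose $m_\ell$ level by level so that the local exponent of $\Psi_{(m_\ell)}$ on $[T_\ell,T_{\ell+1}]$ agrees with that of $f$. The two-sided bound $a^{3/4}f(n)\le f(an)\le a^\gamma f(n)$ also forces $f$ to vary slowly across each scale, so this piecewise fitting accumulates only bounded multiplicative error and the resulting sequence $(m_\ell)$ is bounded, with a bound depending on $\gamma$ alone. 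Telescoping then gives $\Psi_{(m_\ell)}(n)\asymp f(n)$ with constants depending on $\gamma$ only, which together with Step 1 is the theorem.

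I expect the crux to be the two-sided control of $\Psi(n)$ in Step 2, and especially its lower bound: one must show that the cursor walk genuinely spreads over $\partial\tree$ at the predicted rate, that the accumulated local times are not overly concentrated on a few points, and that no degeneracy occurs at the transitions between levels where the parameter $m_\ell$ changes. This calls for renewal- and coupling-type estimates for the $\mm$-walk that are uniform over the admissible bounded sequences, together with second-moment bounds for $\sum_y\sqrt{\ell_n(y)}$. By comparison, the matching upper bounds, the verification of the word-metric estimate in Step 1, and the slow-variation bookkeeping in Step 3 should be routine once the Schreier-graph geometry is under sufficient control.
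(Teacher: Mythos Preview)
Your lower bound idea and the level-by-level matching in Step~3 are in the right spirit and close to what the paper does (Theorem~\ref{t:speed_lb} and Lemma~\ref{l:choosing m}). The serious gap is the upper bound in Step~1.

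The formula you call the ``standard description of the word metric on a permutation wreath product'' is not correct. In an ordinary wreath product $\Lambda\wr\Gamma$ with the Cayley action, the word length of $(\phi,g)$ is indeed comparable to $\|\phi\|_\Lambda$ plus a travelling-salesman cost through $\supp\phi\cup\{e,g\}$ in the Cayley graph of $\Gamma$. In a permutation wreath product $\Lambda\wr_S\Gamma$ this fails: to realize $(\phi,g)$ you need a word $w=w_1\cdots w_k$ in $\Gamma$ representing $g$ whose \emph{inverted orbit} $\{o,\,o.w_1^{-1},\,o.w_2^{-1}w_1^{-1},\ldots\}$ covers $\supp\phi$, and the relevant cost is the length $k$ of $w$ as a word in $\Gamma$, not the length of any path in the Schreier graph. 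These quantities have no a priori relation; the Schreier graph here projects to a line, but short words in $\mm$ have no reason to have inverted orbits tracing out intervals of that line. So ``the tour cost of a set is comparable to its diameter in the Schreier graph'' does not bound what you actually need. You also give no upper bound at all for $d_{\mm}(e,g_n)$; the inequality $d_{\mm}(e,g_n)\le|X_n|$ points the wrong way.

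The paper's route around this is indirect and does not use Schreier-graph geometry for the upper bound. It bounds $H(X_n)$ by $H(Y_n)+H(\supp Q_n)+\ldots$, shows that $H(Y_n)$ and $H(\supp Q_n)$ are each at most a constant times $\ev|Q_n|$ via a combinatorial analysis of the automaton structure (the ray-tree lemmas, Corollary~\ref{c:entropies}), and then converts entropy to speed via Varopoulos--Carne (Proposition~\ref{p:VC}). A naive application leaves an extra $\sqrt{\log n}$; removing it requires a separate trick (Lemma~\ref{l:Gady} and Theorem~\ref{t:upper_tight}), passing through the auxiliary wreath product $\Z_2\wr_S\mm$ to manufacture a short $\Gamma$-word whose inverted orbit covers $\supp Q_n$. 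None of this is visible in your outline, and I do not see how to make a direct geometric argument work without it.

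A smaller but related point: keep the inverted orbit $o.Y_i^{-1}$ separate from the ordinary Schreier walk $o.Y_i$. The lamps are switched along the former, which is not even a Markov chain on $S$; the paper first relates its occupation measure to return times of the latter (Proposition~\ref{p:rt_renewal}, Corollary~\ref{c:Qn sum return}), and it is the \emph{ordinary} walk that projects to the birth-and-death chain you gesture at in Step~2.
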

For the theorem and throughout this paper, all random walks are symmetric and the step distribution has finite support.
While the actual group construction for Theorem \ref{t:main speed} is simple, the theory behind the analysis is more sophisticated and has many potential applications. It is based on the analysis of permutation wreath products (see Section \ref{s.speed in pw} for definitions).

\begin{theorem}\label{t:maingeneral}
Let $Y_n$ be a random walk on a group $\Gamma$ which acts on a set $S$. Fix $o\in S$,and let $p_n$ be the probability that the random walk $o.Y_n$ on $S$ does not return to $o$ in the first $n$ steps.
Assume that there is a constant $c$ so that for all  $n\ge 1$ we have
\begin{equation}\label{e:cond1}
\ev |Y_n| + H[\{o, o.Y_1^{-1},\ldots, o.Y_n^{-1}\}]+(p_0+\ldots +p_n) \le c np_n,
\end{equation}
where the $H$ term is the entropy of a random set called the \textbf{inverted orbit}.
Let $\Lambda$ be an infinite group and let $\lambda(n)$ be the expected displacement for a generating random walk in $\Lambda$. Then the switch-walk-switch random walk $X_n$ on the permutation wreath product $\Lambda \wr_S \Gamma$ satisfies
$$
\ev |X_n| \asymp np_n \lambda(1/p_n).
$$
\end{theorem}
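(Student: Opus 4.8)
The plan is to identify $|X_n|$, up to constants, with two geometric quantities attached to the walk on $S$, estimate each, and see how the three clauses of \eqref{e:cond1} are used. Write $X_n=(L_n,Y_n)$ in permutation wreath coordinates; pushing the switch-walk-switch lamp increments past the group increments shows that $L_n$ is supported on the inverted orbit $O_n=\{o,o.Y_1^{-1},\dots,o.Y_n^{-1}\}$, and that, conditionally on the $\Gamma$-trajectory $\mathcal T$, the values $\{L_n(s)\}_{s\in O_n}$ are independent generating $\Lambda$-walks run for times comparable to $\ell_s:=\#\{k\le n: o.Y_k^{-1}=s\}$, so $\sum_s\ell_s\asymp n$ and $\#\{s:\ell_s\ge1\}=|O_n|$. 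A standard identity gives $\ev|O_n|=p_0+\dots+p_n$, which by monotonicity of $(p_k)$ lies between $np_n$ and, by \eqref{e:cond1}, $c\,np_n$, so $\ev|O_n|\asymp np_n$. The permutation wreath word length then satisfies $|X_n|\asymp T_n+\Sigma_n$, where $\Sigma_n:=\sum_s|L_n(s)|_\Lambda$ and $T_n$ is the least length of a walk in the Schreier graph of $S$ that starts at $o$, visits every site of $\supp L_n$, and ends at $o.Y_n$ (with an extra additive $|Y_n|$ to pin the cursor to $Y_n$ itself).

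\emph{Upper bound.} The trajectory exhibits $\{o\}\cup\supp L_n$ inside the connected set of sites visited by the walk on $S$, so a depth-first traversal of a spanning tree of that set gives $T_n\le 2|O_n|+|Y_n|$, whence $\ev T_n\lesssim np_n$ using $\ev|O_n|\asymp np_n$ and $\ev|Y_n|\le c\,np_n$. For $\Sigma_n$, subadditivity of $\lambda$ gives $\lambda(\ell)\le(\ell p_n+1)\lambda(\lceil1/p_n\rceil)$, hence $\ev[\Sigma_n\mid\mathcal T]\asymp\sum_s\lambda(\ell_s)\le(np_n+|O_n|)\lambda(\lceil1/p_n\rceil)$ and $\ev\Sigma_n\lesssim np_n\lambda(1/p_n)$, after absorbing the ceiling into the mild regularity of a speed function. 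As $\lambda\ge\lambda(1)>0$ the $T_n$ term is dominated, so $\ev|X_n|\lesssim np_n\lambda(1/p_n)$.

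\emph{Lower bound.} Here $|X_n|\ge\Sigma_n$, so it suffices to show $\ev\Sigma_n\gtrsim np_n\lambda(1/p_n)\asymp\ev|O_n|\cdot\lambda(1/p_n)$. It is enough to exhibit a definite proportion of the roughly $np_n$ visited sites with $\ell_s\gtrsim1/p_n$ and then apply $\lambda$ site by site --- no convexity of $\lambda$ is needed, since on the single scale $1/p_n$ the relevant local times satisfy $\ev\lambda(\ell_s)\asymp\lambda(\ev\ell_s)$. To produce heavily visited sites one splits time at $n/2$: the clause $p_0+\dots+p_n\le c\,np_n$ forces $(p_k)$ to decay at most polynomially, hence $p_{\lfloor n/2\rfloor}\asymp p_n$ and $\ev|O_{\lfloor n/2\rfloor}|\asymp\ev|O_n|$, so $\asymp np_n$ sites are reached in the first half; by the Markov property and the non-return probabilities each such site then collects expected local time of order $1/p_{\lfloor n/2\rfloor}\asymp1/p_n$ over the remaining $\ge n/2$ steps, and a Paley--Zygmund argument turns this into a positive proportion of sites with $\ell_s\gtrsim1/p_n$. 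The remaining clause, $H[O_n]\le c\,np_n$, is what promotes this annealed estimate to a quenched statement about a proportion of the realized sites: an orbit of small entropy cannot support a ``top-heavy'' exploration in which almost all of the $n$ units of local time pile onto a vanishing fraction of the sites while the other $\asymp np_n$ visited sites are each touched $O(1)$ times, because performing such an exploration over an essentially predictable (e.g.\ interval-like) orbit would already make the local times balanced.

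\emph{The main obstacle} is exactly this last passage: making the exclusion of a pathological local-time profile quantitative, and so extracting $\asymp np_n$ sites with $\ell_s\gtrsim1/p_n$. The three clauses of \eqref{e:cond1} are calibrated for it --- $\ev|Y_n|\le c\,np_n$ keeps the cursor, and hence the orbit, from wandering; $p_0+\dots+p_n\le c\,np_n$ pins $|O_n|\asymp np_n$ and the polynomial decay of $(p_k)$; and $H[O_n]\le c\,np_n$ forces the exploration to be spread out --- and tracking the Paley--Zygmund constants together with the way $H[O_n]$ bounds the relevant fluctuations is where the work lies. By contrast the normal-form computation, the word-length formula, and the upper bound are routine.
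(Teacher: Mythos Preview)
Your upper bound contains a genuine gap. You claim that ``the trajectory exhibits $\{o\}\cup\supp L_n$ inside the connected set of sites visited by the walk on $S$'' and then bound $T_n$ by a spanning-tree traversal of that set. But $\supp L_n$ lies in the \emph{inverted} orbit $\{o.Y_k^{-1}\}$, not in the forward range $\{o.Y_k\}$; the paper emphasizes (Section~\ref{s.speed in pw}) that the inverted orbit is in general neither Markovian nor even connected in the Schreier graph, and there is no reason it is contained in the forward range. Even granting containment, a Schreier path of length $\ell$ yields a word in $\Gamma$ whose \emph{forward} orbit has that size; what you need is a word \emph{equal to $Y_n$ in $\Gamma$} whose \emph{inverted} orbit covers $\supp L_n$, and an additive $|Y_n|$ only fixes the image $o.Y_n$, not the group element. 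The paper resolves this by an entirely different device: Lemma~\ref{l:Gady} manufactures such a word of expected length at most $3\,\ev|X_n'|$, where $X_n'$ is the switch-walk-switch walk on $\mathbb Z_2\wr_S\Gamma$, and then bounds $\ev|X_n'|$ through the entropy estimate \eqref{e:finitelamp} together with the Varopoulos--Carne inequality (Proposition~\ref{p:VC}), as packaged in Theorem~\ref{t:upper_tight}. This is exactly where the clauses $H[O_n]\le c\,np_n$ and $\ev|Y_n|\le c\,np_n$ of \eqref{e:cond1} are spent; the resulting error term $\sqrt{n\cdot np_n}=n\sqrt{p_n}$ is then absorbed using $\lambda(k)\ge c\sqrt{k}$ for infinite $\Lambda$.

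You have also inverted the role of the hypotheses in the lower bound. The paper's argument (Theorem~\ref{t:speed_lb}) uses no entropy assumption whatsoever. Proposition~\ref{p:rt_renewal} identifies the set of times $\{k\le n:o.Y_k^{-1}=o.Y_t^{-1}\}$ with a two-sided renewal process whose gap law is that of the first return time $T$; Corollary~\ref{c:Qlower} then gives $\pr(Q_n(o.Y_t^{-1})\le k)\ge 1-2\pr(T\le n)^{k/2}$ directly. Taking $k=2/p_n$ shows that at least $n/4$ of the times are ``thin'', and a short subadditivity/packing argument for $\underline\lambda$ yields $\ev|X_n|\ge\frac{1}{16}np_n\underline\lambda(1/p_n)$. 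Your time-splitting plus Paley--Zygmund outline, with $H[O_n]$ invoked to ``promote annealed to quenched'' and exclude top-heavy local-time profiles, is both unnecessary and, as you yourself flag, not actually carried out; the renewal structure gives the quenched statement for free.
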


The main reason this theorem is useful is that it gives control of the expected displacement $\ev |X_n|$, usually a difficult quantity, in terms of the function $p_n$, which is often easier to compute. The condition \eqref{e:cond1} ensures that the action is significant enough to determine the speed of the switch-walk-switch random walk.

 For a simple example,
consider $\Gamma = \mathbb Z$, with the free (Cayley) action. Then we are back to the ordinary wreath product, $p_n\asymp n^{-1/2}$, and \ev $|Y_n|\asymp n^{1/2}$, and the entropy in \eqref{e:cond1} is of order $\log n$. Repeated application of Theorem \ref{t:maingeneral} then recovers the result of \cite{ErschlerDrift}.

Because of Theorem \ref{t:maingeneral}, understanding speed and entropy in wreath products can boil down to understanding return times of random walks on Schreier graphs, a much easier task.  Finally, we make this easier by constructing a family of Schreier graphs on which the random walk simply projects to nonnegative birth-and-death Markov chains. These are chains on the nonnegative integers where only nearest-neighbor moves allowed; the random walk there can be analyzed many ways, including electrical network theory.
The corresponding action satisfies the condition \eqref{e:cond1}, and Theorem \ref{t:main speed} holds since $p_n$ can be adjusted sufficiently precisely (in fact to any function as specified in Theorem \ref{t:main speed} with $3/4$ replaced by $1/2$.) Then by Theorem \ref{t:maingeneral}, the $\ev|X_n|$ can be determined up to a constant from the expected displacement in  $\Lambda$ for any infinite group $\Lambda$.

Our analysis is precise enough to give a quantitative improvement on Brieussel's theorem on entropy. In \cite{BrieusselEntropy} it is shown that entropy exponents between $1/2$ and $1$ are realizable; the same analysis can be extended to $H(X_n)=f(n)n^{o(1)}$ for a general class of functions $f$. Here we show that in fact the random walk entropy can also be adjusted much more precisely, up to a constant factor.
\begin{theorem}\label{t:main entropy}
For any $\gamma\in[1/2,1)$ and any function $f:\R_+\rightarrow \R_+$ satisfying
$f(1)=1$ and the log-Lipshitz condition that  for all real $a,n\ge 1$
$$a^{1/2}f(n)\leq f(an)\leq a^\gamma f(n)$$
 there is
a bounded sequence $(m_\ell)$ such the random walk on $\mathbb Z_2 \wr_S \mm $ satisfies
$
H(X_n) \asymp f(n)$, and the implied constants depend on $\gamma$ only.
\end{theorem}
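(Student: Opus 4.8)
The plan is to run the machine behind Theorem~\ref{t:main speed} with one new analytic input: the \emph{entropy analog of Theorem~\ref{t:maingeneral}}, valid also for \emph{finite} $\Lambda$ (Theorem~\ref{t:maingeneral} itself assumes $\Lambda$ infinite, whereas here $\Lambda=\mathbb Z_2$). I would first show that under hypothesis~\eqref{e:cond1} the switch-walk-switch walk $X_n$ on $\Lambda\wr_S\Gamma$ satisfies
$$
H(X_n)\asymp n p_n\,\eta_\Lambda(1/p_n),
$$
the exact counterpart of $\ev|X_n|\asymp np_n\lambda(1/p_n)$ with the displacement $\lambda$ replaced by the entropy $\eta_\Lambda(t)=H(Z_t)$ of the $\Lambda$-walk after $t$ steps. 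Since $\eta_{\mathbb Z_2}(t)\asymp 1$ for every $t\ge 1$, for $\Lambda=\mathbb Z_2$ this reads $H(X_n)\asymp np_n$; this is precisely why the lower exponent drops from $3/4$ to $1/2$, as there is now no factor $\lambda_{\mathbb Z}(1/p_n)\asymp p_n^{-1/2}$ to pay. The lower bound $H(X_n)\gtrsim np_n$ is the easy half: given the $\Gamma$-trajectory $(Y_0,\dots,Y_n)$, the terminal lamp configuration is a product of independent $\Lambda$-coordinates indexed by the inverted orbit $O_n=\{o,o.Y_1^{-1},\dots,o.Y_n^{-1}\}$, each coordinate being a sum of at least one i.i.d.\ symmetric $\Lambda$-switch and hence of conditional entropy bounded away from $0$; since conditioning cannot increase entropy, $H(X_n)\ge H(\mathrm{config})\ge H(\mathrm{config}\mid\mathrm{trajectory})\gtrsim\ev|O_n|$, and $\ev|O_n|\asymp np_n$ --- the lower bound on $\ev|O_n|$ being immediate because $p_k$ is non-increasing, so $\sum_{k\le n}p_k\ge (n+1)p_n$, while the matching upper bound $\sum_{k\le n}p_k\le cnp_n$ is part of~\eqref{e:cond1}. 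The upper bound follows from subadditivity,
$$
H(X_n)\le H(Y_n)+H(O_n)+\ev|O_n|\log|\Lambda|\lesssim H(Y_n)+np_n,
$$
using $H(O_n)\le cnp_n$ from~\eqref{e:cond1} together with $\ev|O_n|\asymp np_n$; the remaining point is then to bound the base-group entropy by $H(Y_n)\lesssim np_n$.

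For the second step I would take exactly the group and action used for Theorem~\ref{t:main speed}: $\Gamma=\mm$ acting on the boundary of its rooted tree by the action $S$ of Section~\ref{s:p mother groups}, chosen (Section~\ref{s.speed in pw}) so that the Schreier walk projects onto a nearest-neighbour birth-and-death chain on $\mathbb Z_{\ge 0}$. For such a chain $p_n$ is controlled by effective resistances, hence by the conductances, so the bounded sequence $(m_\ell)$ can be tuned to prescribe $np_n\asymp f(n)$ for any $f$ obeying the stated log-Lipschitz bound; this is the realizability quoted after Theorem~\ref{t:main speed}, and the hypothesis on $f$ in Theorem~\ref{t:main entropy}, with lower exponent $1/2$, is exactly the class of admissible functions. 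One also reuses the verification that this action satisfies~\eqref{e:cond1}: $p_0+\dots+p_n\le cnp_n$ is automatic by monotonicity, and $\ev|Y_n|$, $H[O_n]$, $\ev|O_n|$ are all $O(np_n)$ because displacement in $\mm$ is comparable to the range of the birth-and-death chain (the bounded-activity structure of the mother group, together with near-injectivity of the orbit map on the line-like orbital graphs). Putting the two steps together, $H(X_n)\asymp np_n\asymp f(n)$, with all implied constants depending only on $\gamma$ through the log-Lipschitz constants. Note that the cruder estimate $H(X_n)=f(n)n^{o(1)}$ already follows from Brieussel's analysis~\cite{BrieusselEntropy} extended to general $f$; the constant-factor sharpening is exactly what the sharp entropy formula of Step~1 and the sharp tuning of Step~2 supply.

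The \textbf{main obstacle} is the two-sided control of the base-group entropy, $H(Y_n)\asymp np_n$, for $Y_n$ the walk on $\mm$ itself --- in particular the upper bound $H(Y_n)\lesssim np_n$, which is what prevents the ambient group from contributing entropy beyond the lamps and is \emph{not} formally contained in~\eqref{e:cond1}. I expect to obtain it from the recursive, self-similar structure of $\mm$: up to commensurability $\mm$ is itself assembled from permutation wreath products over birth-and-death actions of the same kind, so its entropy is again governed by an inverted-orbit size, and one closes the loop by an induction in the spirit of Erschler's iterated wreath products and Brieussel's entropy computation. A secondary delicate point is the lower bound $\ev|O_n|\gtrsim np_n$ for the \emph{inverted} (rather than forward) orbit: this rests on the orbit map being injective enough along the line-like orbital graphs that ``$o.Y_k^{-1}$ is fresh at time $k$'' has probability comparable to $p_k$, after which $\sum_{k\le n}p_k\asymp np_n$ finishes the estimate. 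Everything else --- the conditioning identity for the lamp configuration, the electrical-network tuning of $p_n$, and the displacement and entropy-set bounds in~\eqref{e:cond1} --- is routine given the machinery already developed for Theorems~\ref{t:maingeneral} and~\ref{t:main speed}.
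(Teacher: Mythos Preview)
Your overall plan mirrors the paper's: both reduce to $H(X_n)\asymp\ev|Q_n|$ for $\Lambda=\mathbb Z_2$ (the paper writes this as $n^{\alpha_n}$, which by Corollaries~\ref{c:Qn sum return} and~\ref{c:return_probs_wreath} is $\asymp np_n\asymp\ev|Q_n|$) and then tune $(m_\ell)$ via Lemma~\ref{l:choosing m}. Your lower bound via conditional lamp entropy is exactly the paper's (last display in Theorem~\ref{t:entropy m general}), and your upper-bound decomposition is~\eqref{e:finitelamp}.

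The gap is precisely where you flag the main obstacle: the bound $H(Y_n)\lesssim\ev|Q_n|$ on the base group $\mm$. You propose to extract it from the self-similar structure ``in the spirit of \ldots Brieussel's entropy computation''. But the paper explicitly notes (paragraph before Corollary~\ref{c:entropies}) that the recursive contraction arguments of \cite{BrieusselEntropy} and \cite{BKN} yield only $H(Y_n)\le\ev|Q_n|\cdot n^{o(1)}$; that suffices for the weaker $H(X_n)=f(n)\,n^{o(1)}$ you cite, but not for the constant-factor statement of Theorem~\ref{t:main entropy}. The paper's new ingredient, absent from your plan, is a direct combinatorial count: the inverted orbit, viewed as a union of rays in $\tm$, forms a \emph{ray tree} whose pruned version has at most $3|Q_n|$ vertices (Lemma~\ref{l:number of ray trees}); the sections of $Y_n$ at the leaves of this pruned tree are forced to be either rooted permutations or single propagating actions (Lemma~\ref{l:raytree1}), so the number of group elements compatible with a given ray tree is $\exp(O(|Q_n|))$ (Lemma~\ref{l:nr group elements per ray tree}). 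This yields $H(Y_n\mid|Q_n|)\le C(m_*)\,\ev|Q_n|$ and, simultaneously, the required $H(\supp Q_n)\le C(m_*)\,\ev|Q_n|$ (Corollary~\ref{c:entropies}), both with constants depending only on $m_*$ and hence on $\gamma$. A minor slip: ``$p_0+\dots+p_n\le cnp_n$ is automatic by monotonicity'' is backwards --- monotonicity of $p_k$ gives $\sum_{k\le n} p_k\ge(n+1)p_n$; the upper bound genuinely needs the regularity $p_n\asymp n^{\alpha_n-1}$ from Corollary~\ref{c:return_probs_wreath}.
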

The piecewise mother groups $\mm$ that we are using are related to those in the paper of Brieussel; the main difference is that our analysis is based on a careful geometric understanding of the birth-and-death chain structure of the underlying Markov chains; also, it is key in our analysis of the speed to let $\Lambda$ be an infinite group.

The groups $\mm$ are versions of automation groups. The most famous of such groups is the celebrated group of \cite{Grigorchuk84}. The rate of escape on automaton groups was first studied by \cite{BV05}, who used its sublinearity to show that the Basilica group is amenable. These results were extended to so-called degree zero automaton groups by \cite{BKN}, and for degree one groups by \cite{AAV}. In \cite{AV}, it was shown that degree three groups typically have linear rate of escape.

The idea to use permutation wreath product over these groups is due to\\ \cite{BartholdiErschler}; they used the Grigorchuk group as a base group to determine the exponent of the stretched exponential growth of its permutation wreath product; to do so for any intermediate growth group had been an open question. \cite{BrieusselGrowth} used a piecewise version of this construction (the likeness of which was used earlier by \cite{ErschlerFollner} to give examples for Follner functions) to show that all exponents within a range are possible. Later, \cite{PakKassabov} gave a different construction. Most recently, \cite{BrieusselEntropy} used the same groups we use to show that entropy exponents can freely vary between $1/2$ and $1$.

 The permutation wreath product can be thought of as a relative of the usual wreath product $\Lambda \wr \Gamma$, also known as the lamplighter product of $\Lambda$ and $\gamma$.  Thus we borrow terminology and think of $\Lambda$ as the "lamp group" and of the switch-walk-switch walk as a type of Lamplighter walk. This analogy is limit    ed though and must be used with care. For example, perhaps surprisingly, the lamplighter corresponding to a permutation wreath product does not light lamps along the random walk  path $o.Y_n$ on the corresponding Schreier graph with root $o$. Instead, it lights the lamps along the inverted orbit $o.Y_n^{-1}$, which may have a very different distribution (see Section \ref{s.speed in pw}).

 We give a sketch of the proof of Theorem \ref{t:main speed} below. The rest of the paper is built as follows: In Section \ref{s.inverted}, we analyze the distribution of the size of the inverted orbit. In Section \ref{s.entropy and speed} we give a precise version of the well-known inequality connecting random walk entropy and speed based on the bounds by Varopoulos and Carne. In Section \ref{s.speed in pw}, we define permutation wreath products and develop our bounds for speed and entropy in general permutation wreath products. In Section \ref{s:assembly}, we introduce and analyze the assembly line, a birth-and-death Markov chain which will be the basis of our construction; these are simpler to understand than the underlying groups.  In Section \ref{s:p mother groups} we introduce the underlying groups, prove specific estimates for the orbits in these groups and then apply the bounds of the previous sections to deduce the three main theorems. Finally, we close the paper with some open questions.

\subsection{Overview of the proof of Theorem \ref{t:main speed}}
Let us describe an overview of the construction used to attain groups with prescribed behavior of the speed as in Theorem \ref{t:main speed}.
Let $G$ be some group acting on a rooted set $(S,o)$ to be described later. Let $\Gamma= \mathbb Z \wr_S G$ be the permutation wreath product, and let $X_n=(L_n,Y_n)$ be a switch-walk-switch random walk on $\Gamma$, where $L_n$ denotes the lamp configuration at time $n$ and $Y_n$ denotes the projection of $X_n$ to the base group $G$. Let $Q_n$ be the inverted orbit, that is $Q_n=\{o,o.Y_1^{-1},\ldots,o.Y_n^{-1}\}$ and let $q_n$ denote the expected size of $Q_n$. We will divide the construction and the analysis into two main parts.
First, we will analyze $\E|X_n|$ in terms of $q_n$, and show that $\E|X_n|\asymp \sqrt{nq_n}$ and then we will construct a group and an action so that $\sqrt{n q_n}\asymp f(n)$.

To get a lower bound on $\E|X_n|$ we argue that to get from $X_n$ to the identity, one must switch off all the lights. That is, $\E|X_n|\geq \sum_{s\in\supp L_n}|L_n(s)|$.
For the sake of this overview, we will work with expectations and pretend that the walk on $S$ is evenly spread on its range. That is we pretend that $|\supp L_n|\asymp q_n$ and that each point in the inverted orbit is visited about $n/q_n$ times. As the lamps we have are $\Z$-valued, in each position in $\supp L_n$ we will reach a distance of about $\sqrt{n/q_n}$. Thus
$$\E|X_n|\geq \sum_{s\in\supp L_n}|L_n(s)| \asymp q_n \sqrt{n/q_n} \asymp \sqrt{nq_n}.$$
This argument is made precise in Theorem \ref{t:speed_lb} of Section \ref{s.speed in pw}. 

To get the upper bound, it is not clear a priori that counting lamp switches is enough (for instance, one needs to move between the different points where the lamps are lit).
Unfortunately, there are not many tools to get an upper bound on the speed of a random walk. We bound the speed by looking at the entropy of the random walk (see Section \ref{s.entropy and speed}) and using a well-known inequality relating speed and entropy (Proposition \ref{p:VC}) which roughly says that $\E|X_n| \preceq \sqrt{nH(X_n)}$. To bound the entropy $H(X_n)$ we divide the information needed to describe $X_n$ into three parts: $Y_n$, the support of $L_n$ and the value of $L_n(s)$ in each point of the support, thus we get
$$H(X_n)=H(L_n,Y_n)\leq H(L_n)+H(Y_n) \leq H(L_n \,| \, \supp L_n) + H(\supp L_n) + H(Y_n)$$

To get an upper bound on $H(L_n \,| \, \supp L_n)$ note that $|L_n(s)|\leq 2n$ for each $s\in \supp L_n$, so it only takes order $\log n$ bits to describe each entry of $L_n$, and since $\E|\supp L_n| \asymp q_n$ we get $H(L_n \,| \, \supp L_n)\leq q_n \log n$.
Controlling $H(\supp L_n)$ and $H(Y_n)$ demands precise knowledge on the group $G$ and its action. This is done in Section \ref{s:p mother groups}, but for the sake of this overview we will assume that they are both small enough so that $H(X_n)\asymp H(L_n \,| \, \supp L_n)$ (this is the condition in Theorem \ref{t:maingeneral} saying the action is significant enough).
So overall we have $H(X_n) \preceq q_n\log n$ which using the speed-entropy relation translates to $$\E|X_n|\leq \sqrt{nq_n\log n},$$ which is the same as the lower bound we got up to  a $\sqrt{\log n}$ factor. See Theorem \ref{t:Ent_upper_general} of Section \ref{s.speed in pw} for more details. Some further efforts, done in Lemma \ref{l:Gady} and Theorem \ref{t:upper_tight} lets us get rid of the $\sqrt{\log n}$ factor.\newline
It now remains to construct a group and an action for which one can control $|Q_n|$ sufficiently well. As mentioned in the introduction, and formally done in Section \ref{s.inverted}, this can be reduced to understanding return probabilities of the random walk $o.Y_n$ on the Schreier graph. The action itself is constructed and analyzed in Section \ref{s:assembly}. The set $S$ is the set of all eventually 0 infinite sequences $\{a_i\}_i\geq 1$ with $0\leq a_i\leq m_i-1$ for some bounded sequence $m=\{m_i\}_{i\geq 1}$. 

A central observation in this paper is that the analysis of the random walk on this graph boils down to the analysis of a birth and death chains.  The control over the return probabilities and thus $q_n$ and $\E|X_n|$ comes through the choice of the sequence $m$. The groups themselves are defined in Section \ref{s:p mother groups} where the above mentioned estimates for $H(\supp L_n)$ and $H(Y_n)$ are proved as well.

\section{Random walks and inverted orbits}\label{s.inverted}

Let $T_i,i\in \mathbb Z$ be a positive integer-valued i.i.d.\ sequence. The two sided-random walk with increments $T_i$ has value $T_1+\ldots + T_i$ for $i\ge 1$ and $-T_{-1}-\ldots -T_i$ for $i<0$. Its image as $i$ ranges over $\mathbb Z$ is a random set called the two-sided renewal process. Its gap distribution is the law of $T_i$.

\begin{proposition}\label{p:rt_renewal}
Let $Y_i=G_1G_2\ldots G_i$ be a random walk on a group $G$, acting on a rooted set $(S,o)$. Let
\[
V_t=\{i\ge 0\; :\; o.Y_i^{-1}= o.Y_t^{-1}\},
\]
namely the times at which the inverted orbit visits the same point as at time $t$.  Then $V_t$ has the same distribution as $(t+\rho)\cap [0,\infty)$ where $\rho$ is the two-sided renewal process whose gap distribution is the law of the first return time of the Markov chain $o.Y_n$.
\end{proposition}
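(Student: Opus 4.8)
The plan is to fix $t\ge 0$, split $V_t$ into the times $i\le t$ and the times $i>t$, recognize each of the two parts as the set of return times to $o$ of a copy of the Schreier-graph walk $n\mapsto o.Y_n$, and then observe that the two parts are independent; glued at the common epoch $i=t$, these two one-sided renewal processes form exactly a two-sided renewal process centered at $t$. Two routine facts are used: the group identity $o.g=o\iff o.g^{-1}=o$, and the fact that for a Markov chain started at $o$ the set of times it visits $o$ is a renewal process whose gap distribution is its first return time (defective if the chain is transient).

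For the forward part, let $i>t$. Then $o.Y_i^{-1}=o.Y_t^{-1}$ is equivalent to $o.(Y_i^{-1}Y_t)=o$, and here $Y_i^{-1}Y_t=(G_{t+1}G_{t+2}\cdots G_i)^{-1}$, so the condition becomes $o.(G_{t+1}\cdots G_i)=o$. The process $C_k:=o.(G_{t+1}G_{t+2}\cdots G_{t+k})$ obeys $C_{k+1}=C_k.G_{t+k+1}$ with i.i.d.\ increments having the step distribution, so $(C_k)_{k\ge0}$ has the same law as $o.Y_k$; hence $V_t\cap(t,\infty)=t+\{k\ge1:C_k=o\}=t+(R'\setminus\{0\})$, where $R'$ denotes a one-sided renewal process started at $0$ with gap distribution the first return time of $o.Y_n$.

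For the backward part, let $0\le i\le t$. Here $Y_i^{-1}Y_t=G_{i+1}G_{i+2}\cdots G_t$, so $o.Y_i^{-1}=o.Y_t^{-1}$ is equivalent to $o.(G_{i+1}\cdots G_t)=o$, and by the inverse identity to $o.(G_t^{-1}G_{t-1}^{-1}\cdots G_{i+1}^{-1})=o$. Set $B_s:=o.(G_t^{-1}G_{t-1}^{-1}\cdots G_{t-s+1}^{-1})$ for $0\le s\le t$; one checks $B_{s+1}=B_s.G_{t-s}^{-1}$, so $(B_s)$ is a Markov chain driven by the i.i.d.\ increments $G_t^{-1},G_{t-1}^{-1},\dots,G_1^{-1}$. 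Since the step distribution is symmetric, this reversed-and-inverted sequence has the same joint law as $G_1,\dots,G_t$, so $(B_s)_{0\le s\le t}$ is distributed as the first $t$ steps of $o.Y_n$. As $i\in V_t\iff B_{t-i}=o$, we obtain $V_t\cap[0,t]=t-(R\cap[0,t])$ in distribution, where $R$ is a one-sided renewal process with the same gap law as $R'$.

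To finish, $V_t\cap[0,t]$ is a function of $(G_1,\dots,G_t)$ and $V_t\cap(t,\infty)$ is a function of $(G_{t+1},G_{t+2},\dots)$, so $R$ and $R'$ are independent, and $V_t=[(t-R)\cup(t+R')]\cap[0,\infty)$ in distribution (the missing point $t+0$ of the second set is supplied by the first). Since $R$ and $R'$ are independent one-sided renewal processes anchored at $0$ with gap distribution the first return time of $o.Y_n$, the set $(-R)\cup R'$ is precisely a two-sided renewal process $\rho$ with that gap distribution, so $V_t=(t+\rho)\cap[0,\infty)$; the transient case is identical, with $\rho$ almost surely finite. I expect the one genuinely delicate point to be the algebraic identification in the backward part: read in decreasing $i$, the inverted orbit becomes a bona fide Markov chain only after applying $o.g=o\iff o.g^{-1}=o$ and reversing-and-inverting the increments, which is where the symmetry hypothesis enters, and one must check that the shared renewal epoch $i=t$ is counted exactly once.
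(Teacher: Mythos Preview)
Your proof is correct and follows the same overall strategy as the paper: split $V_t$ at time $t$, identify the forward and backward pieces as return-time sets of two independent copies of the Schreier-graph walk, and glue them into a two-sided renewal process.

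The one substantive difference is in how you handle the backward piece. You invoke the standing symmetry hypothesis to say that $(G_t^{-1},\dots,G_1^{-1})$ has the same joint law as $(G_1,\dots,G_t)$, so $(B_s)$ is literally a copy of $o.Y_s$. The paper instead proves, without using symmetry, that the \emph{first return time} of the inverse-step walk coincides in law with that of the forward walk: given $o.G_1^{-1}\cdots G_k^{-1}=o$, the intermediate non-return conditions $o.G_1^{-1}\cdots G_\ell^{-1}\neq o$ are rewritten as $o\neq o.G_k\cdots G_{\ell+1}$, which is exactly first-return-at-time-$k$ for the walk with increments $G_k,\dots,G_1$; exchangeability of the i.i.d.\ steps finishes it. Your route is shorter and perfectly adequate under the paper's standing symmetry assumption; the paper's argument is a bit more delicate but shows the proposition holds for arbitrary (not necessarily symmetric) i.i.d.\ step distributions.
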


\begin{proof}
Fix n. Let $R\subset [1,t]$ and $S\subset [1, n-t]$ be sets of integer times, and consider the event $V_t\cap [0,n]=(t-R)\cup (t+S)$. We can write this event as the intersection of the events that
 \[
\{1 \le r\le t\;:\;o.Y_{t-r}^{-1}=o.Y_{t}^{-1}\}=R
\]
and
\[
\{1 \le s\le n-t\;:\;o.Y_{t+s}^{-1}=o.Y_{t}^{-1}\}=S.
\]
The equality in the first event can be written as
\[
o.G_{t-r}^{-1}\cdots G_1^{-1}=o.G_{t}^{-1}\cdots G_{1}^{-1}.
\]
Multiplying from the right by $G_1\cdots G_{t-r}$ we get
\[
o=o.G_{t}^{-1}\cdots G_{t-r}^{-1}.
\]
Similarly, the equality in the second event is equivalent to
\[
o=o.G_{t+1}\cdots G_{t+s}
\]
Clearly, these specify the return times of two independent Markov chains. Such return times are renewal processes, and the gap distribution is given by the time of the first return. To verify our claim, all we have to check is that the first return time $T$ for the walk with the inverse steps has the same distribution as that for the random walk with normal steps. Note that $T=k$ is equivalent to \begin{equation}\label{e:firstreturn}
o.G_1^{-1}\cdots G_{k}^{-1}=o,
\end{equation}
and $$o.G_1^{-1}\ldots G_\ell^{-1}\not= o, \mbox{ for all } \ell<k.$$ If the first equality holds, then the second one is equivalent to
$$o.G_{\ell+1}^{-1}\ldots G_k^{-1}\not= o, \mbox{ for all } \ell<k
$$or, multiplying over, to
$$o\not=o.G_{k}\ldots G_{\ell+1}, \mbox{ for all } \ell<k.$$
Now, together with \eqref{e:firstreturn}, this specifies the first return time for the walk with group increments in the reverse order: $G_k,\ldots , G_1$.
\end{proof}

Let $Q_n$ be the (random) occupation measure of the inverted orbit of the first $n$ steps.
More precisely, for $s\in S$ we have
$Q_n(s)=\sum_{i=0}^n {\one} (o.Y_i^{-1}=s)$, the number of
visits of the inverted orbit to $s$. We will often think of $n$ as fixed and call $Q_n$ simply the inverted orbit.
The support of $Q_n$ is will be called the range. Let $|Q_n|$ denote the
size of the range.

\begin{corollary}\label{c:Qn sum return} Let $T$ denote the first return time for the random walk
$o.Y_n$. Then we have
$$
\ev |Q_n| = \sum_{i=0}^n \pr (T>i) = \ev [T\wedge (n+1)].
$$
\end{corollary}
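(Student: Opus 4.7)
The natural plan is to write the range $|Q_n|$ as a sum of indicators of first visits, then apply Proposition~\ref{p:rt_renewal} point-by-point. Concretely, for each $i\in[0,n]$, let $A_i$ be the event that $i$ is the smallest element of the set $V_i\cap[0,n]$, i.e.\ that time $i$ is the first time in $[0,n]$ that the inverted orbit visits $o.Y_i^{-1}$. Then each element of the range is counted exactly once by $\sum_{i=0}^n \one(A_i)$, so
$$
\ev|Q_n| = \sum_{i=0}^n \pr(A_i).
$$

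Next I would compute $\pr(A_i)$ using the proposition. By Proposition~\ref{p:rt_renewal}, $V_i\cap[0,n]$ is distributed as $\bigl((i+\rho)\cap[0,n]\bigr)$, where $\rho$ is a two-sided renewal process with gap distribution $\mathcal{L}(T)$, and $0\in\rho$ by construction. The event $A_i$ translates to $\rho\cap[-i,-1]=\emptyset$. Since a two-sided renewal process conditioned to contain $0$ decomposes into two independent one-sided renewal processes started at $0$ (one going right, one going left, each with gap distribution $T$), the probability that the first renewal point strictly to the left of $0$ lies at or before $-i-1$ is exactly $\pr(T>i)$. Hence $\pr(A_i)=\pr(T>i)$.

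Summing gives $\ev|Q_n|=\sum_{i=0}^n\pr(T>i)$. For the last equality I would simply invoke the standard tail-sum identity: $\sum_{i=0}^n \one(T>i) = T\wedge(n+1)$ pointwise, and taking expectations yields $\sum_{i=0}^n \pr(T>i) = \ev[T\wedge(n+1)]$.

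The only mildly delicate point is the renewal-process interpretation of $\pr(A_i)$: one must remember that the two-sided renewal process in Proposition~\ref{p:rt_renewal} contains $0$ as a renewal and that the left gap from $0$ has the same distribution as the first return time $T$. Everything else is a direct bookkeeping step, so I do not expect any real obstacle.
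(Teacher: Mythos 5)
Your proof is correct and essentially the same as the paper's: the paper decomposes $|Q_n|$ via last-visit indicators ($B_i$ = time $i$ is the last visit to $o.Y_i^{-1}$), getting $\pr(B_i)=\pr(T>n-i)$ and then changing variables, whereas you use first-visit indicators and get $\pr(A_i)=\pr(T>i)$ directly. The two decompositions are mirror images of one another under the same renewal-process proposition, and both close with the identical tail-sum identity.
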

\begin{proof}
Let $B_i$ the event that time $i$ is the last visit to
$o.Y_i$ up to time $n$. Note that by Proposition \ref{p:rt_renewal}
$\pr(B_i)=\pr(T>{n-i})$. We have
$$\ev |Q_n|= \ev \sum_{i=0}^n \one_{B_i}=\sum_{i=0}^n \pr(B_i)
$$
Changing variables with $j=n-i$ we get
$$
\ev |Q_n|=\sum_{j=0}^n \pr(T>j)=\sum_{j=0}^\infty \pr(T\wedge (n+1)>j)
$$
so the last equality follows from the tail sum formula for
expectation.
\end{proof}
\begin{corollary} \label{c:Qlower} Let $T$ denote the first return time for the random walk
$o.Y_n$. Then for $0\le t \le n$ we have
$$
\pr( Q_n(o.Y^{-1}_t)\le k) \ge 1-2\pr(T\le n)^{k/2}.
$$
\end{corollary}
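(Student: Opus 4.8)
The plan is to use Proposition~\ref{p:rt_renewal} to rewrite the visit count $Q_n(o.Y_t^{-1})$ as a count of renewal points in an interval, and then to bound that count by a crude union bound over i.i.d.\ copies of the first return time $T$. Observe first that $Q_n(o.Y_t^{-1})$ is precisely the number of $i\in\{0,\dots,n\}$ with $o.Y_i^{-1}=o.Y_t^{-1}$, i.e.\ $|V_t\cap[0,n]|$ in the notation of Proposition~\ref{p:rt_renewal}; by that proposition it is distributed as $|(t+\rho)\cap[0,n]|$, where $\rho$ is the two-sided renewal process through $0$ whose gap law is that of $T$. In particular $t\in t+\rho$ always, which just reflects the trivial fact $o.Y_t^{-1}=o.Y_t^{-1}$, so the count is always at least $1$.

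Next I would split $\rho=\rho^-\cup\{0\}\cup\rho^+$ into its strictly negative and strictly positive parts, writing $\rho^+=\{S_1,\,S_1+S_2,\,\dots\}$ and $\rho^-=\{-S_1',\,-(S_1'+S_2'),\,\dots\}$ with $(S_i)$ and $(S_i')$ i.i.d.\ copies of $T$ (the renewal structure seen to the right and to the left of time $t$; this is exactly the decomposition used in the proof of Proposition~\ref{p:rt_renewal}). Then
$$Q_n(o.Y_t^{-1})\eqd 1+N^++N^-,\qquad N^+=\#\{j\ge1:\ S_1+\dots+S_j\le n-t\},$$
with $N^-$ the analogous count using the $S_i'$ and threshold $t$. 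Since $0\le t\le n$, each of $N^+$ and $N^-$ is dominated by $N:=\#\{j\ge1:\ S_1+\dots+S_j\le n\}$.

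The one real estimate needed is $\pr(N\ge\ell)\le\pr(T\le n)^\ell$: the event $\{N\ge\ell\}$ equals $\{S_1+\dots+S_\ell\le n\}$, and since the $S_i$ are positive integers this forces $S_i\le n$ for every $i\le\ell$, so independence finishes it. Finally, on $\{Q_n(o.Y_t^{-1})>k\}$ we have $N^++N^-\ge k$, hence $N^+\ge k/2$ or $N^-\ge k/2$; a union bound then gives $\pr(Q_n(o.Y_t^{-1})>k)\le 2\,\pr(T\le n)^{k/2}$, and taking complements yields the corollary. I do not expect a genuine obstacle here: the only points requiring care are a faithful reading of Proposition~\ref{p:rt_renewal} (that $\rho$ contains $0$ and that the left/right renewal processes are as stated) and the cosmetic rounding in ``$\ge k/2$'' for odd $k$, which costs nothing because $\pr(T\le n)\le1$.
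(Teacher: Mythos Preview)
Your proof is correct and follows essentially the same route as the paper's own argument: invoke Proposition~\ref{p:rt_renewal} to write $Q_n(o.Y_t^{-1})$ as one plus two independent renewal counts (your $N^+$ and $N^-$, the paper's $A$ and $B$), observe that if the total exceeds $k$ then one of the two counts is at least $k/2$, and bound each tail by $\pr(T\le n)^{k/2}$ via the inclusion $\{N\ge\ell\}\subset\bigcap_{i\le\ell}\{S_i\le n\}$. The only differences are cosmetic: you make the ``$+1$'' from the point $t$ itself explicit and remark on the rounding in $k/2$, both of which the paper leaves implicit.
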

\begin{proof}
By Proposition \ref{p:rt_renewal}, we have
$$
\pr( Q_n(o.Y^{-1}_t)\le k) = \pr(A+B<k)
$$
where $A,B$ are the number of renewals up to and including time $t$
and $n-t$, respectively in two independent
renewal processes with gap distribution given by the law of $T$. This gives the lower bound
$$
\pr( Q_n(o.X_t)\le k) \ge 1-\pr(A\ge k/2) -\pr(B\ge k/2)
$$
If $A\ge k/2$, then the first $k/2$ gaps are all of size
at most $t\le n$. So, by the independence of gaps, we can bound
$$
\pr(A\ge k/2)\le \pr(T\le n)^{k/2}.
$$
The same argument for $B$ now concludes the proof.
\end{proof}

\section{Entropy and speed}\label{s.entropy and speed}
Recall that the {\bf entropy} of a finite non-negative
measure $\mu$ supported on a countable set $G$ is defined
by
\[
\Ent(\mu) = \sum_{x\in G} -\mu(x) \log\mu(x),
\]
where by convention $0\log 0=0$.
The entropy $H(X)$ of a discrete random variable $X$ is
given by the entropy of its distribution; the entropy
$H(X_1,\ldots X_n)$ of several random variables is given by
the entropy of the joint distribution of the $X_i$.
In
order to define conditional entropy for two random
variables $X,Y$, let $f(y)$ denote the entropy of
the conditional distribution of $X$ given $Y=y$. Then the
{\bf conditional entropy of $X$ given $Y$} is defined as
$H(X|Y) := \E f(Y)$.

The conditional entropy satisfies
\[
H(X,Y) = H(X|Y) + H(Y).
\]
A useful and easy fact is that among measures supported on a given
finite set, the one having maximal entropy is the uniform measure
on that set.

Let $h(n)$ be the entropy of the $n$-step random walk in the group $\Lambda$. Recall that $h$ is concave (\cite{KV}, Proposition 1.3) and hence by linear approximation we can extend it to a concave function on $\mathbb R_+$.

\begin{lemma} \label{l:concave} For every $n\ge 1$ the function $x\mapsto xh(n/x)$ is concave.
\end{lemma}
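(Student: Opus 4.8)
The plan is to derive the concavity of $g(x):=x\,h(n/x)$ on $(0,\infty)$ directly from the definition of concavity, using only that $h$ is concave on $\R_+$. This is the one-variable instance of the classical fact that the perspective transform $(x,y)\mapsto x\,h(y/x)$ of a concave function is jointly concave on $\{x>0\}$; since here the second argument is fixed equal to $n$, there is no need to set up the two-variable version, and in any case restricting a concave function to an affine line preserves concavity.

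First I would fix $x_1,x_2>0$ and $\lambda\in[0,1]$ and set $x=\lambda x_1+(1-\lambda)x_2$. The one nontrivial step is to introduce the reweighted coefficient
$$\mu=\frac{\lambda x_1}{x}\in[0,1],\qquad 1-\mu=\frac{(1-\lambda)x_2}{x},$$
chosen precisely so that
$$\mu\cdot\frac{n}{x_1}+(1-\mu)\cdot\frac{n}{x_2}=\frac{\lambda n+(1-\lambda)n}{x}=\frac{n}{x}.$$
Then I would apply concavity of $h$ at the points $n/x_1$ and $n/x_2$ with weights $\mu$ and $1-\mu$ to get $h(n/x)\ge \mu\,h(n/x_1)+(1-\mu)\,h(n/x_2)$, and finally multiply through by $x>0$, which turns the right-hand side into $\lambda x_1 h(n/x_1)+(1-\lambda)x_2 h(n/x_2)=\lambda g(x_1)+(1-\lambda)g(x_2)$. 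This gives $g(\lambda x_1+(1-\lambda)x_2)\ge \lambda g(x_1)+(1-\lambda)g(x_2)$, which is the claimed concavity.

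There is no real obstacle here beyond bookkeeping: every argument $n/x_i$ and $n/x$ lies in $\R_+$, where $h$ is concave by hypothesis (after the stated linear-interpolation extension), and $x>0$ so multiplying preserves the direction of the inequality. The only \emph{idea} is recognizing the perspective structure, i.e.\ guessing the weight $\mu$; once it is written down the verification is a two-line computation. If one preferred, continuity of $g$ (inherited from continuity of the concave function $h$ on an interval) would allow checking only the midpoint case $\lambda=1/2$, but the argument above already yields the full inequality for all $\lambda$.
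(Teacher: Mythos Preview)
Your proof is correct and uses the same perspective-transform reweighting as the paper: the paper's proof is exactly your argument specialized to $\lambda=1/2$ (so that your $\mu$ becomes $x_1/(x_1+x_2)$), relying implicitly on continuity to pass from midpoint concavity to full concavity. You carry out the general-$\lambda$ version directly, which is marginally cleaner but not a different idea.
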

\begin{proof}
By the concavity of entropy, we have
$$
\frac{x}{x+y} h(n/x) +  \frac{y}{x+y} h(n/y) \le h\left( \frac{x}{x+y} n/x +  \frac{y}{x+y} n/y\right) = h\left(\frac{2n}{x+y}\right)
$$
equivalently,
$$
\frac{xh(n/x) +  yh(n/y)}{2} \le \frac{x+y}{2}h\left(\frac{2n}{x+y}\right)
$$
which is the claimed concavity.
\end{proof}

The following is a precise version of the well-known relationship between entropy and expected distance (\cite{ErschlerZ2}) that follows from the bounds of \cite{varoplongrange} and \cite{carne}. The proof we present below is due to Yuval Peres. 
\begin{proposition}\label{p:VC}
Consider a random walk $X_n$ on a graph started at a vertex $o$, and let $|\cdot|$
denote graph distance from $o$.  Then we have
$$
\ev |X_n| \leq \sqrt{2n\left(H(X_n) +  \log 2\sqrt{\mbox{$\max_v \frac{\deg v}{\deg o}$}}\right)}.
$$
\end{proposition}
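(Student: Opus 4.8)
The plan is to derive the inequality by feeding the Carne--Varopoulos transition-probability estimate into Jensen's inequality for the concave function $t\mapsto\sqrt t$. Write $p_n(y):=\P(X_n=y)$, so that $H(X_n)=\sum_y p_n(y)\log(1/p_n(y))$ and $\ev|X_n|=\sum_y p_n(y)\,|y|$. The point is that a uniform exponential upper bound on $p_n(y)$ in terms of $|y|$ translates, after taking logarithms, into an upper bound on $|y|$ in terms of $\log(1/p_n(y))$, and averaging this against the distribution of $X_n$ is exactly a Jensen step.

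Concretely, I would first invoke the Carne--Varopoulos bound: since the walk is reversible (on a graph, with respect to the degree measure) and its steps go along edges, for every vertex $y$ one has
$$
p_n(y)\ \le\ 2\sqrt{\tfrac{\deg y}{\deg o}}\;e^{-|y|^2/(2n)}\ \le\ 2\sqrt{\textstyle\max_v \tfrac{\deg v}{\deg o}}\;e^{-|y|^2/(2n)}.
$$
Setting $c:=\log\!\bigl(2\sqrt{\max_v \deg v/\deg o}\bigr)$, taking logarithms and rearranging gives, for every $y$ with $p_n(y)>0$,
$$
|y|^2\ \le\ 2n\bigl(c+\log(1/p_n(y))\bigr),\qquad\text{hence}\qquad |y|\ \le\ \sqrt{2n\bigl(c+\log(1/p_n(y))\bigr)}.
$$
Vertices with $p_n(y)=0$ contribute nothing to $\ev|X_n|$ and may be ignored.

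The second step is to average. Using $\sum_y p_n(y)=1$ together with the concavity of the square root (Jensen),
$$
\ev|X_n|\ =\ \sum_y p_n(y)\,|y|\ \le\ \sum_y p_n(y)\sqrt{2n\bigl(c+\log(1/p_n(y))\bigr)}\ \le\ \sqrt{\,2n\Bigl(c+\sum_y p_n(y)\log(1/p_n(y))\Bigr)}\ =\ \sqrt{2n\bigl(c+H(X_n)\bigr)},
$$
which is precisely the asserted inequality once $c$ is written out.

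I do not expect a genuine obstacle here: the proposition is essentially the pairing of two classical ingredients. The only point needing a word of care is the scope of Carne--Varopoulos, namely that it applies because the chain is reversible and its transitions lie along edges of the graph whose metric $|\cdot|$ appears in the statement (automatic for the simple random walk, and for the finitely supported symmetric walks considered later once the graph is taken to be generated by the support of the step distribution); everything after that is routine manipulation of logarithms and a single use of Jensen's inequality.
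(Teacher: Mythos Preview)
Your proof is correct and is essentially the paper's argument. Both start from the Carne--Varopoulos inequality $\log(1/p_n(y))\ge |y|^2/(2n)-c$ and then apply a single Jensen step; the only cosmetic difference is that the paper averages first to obtain $H(X_n)\ge \E|X_n|^2/(2n)-c$ and then uses $(\E|X_n|)^2\le \E|X_n|^2$, whereas you take square roots pointwise and apply Jensen for $\sqrt{\cdot}$---the same inequality in a different order.
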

\begin{proof}
\begin{equation}\label{e:vc1}
H(X_n) =  -\sum_{v\in V}\pr (X_n=v)\log \pr (X_n=v).
\end{equation}
By the bounds of \cite{varoplongrange} and \cite{carne}, with $\eta=\max_v \frac{\deg v}{\deg o}$ and $\kappa=\log (2\sqrt\eta)$ we have
$$
\log \pr(X_n=v)\le \kappa-|v|^2/2n.
$$
Substituting this into \eqref{e:vc1} and using the fact that $\sum_{v\in V}\pr(X_n=v)=1$ we get
$$
H(X_n) \ge \sum_{v\in V}\pr(X_n=v)\frac{|v|^2}{2n} -\kappa = \frac{\E(|X_n|^2)}{2n} - \kappa \geq \frac{\E^2(|X_n|)}{2n} -\kappa.
$$

The claim follows by rearranging the terms and taking square roots.
\end{proof}

\section{Speed in permutation wreath products}\label{s.speed in pw}
We consider the following setup. Let $\Gamma$ be a finitely generated countable infinite group acting on a set $S$. We single out an element $o\in S$ and call it the root.
Let $\mu$ be a finitely supported symmetric measure on $\Gamma$. Let $\Lambda$ be a finitely generated countable (possibly finite) group. The \textbf{permutation wreath product} $\Lambda \wr_S \Gamma$ is the semidirect product of $\Lambda^S$ with $\Gamma$ acting on it by permuting the coordinates. The multiplication rule, for $\ell,\ell'\in (\Lambda^S)$ and $g,g'\in \Gamma$ is
$$
(\ell,g)(\ell',g')=(\ell\ell'^{g^{-1}},gg')
$$
where $\ell'^{g^{-1}}$ is defined by $\ell'^{g^{-1}}(s)= \ell'(s.g)$.

A \textbf{switch} is a random element of $\Lambda \wr_S \Gamma$ of the form $(\bar L,id_\Gamma)$, where
$$
\bar L(s)=\begin{cases}id_\Lambda \text{ if } s\not=o, \\
L \text{ if } s=o,
\end{cases}
$$
where $L$ is a random elemet of $\Lambda$ chosen from a fixed symmetric finitely-supported measure.

We consider the random walk
$$
X_n= \prod_{i=1}^n \bar L_i G_i \bar L'_i
$$
(called the \textbf{switch-walk-switch} random walk) on the permutation wreath product. Here the $G_i$ are independent choices from the measure on $\Gamma$, and the $L_i,L_i'
$ are independent choices from the measure on $\Lambda$. We have $X_i=(\cdot, Y_i)$ where $Y_i=G_1\cdots G_i$.

Keeping with the tradition for ordinary wreath products, we call the group $\Lambda \wr_S \Gamma$ the Lamplighter group, and the above walk the lamplighter walk.

Unlike the usual lamplighter group, where there is a walker on the (Cayley) graph who keeps switching lights along its path, this walk cannot be interpreted as a lamplighter moving along the Schreier graph of the action $G$ on $S$. This is because the switches happen at locations $o$, $o.G_1^{-1}$, $o.G_2^{-1}G_1^{-1}$, $o.G_3^{-1}G_2^{-1}G_1^{-1}$, a sequence that is not necessarily Markovian (or even connected in the Schreier graph). Note that $o$, $o.G_1^{-1}$, $o.G_1^{-1}G_2^{-1}$, $o.G_1^{-1}G_2^{-1}G_3^{-1}$ would be a Markovian random walk on the Schreier graph, but our case is different. Following \cite{BartholdiErschler} we call the sequence $o$, $o.G_1^{-1}$, $o.G_2^{-1}G_1^{-1}$, $o.G_3^{-1}G_2^{-1}G_1^{-1},\ldots$ the inverted orbit of $o$ under the walk $Y_n$ (or alternatively of the walk $X_n$ as they have the same action on $S$.) In particular, we get the useful observation, which will be exploited later, that the inverted orbit of $X_n$ depends only on $Y_n$ and not on the lamp group or the switch steps chosen.

A symmetric finitely-supported measure on a group defines a Cayley graph. Let $|g|$ denote the corresponding graph distance of $g$ from the identity.

With $L_i$ random walk increments on $\Lambda$ as above, define  $$\underline \lambda(t)=
\inf_{n\ge t}\ev |L_1\cdots L_n|, \qquad \overline\lambda(t)=
\max_{n\le t}\ev |L_1\cdots L_n|.$$
A simple computation checks that both $\underline \lambda$ and $\overline \lambda$ are subadditive: $\lambda(t+s)\le \lambda(t)+\lambda(s)$.
Also, unlike the ordinary
expected distance, both $\underline \lambda$ and $\overline \lambda$ are nondecreasing (by definition). Lemma 4.1 in \cite{LeePeres} implies that \begin{equation}\label{e:monspeed}
\frac{1}{2}(\bar \lambda(n)-1) \le \ev |L_1\cdots L_n| \le 2 \underline \lambda(n)-1
\end{equation}

The following theorem connects speed on $\Lambda$ to that on the permutation wreath product. The main feature is that it only uses the tail of the first return time $T$ of the random walk on the Schreier graph of $(S, \Gamma)$. We emphasize that $T$ relates to the ordinary random walk.

\begin{theorem}\label{t:speed_lb} The random walk $X_n$ on the permutation wreath product satisfies for $n\ge 1$
$$
\ev |X_n| \ge
\frac{np\underline \lambda(1/p)}{16}.
$$
for every $p\le \pr(T>n)$.
\end{theorem}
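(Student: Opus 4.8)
The plan is to run the "switch off all the lamps" lower bound rigorously. To return $X_n$ to the identity one must in particular turn off every lamp that is lit, so $|X_n| \ge \sum_{s} |L_n(s)|$, where $L_n(s)$ is the accumulated $\Lambda$-value at site $s$. The site $s$ receives contributions from the switch steps performed at the times $i$ when the inverted orbit visits $s$; the number of such visits is $Q_n(s)$, and conditionally on the trajectory $Y_n$ the value $L_n(s)$ is distributed as a product of roughly $2Q_n(s)$ i.i.d.\ increments from the lamp measure on $\Lambda$ (the two switches per step). Hence $\ev[\,|L_n(s)| \mid Y_n\,]$ is, up to the displacement-comparison \eqref{e:monspeed}, comparable to $\underline\lambda(Q_n(s))$, and summing over the range,
$$
\ev|X_n| \;\gtrsim\; \ev \sum_{s\in \operatorname{range}(Q_n)} \underline\lambda(Q_n(s)).
$$

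Next I would lower-bound this sum. Since $\sum_s Q_n(s) = n+1$ and $\underline\lambda$ is nondecreasing and subadditive, the sum $\sum_s \underline\lambda(Q_n(s))$ is, for a fixed total mass $n$, minimized (up to constants) when the mass is spread as evenly as possible; more usefully, concavity-type reasoning gives $\sum_s \underline\lambda(Q_n(s)) \gtrsim |Q_n| \cdot \underline\lambda\!\big((n+1)/|Q_n|\big)$ whenever no single site hogs a constant fraction of the mass. So the two quantities to control are (i) a lower bound on $|Q_n|$ and (ii) an upper bound on the maximal local time $\max_s Q_n(s)$, so that the "evenly spread" heuristic of the overview is justified.

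For (i), Corollary~\ref{c:Qn sum return} gives $\ev|Q_n| = \ev[T\wedge(n+1)] \ge (n+1)\pr(T>n) \ge (n+1)p$; to turn this into a high-probability statement I would also need a lower-tail bound on $|Q_n|$, e.g.\ from Proposition~\ref{p:rt_renewal} and a second-moment or renewal argument, to say $|Q_n| \gtrsim np$ with probability bounded below. For (ii), Corollary~\ref{c:Qlower} is exactly the tool: it bounds $\pr(Q_n(o.Y_t^{-1}) \le k)$ from below by $1 - 2\pr(T\le n)^{k/2}$, so with $p\le\pr(T>n)$ one gets that $Q_n(o.Y_t^{-1})$ is of order $\log(1/\ldots)$ — small compared to $1/p$ — except on an event of small probability; a union bound over $t$ (or a more careful argument using that the number of distinct sites is $|Q_n|$, not $n$) controls $\max_s Q_n(s)$. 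Combining: on a good event of probability $\gtrsim 1$, $|Q_n|\asymp np$ and every local time is $\lesssim$ a constant times $\log$, whence $(n+1)/|Q_n| \gtrsim 1/p$ while the maximal local time is negligible, so $\sum_s \underline\lambda(Q_n(s)) \gtrsim np\,\underline\lambda(1/p)$. Tracking constants and the factors from \eqref{e:monspeed} and from conditioning should yield the claimed $np\,\underline\lambda(1/p)/16$.

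The main obstacle I anticipate is controlling the right-hand side $\sum_s \underline\lambda(Q_n(s))$ from below in a way that is robust to the actual (non-i.i.d., heavy-tailed) distribution of the local times $Q_n(s)$: one cannot simply pretend the walk is evenly spread, and the crude inequality $\underline\lambda \ge 1$ only gives $\ev|X_n| \gtrsim \ev|Q_n| \asymp np$, which is weaker than $np\,\underline\lambda(1/p)$ when $\Lambda$ has superlinear-in-$\log$ speed — wait, it is weaker precisely when $\underline\lambda(1/p)\gg 1$, i.e.\ always the interesting case. So the real content is the interplay between the lower bound on $|Q_n|$ and the upper bound on its sup-norm, delivered by Corollaries~\ref{c:Qn sum return} and~\ref{c:Qlower}; everything else (the lamp-switching lower bound, \eqref{e:monspeed}, and assembling constants) is routine.
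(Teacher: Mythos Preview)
Your overall strategy --- lower-bound $|X_n|$ by the cost of resetting lamps, then estimate $\sum_s \underline\lambda(Q_n(s))$ from below --- is the same as the paper's. The gap is in the second step. Your claimed inequality $\sum_s \underline\lambda(Q_n(s)) \gtrsim |Q_n|\,\underline\lambda\big((n+1)/|Q_n|\big)$ has the concavity backwards: since $\underline\lambda$ is subadditive, merging terms \emph{decreases} the sum, so with only the constraints $\sum_s Q_n(s)=n+1$ and ``$|Q_n|$ nonzero terms'' the minimum is achieved by putting almost all mass at one site, not by spreading evenly; for $\underline\lambda(x)=\sqrt{x}$ and $|Q_n|\asymp np$ this minimum is of order $np+\sqrt{n}$, which can be much smaller than $n\sqrt{p}$. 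Knowing $|Q_n|$ alone is therefore not the right input. Your plan (ii), capping $\max_s Q_n(s)$ via Corollary~\ref{c:Qlower} and a union bound over the $n$ times, would need $k$ of order $(\log n)/p$ rather than $1/p$ to beat the union, and the resulting bound loses a $\log n$ factor compared to the statement. The side project (i), a lower-tail bound on $|Q_n|$, is both unnecessary and not supplied.

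The paper's resolution is a single clean trick that you are circling but not landing on. Fix $k$, call a time $t\in\{0,\ldots,n\}$ \emph{thin} if $Q_n(o.Y_t^{-1})\le k$, and let $N$ be the number of thin times. Then $N$ equals the total $Q_n$-mass carried by sites with local time $\le k$, so the constrained minimization (now with the genuine cap $x_i\le k$) together with subadditivity gives
\[
\sum_{s:\,Q_n(s)\le k}\underline\lambda(Q_n(s))\;\ge\;\frac{N}{2k}\,\underline\lambda(k/2).
\]
Crucially, Corollary~\ref{c:Qlower} is applied per time $t$ and summed by \emph{linearity of expectation} rather than a union bound: $\ev N\ge n\bigl(1-2(1-p)^{k/2}\bigr)$. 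Taking $k=2/p$ makes this at least $n/4$, and substituting yields $\ev|X_n|\ge np\,\underline\lambda(1/p)/16$ with no logarithm and no high-probability control of $|Q_n|$ required.
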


\begin{proof}
Note that the number of switches of the light at $s$ by time $n$
given by
\begin{equation}\label{e:steps}
\widetilde Q_n(s)= 2Q_n(s)-\one_{\{o\}}(s) -\one_{\{o.Y_n^{-1}\}}(s)\ge Q_n(s)
\end{equation}
 for $n\ge 1$. Sice
all these lights have to be turned off to return to the identity, we
have
$$
\ev |X_n| \ge \ev \sum_{s\in S} \underline \lambda(Q_n(s))
$$
Let $k$ be a constant whose value will be decided later on. Call an element $s\in S$ thin if  $Q_n(s)\le k$, and call a time
$0\le t\le n$ thin if $o.Y_t^{-1}$ is thin. We can bound the right hand side below as
$$
\ev \sum_{s\in S, s \text{ thin}} \underline \lambda(Q_n(s)).
$$
Let $N$ be the number of thin times. We want to bound the quantity in the expectation in terms of  $N$.
This gives the following minimization problem. Minimize $\sum \underline \lambda(x_i)$ given that
$\sum x_i=N$ and the $x_i<k$ are nonnegative integers.
Note that since $\underline \lambda$ is subadditive,
we can decrease the quantity in question without violating the constraints if we replace
two $x_i$'s that are less than $k/2$ by their sum and zero.
After repeating this procedure, we get that all but perhaps one of the nonzero $x_i$ are greater or equal than $k/2$. Let $j$ be the number of
$x_i$ at least $k/2$. Then we have $jk+k/2\ge N$, so $j\ge N/k-1/2\ge N/(2k)$.
This gives
$$
\sum \underline \lambda(x_i)\ge \frac{N}{2k}\underline \lambda(k/2)
$$
substituting this into the above, we get
\begin{equation}\label{e:evN}
\ev |X_n| \ge  \ev \frac{N}{2k}\underline \lambda(k/2)=\frac{\ev N}{2k}\underline \lambda(k/2)
\end{equation}
it remains to compute
$$
\ev N=\sum_{t=0}^n \pr(Q_n(o.Y_t^{-1})\le  k)\ge n(1-2(1-\pr(T>n))^{k/2}))\ge n(1-2(1-p)^{k/2}))
$$
by Corollary \ref{c:Qlower}. Setting $k=2/p$ gives the lower bound of $n/4$, and substituting into \eqref{e:evN} we get
$$
\ev |X_n| \ge  \frac{np\underline \lambda(1/p)}{16}
$$
as claimed.
\end{proof}

Analogous to Theorem \ref{t:speed_lb} about speed, we have an upper bound for entropy in the wreath product. Again, the relation involves the return time of the random walk in the Schreier graph, since by Corollary \ref{c:Qn sum return}
$$
\ev |Q_n| = \sum_{i=0}^n \pr (T>i) = \ev [T\wedge (n+1)].
$$

\begin{theorem}\label{t:Ent_upper_general} We have
\begin{equation}\label{e:ent_upper_general}H(X_n) \le H(Y_n) + H(\supp Q_n|\,|Q_n|) + 2 \ev |Q_n|\left(h\left(\frac{n}{\ev |Q_n|}\right) + \log (n+1) \right) .
\end{equation}
And if $\Lambda$ is finite than also
\begin{equation}\label{e:finitelamp} H(X_n) \le H(Y_n) + \ev |Q_n|\log|\Lambda| + H(\supp Q_n).
\end{equation}
and in both clauses the expression on the right is increasing as a function of $\ev |Q_n|$.
\end{theorem}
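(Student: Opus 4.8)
The plan is to apply the entropy chain rule after revealing the range $\supp Q_n$ of the inverted orbit, and to control the resulting conditional lamp entropy through the following structural fact: conditionally on the base trajectory $Y_0,Y_1,\dots,Y_n$, the lamp value $L_n(s)$ has the law of a $\widetilde Q_n(s)$-step random walk on $\Lambda$, where $\widetilde Q_n(s)$ is the number of switches performed at $s$, as in \eqref{e:steps}. Indeed, the switch increments $L_i,L_i'$ are independent of the steps $G_i$, hence remain i.i.d.\ after conditioning on the trajectory, while the $i$-th pair of switches is performed at the sites $o.Y_{i-1}^{-1}$ and $o.Y_i^{-1}$, which are functions of the trajectory; thus $L_n(s)$ is a product, in a trajectory-determined order, of exactly $\widetilde Q_n(s)$ of these i.i.d.\ increments, and the law of such a product does not depend on the order and equals that of the corresponding random walk. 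Since $\widetilde Q_n(s)$ and $\supp Q_n$ are themselves functions of the trajectory, this conditional law of $L_n(s)$ is unchanged if we condition instead only on the coarser data $\supp Q_n$ and $(\widetilde Q_n(s))_s$.

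First I would write
$$
H(X_n)=H(L_n,Y_n)\le H(L_n,Y_n,\supp Q_n)\le H(\supp Q_n)+H(Y_n)+H(L_n\mid\supp Q_n),
$$
and then $H(\supp Q_n)\le H(|Q_n|)+H(\supp Q_n\mid|Q_n|)\le\log(n+1)+H(\supp Q_n\mid|Q_n|)$, using $1\le|Q_n|\le n+1$. When $\Lambda$ is finite this already yields \eqref{e:finitelamp}, since given $\supp Q_n=R$ the configuration $L_n$ is an element of $\Lambda^R$, so $H(L_n\mid\supp Q_n)\le\ev|Q_n|\log|\Lambda|$. For the general bound I would reveal in addition the switch counts,
$$
H(L_n\mid\supp Q_n)\le H\left((\widetilde Q_n(s))_s \mid \supp Q_n\right)+H\left(L_n \mid (\widetilde Q_n(s))_s,\supp Q_n\right).
$$
The first term is at most $\ev|Q_n|\log(2n+1)$ by subadditivity of entropy, since the $\widetilde Q_n(s)$ are nonnegative integers summing to $2n$ and so each lies in $\{0,\dots,2n\}$. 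For the second term, the structural fact says the conditional law of $L_n(s)$ given $\supp Q_n$ and $(\widetilde Q_n(s'))_{s'}$ is that of a $\widetilde Q_n(s)$-step walk, so $H(L_n(s)\mid\supp Q_n,(\widetilde Q_n(s'))_{s'})\le\ev\,h(\widetilde Q_n(s))$; subadditivity over $s\in\supp Q_n$ then gives $H(L_n\mid(\widetilde Q_n(s))_s,\supp Q_n)\le\ev\sum_s h(\widetilde Q_n(s))$.

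It remains to run $\ev\sum_s h(\widetilde Q_n(s))$ through the two available concavity facts. For a fixed trajectory, Jensen's inequality for the concave extension of $h$, together with $\sum_s\widetilde Q_n(s)=2n$ and the subadditivity $h(2x)\le2h(x)$ (which follows from $h$ being concave with $h(0)=0$), gives
$$
\sum_{s\in\supp Q_n}h(\widetilde Q_n(s))\le|Q_n|\,h\!\left(\frac{2n}{|Q_n|}\right)\le2|Q_n|\,h\!\left(\frac{n}{|Q_n|}\right).
$$
Taking expectations and applying Lemma~\ref{l:concave} (concavity of $x\mapsto xh(n/x)$) with Jensen gives $\ev\sum_s h(\widetilde Q_n(s))\le2\ev|Q_n|\,h(n/\ev|Q_n|)$. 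Collecting the pieces and absorbing the $\log(n+1)$ corrections into the principal term (using $\ev|Q_n|\ge1$, $\log(2n+1)\le2\log(n+1)$ for $n\ge1$, and $\widetilde Q_n(s)\le2Q_n(s)\le2n$, which is where the factor $2$ enters) yields \eqref{e:ent_upper_general}. For the monotonicity assertions: the finite-lamp bound is plainly nondecreasing in $\ev|Q_n|$, and for \eqref{e:ent_upper_general} it suffices that $t\mapsto t\,h(n/t)$ be nondecreasing on $(0,\infty)$, which holds because it is concave there (Lemma~\ref{l:concave}) and bounded below by $0$.

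The step I expect to be the main obstacle is the structural fact of the first paragraph: one must unwind the multiplication rule of $\Lambda\wr_S\Gamma$ to confirm that the $i$-th pair of switches is performed exactly at $o.Y_{i-1}^{-1}$ and $o.Y_i^{-1}$, that the lamp at a site is the ordered product of precisely the increments deposited there (hence distributed as a random walk of the correct length), and, crucially, that this per-site law is retained once the trajectory is replaced by the range and the switch counts alone. Everything after that is entropy bookkeeping plus the two concavity inputs, both of which are already available.
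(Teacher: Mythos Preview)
Your argument is essentially the paper's: peel off $Y_n$ and the range by the chain rule, then bound the lamp entropy using the concavity of $h$ and Lemma~\ref{l:concave}. The one substantive difference is that you reveal the switch counts $(\widetilde Q_n(s))_s$, whereas the paper reveals the full occupation measure $Q_n$. Your structural claim (that the conditional lamp law factors as $\prod_s\mu^{*\widetilde Q_n(s)}$ once $\widetilde Q_n$ is known) is correct and in fact more carefully stated than the paper's version.

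There is, however, a genuine arithmetic gap in your final absorption. Because $\widetilde Q_n(s)\in\{0,\dots,2n\}$ you pick up $\ev|Q_n|\log(2n+1)$, whereas the paper, using $Q_n(s)\in\{1,\dots,n+1\}$, gets $\ev|Q_n|\log(n+1)$. Your claimed absorption then requires
\[
\log(n+1)+\ev|Q_n|\log(2n+1)\;\le\;2\,\ev|Q_n|\log(n+1),
\]
which is false already for $n=1$ and $\ev|Q_1|=1$ (left side $\log6$, right side $\log4$); the inequality $\log(2n+1)\le2\log(n+1)$ that you quote is not strong enough to close the gap once the extra $\log(n+1)$ from $H(|Q_n|)$ is added. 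The easy fix is to follow the paper and condition on $Q_n$ rather than on $\widetilde Q_n$: the count term becomes $H(Q_n\mid\supp Q_n)\le\ev|Q_n|\log(n+1)$, and for the lamp term one uses monotonicity and subadditivity of $h$ to write $h(\widetilde Q_n(s))\le h(2Q_n(s))\le2h(Q_n(s))$ \emph{before} applying Jensen. The two correction terms are then $\ev|Q_n|\log(n+1)+\log(n+1)\le2\ev|Q_n|\log(n+1)$, which is exactly what the stated bound needs.
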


\begin{proof}
We start with the general case.
Write $X_n=(\mathcal L_n, Y_n)$, a pair containing a lamp configuration and a walk $Y_n$ on $\Gamma$.
\begin{equation}\label{e:ent1}
H(X_n) \le H(Y_n)+H(\mathcal L_n) \le H(Y_n)  +H(\mathcal L_n|Q_n)+ H(Q_n)
\end{equation}
We first consider the entropy of $Q_n$.
\begin{eqnarray}
H(Q_n) &=& H(\supp Q_n ) +H(Q_n|\supp Q_n) \notag \\
&\le& H(\supp Q_n|\, |Q_n|) + H(|Q_n|) + \ev |Q_n|\log (n+1) \label{e:ent2}
\end{eqnarray}
the second inequality follows since $1\leq Q_n(s) \leq n+1$ for every $s\in \supp Q_n$. Further, the random variable $|Q_n|$ is between $1$ and $n+1$, so its entropy is at most $\log(n+1)$.

We now turn to $H(\mathcal L_n|Q_n)$. Note that given $Q_n$, each entry of $\mathcal L_n(s)$ is distributed as independent samples from a random walk on $\Lambda$ of length $\widetilde Q_n(s) \le 2Q_n(s)$ (see
\eqref{e:steps}), which is at most
$$
H(\mathcal L_n|Q_n) \le \ev \sum_s h(\widetilde Q_n(s))
$$
now we use the monotonicity, concavity, and subadditivity of $h$ (\cite{KV} propositions 1.1 and 1.3)  to get
$$
\sum_s h(\widetilde Q_n(s)) \le \sum_s 2h(Q_n(s)) \le 2|Q_n| h(n/|Q_n|)
$$
By Lemma \ref{l:concave}, we have that $x\mapsto xh(n/x)$ is concave, so by Jensen's inequality
$$
H(\mathcal L_n|Q_n)\le  2\ev |Q_n| h(n/\ev |Q_n|).
$$
We summarize
$$H(X_n) \le H(Y_n) + 2 \ev |Q_n|h(n/\ev |Q_n|) + H(\supp Q_n|\, |Q_n|) + \ev |Q_n| \log (n+1) + \log (n+1).
$$
Using $|Q_n|\ge 1$ the claimed inequality follows. Since the function $x\mapsto xh(n/x)$ is concave and nonnegative, it must be increasing.

If the lamp group $\Lambda$ is finite, one may replace \eqref{e:ent1} by $$ H(X_n) \le H(Y_n)+H(\mathcal L_n) \le H(Y_n)  +H(\mathcal L_n|\supp Q_n)+ H(\supp Q_n).$$ Equation \ref{e:finitelamp} follows since every word in $\Lambda$ can be described by $\log |\Lambda|$ bits.
\end{proof}

\begin{remark} \label{r:finitelampgroup}\
 Given the size $|Q_n|$ of the support of $Q_n$, the support can be described by a sequence of $|Q_n|$ elements from the ball $B_n(o)$ of radius $n$ in the Schreier graph of $(S,\Gamma)$. This gives $$H(\supp Q_n|\, |Q_n|)\le \ev |Q_n|\log |B_n(o)|.$$
  However, as we will see in Section \ref{s:assembly}, for the groups we will be interested in one can give a better bound, linear in $|Q_n|$.
\end{remark}

The following lemma estimates the size of the inverted orbit of a random walk on a Schreier graph in terms of the speed of random walk on the permutation wreath product with $\mathbb Z_2$. Together with Theorem \ref{t:upper_tight} it usually gives a better upper bound on the speed (of the random walk $X_n$ on $\Lambda \wr_S G$) than applying the Varopolous-Carne bound of Proposition \ref{p:VC} directly to the entropy bound of Theorem \ref{t:Ent_upper_general}.

\begin{lemma}\label{l:Gady}  Let $Y_n$ be a random walk on a group $G$ acting on a rooted set $(S,o)$. Let $Q_n$ denote the inverted orbit of $Y_n$. Consider the switch-walk-switch random walk $X_n'=({\mathcal L}_n',Y_n)$ on the permutation wreath product $\mathbb Z_2\wr_S G$. There exists a word in $G$ that is equivalent to $Y_n$ whose inverted orbit includes\, $\supp Q_n$, and whose expected length is at most $3\ev |X_n'|$.
\end{lemma}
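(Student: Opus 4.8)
The plan is to take for $W^*$ a shortest word over the generators of $G$ that represents $Y_n$ and whose inverted orbit contains $\supp Q_n$, and to bound its length by comparing the cost of sweeping all of $\supp Q_n$ with the cost of sweeping only the lamps that are actually lit in $\mathcal L_n'$. The leverage is that, conditionally on $Y_n$, the lit set $\supp\mathcal L_n'$ is a uniformly random ``half'' of $\supp Q_n$, so sweeping it is on average comparable to sweeping the whole orbit. To set up notation I would write, for a word $W=a_1\cdots a_k$ in the generators of $G$, $I(W)=\{\,o.(a_1\cdots a_j)^{-1}:0\le j\le k\,\}$ for its inverted orbit, and for $g\in G$ and a finite vertex set $A$ of the Schreier graph let $\kappa(g,A)$ be the least length of a word $W$ with $W=_G g$ and $I(W)\supseteq A$. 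This is finite whenever $A\subseteq\supp Q_n$, since $G_1\cdots G_n$ represents $Y_n$ with inverted orbit exactly $\supp Q_n$; thus $W^*$ exists with $|W^*|=\kappa(Y_n,\supp Q_n)$, a function of $Y_n$.

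I would then isolate three facts. First, a \emph{geodesic projection} statement: projecting a geodesic word for $(\ell,g)\in\mathbb Z_2\wr_S G$ to its $G$-coordinate (and dropping trivial letters) yields a word $W$ with $W=_G g$, $|W|\le|(\ell,g)|$, and $I(W)\supseteq\supp\ell$, because every lamp that ends up lit was toggled while the $G$-coordinate equalled some prefix value $h$ of $W$, i.e.\ at the site $o.h^{-1}\in I(W)$; in particular $\kappa(Y_n,\supp\mathcal L_n')\le|X_n'|$. Second, a \emph{detour concatenation}: if $W_A,W_B$ both represent $g$ with $I(W_A)\supseteq A$ and $I(W_B)\supseteq B$, then $W_AW_A^{-1}W_B$ (with $W_A^{-1}$ the formal inverse word) represents $g$, has length $2|W_A|+|W_B|$, and its inverted orbit contains $I(W_A)\cup I(W_B)$ — the opening $W_A$ sweeps $I(W_A)$, and after $W_AW_A^{-1}$ the $G$-coordinate is back at the identity so the closing $W_B$ sweeps $I(W_B)$; hence $\kappa(g,A\cup B)\le 2\kappa(g,A)+\kappa(g,B)$. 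Third, the \emph{exchangeability of the lit set}: writing $X_n'=\prod_{i=1}^n\bar L_iG_i\bar L_i'$ and grouping the switches around each $G$-step one gets $\mathcal L_n'(s)=\sum_{i=0}^n c_i\,\one(o.Y_i^{-1}=s)\bmod 2$ with $c_0=L_1$, $c_i=L_i'+L_{i+1}$ for $1\le i<n$, $c_n=L_n'$ (consistent with the switch count $\widetilde Q_n$ in \eqref{e:steps}); since the $L_i,L_i'$ are i.i.d.\ uniform on $\mathbb Z_2$ and each $c_i$ involves its own disjoint subset of them, the $c_i$ are i.i.d.\ uniform, so conditionally on $G_1,\dots,G_n$ the bits $(\mathcal L_n'(s))_{s\in\supp Q_n}$ are i.i.d.\ uniform, $\supp\mathcal L_n'$ is a uniformly random subset of $\supp Q_n$, and it has the same conditional law as its complement $\supp Q_n\setminus\supp\mathcal L_n'$.

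Assembling these, I would condition on $G_1,\dots,G_n$ and apply the detour fact with $A=\supp\mathcal L_n'$ and $B=\supp Q_n\setminus\supp\mathcal L_n'$, so $A\cup B=\supp Q_n$ and $|W^*|=\kappa(Y_n,\supp Q_n)\le 2\kappa(Y_n,A)+\kappa(Y_n,B)$. The left-hand side is measurable with respect to $G_1,\dots,G_n$, and $A$ and $B$ are conditionally equidistributed, so taking conditional expectations gives $|W^*|\le 3\,\ev\!\big[\kappa(Y_n,\supp\mathcal L_n')\mid G_1,\dots,G_n\big]\le 3\,\ev\!\big[|X_n'|\mid G_1,\dots,G_n\big]$ by the geodesic projection fact; averaging over $G_1,\dots,G_n$ then yields $\ev|W^*|\le 3\,\ev|X_n'|$.

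I expect the one genuinely delicate ingredient to be the exchangeability statement: it really uses that a switch is a fair coin flip on $\mathbb Z_2$ (for a general symmetric step on $\mathbb Z_2$ the complement of $\supp\mathcal L_n'$ need not be equidistributed with $\supp\mathcal L_n'$, and the constant would degrade), and it requires keeping the left/right conventions of the semidirect product straight so that the toggled lamps genuinely land on inverted-orbit sites of the projected word. The geodesic projection and detour facts, and the final averaging, are routine bookkeeping.
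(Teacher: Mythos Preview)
Your proof is correct and follows essentially the same approach as the paper: both exploit that, given the walk on $G$, the lit set $\supp\mathcal L_n'$ and its complement in $\supp Q_n$ are equidistributed, and both use the concatenation $W_A W_A^{-1} W_B$ (the paper's $W''{W''}^{-1}W'$) to sweep the union at cost $2|W_A|+|W_B|$, then average. Your version is more explicit---introducing $\kappa(g,A)$, separating the geodesic projection, the detour inequality $\kappa(g,A\cup B)\le 2\kappa(g,A)+\kappa(g,B)$, and the exchangeability of the lit set, and verifying independence of the $c_i$ and flagging that the fair-coin switch on $\mathbb Z_2$ is genuinely needed---but the underlying idea is identical to the paper's.
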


\begin{proof}
Note that given $Q_n$, each lamp in $\supp Q_n$ is turned on or off with probability half each, independently.

If $\mathcal L_n'$ is the lamp configuration at time $n$, then given $Q_n$, the distribution of $\mathcal L_n'$ and the inverted configuration $\mathcal L_n''$ defined as
$$\mathcal L_n''(s)=\one(Q_n(s)>0)-\mathcal L_n'(s)$$
is the same. Let $X_n''=(\mathcal L_n'',Y_n)$. By the above, $X_n''$ has the same distribution as $X_n'$.

Since in $X_n'$ all the lamps in $\supp \mathcal L_n'$ are turned on, there is a word $W'$ of length at most $|X_n'|$, equivalent to $X_n'$, whose inverted orbit includes  $\supp\mathcal L_n'$. Similarly, there is a word $W''$ of length at most $|X_n''|$ whose inverted orbit includes $\supp \mathcal L_n''$. Then the inverted orbit of the word $W''{W''}^{-1}W'$ contains $\supp Q_n$,  it is equivalent to $X_n'$ and its expected word length is at most
\[2\ev |X_n''|+\ev|X_n'| =3\ev |X_n'|. \qedhere\]
\end{proof}

\begin{theorem}\label{t:upper_tight}
For every $q\ge \ev |Q_n|$ we have
$$
\ev |X_n|\le 3\overline \lambda\left(n/q\right)q
+12\sqrt{n} \sqrt{H(Y_n) + H(\supp Q_n) + \ev |Q_n| }.
$$
\end{theorem}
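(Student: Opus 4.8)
The plan is to represent $X_n=(\mathcal L_n,Y_n)$ by a short word built in two layers. Take a word $W$ in $\Gamma$ that is equivalent to $Y_n$ and whose inverted orbit already sweeps out every lit site, and then splice into it, at one visit to each site $s\in\supp\mathcal L_n$, a geodesic word of $\Lambda$ (realized by switches at $o$) that installs the value $\mathcal L_n(s)$. Splicing a block of switches neither changes the $\Gamma$-component of the word nor disturbs switches at other sites (switches at distinct sites commute in $\Lambda^S$ and a pure switch keeps the base point fixed, so the whole block lands at $s$), so the resulting word is equivalent to $X_n$ and has length at most $|W|+\sum_{s\in\supp\mathcal L_n}|\mathcal L_n(s)|_\Lambda$. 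For $W$ I take the word produced by Lemma~\ref{l:Gady}: it is equivalent to $Y_n$, its inverted orbit contains $\supp Q_n\supseteq\supp\mathcal L_n$, and $\ev|W|\le 3\ev|X_n'|$, where $X_n'$ is the switch-walk-switch walk on $\mathbb Z_2\wr_S\Gamma$. I will bound $\ev|W|$ and $\ev\sum_s|\mathcal L_n(s)|_\Lambda$ separately; a routine point, handled via the comparison \eqref{e:monspeed}, is that this word lives in the ``walk-and-switch'' generating set rather than the switch-walk-switch one.

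\textbf{The travel term.} Apply Proposition~\ref{p:VC} to $X_n'$ on the Cayley graph of $\mathbb Z_2\wr_S\Gamma$, which is vertex-transitive so the constant there is $\log 2$, and then feed in the finite-lamp entropy bound \eqref{e:finitelamp}, namely $H(X_n')\le H(Y_n)+\ev|Q_n|\log 2+H(\supp Q_n)$. Using $\ev|Q_n|\ge 1$ to absorb the additive constants into $\ev|Q_n|$ and then Lemma~\ref{l:Gady}, this gives $\ev|W|$ bounded by a universal constant times $\sqrt n\,\sqrt{H(Y_n)+H(\supp Q_n)+\ev|Q_n|}$, which after tidying the numerology is the second term of the theorem.

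\textbf{The lamp term.} Conditionally on the base walk $(Y_i)_{i\le n}$, the values $\mathcal L_n(s)$, $s\in\supp Q_n$, are independent, and $\mathcal L_n(s)$ has the law of a $\widetilde Q_n(s)$-step random walk on $\Lambda$ (see \eqref{e:steps}), so $\ev\big[\,|\mathcal L_n(s)|_\Lambda\,\big|\,(Y_i)\big]\le\overline\lambda(\widetilde Q_n(s))$. Since $\overline\lambda$ is nondecreasing and subadditive, $\sum_{s\in\supp Q_n}\widetilde Q_n(s)=2n$, and there are $|Q_n|$ summands, the elementary bound $\overline\lambda(x)\le(xm/X+1)\overline\lambda(X/m)$ summed over the summands $x$ (with $X=2n$, $m=|Q_n|$), together with $\overline\lambda(2t)\le 2\overline\lambda(t)$, gives $\ev\big[\sum_s|\mathcal L_n(s)|_\Lambda\,\big|\,(Y_i)\big]\le C\,|Q_n|\,\overline\lambda(n/|Q_n|)$ for an absolute constant $C$. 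It remains to take expectations. Here the point is that $\overline\lambda$ is only subadditive, not concave, so Jensen's inequality is unavailable for passing to $\ev|Q_n|$; instead, subadditivity makes $x\mapsto x\,\overline\lambda(n/x)$ nondecreasing up to a factor $2$ (if $x\le y$ then $x\,\overline\lambda(n/x)\le x\lceil y/x\rceil\overline\lambda(n/y)\le 2y\,\overline\lambda(n/y)$). Splitting on $\{|Q_n|\le q\}$ versus its complement and using Markov's inequality with $q\ge\ev|Q_n|$,
\[
\ev\big[|Q_n|\,\overline\lambda(n/|Q_n|)\big]\ \le\ 2q\,\overline\lambda(n/q)\;+\;\overline\lambda(n/q)\,\ev\big[|Q_n|\,\one_{\{|Q_n|>q\}}\big]\ \le\ 3q\,\overline\lambda(n/q).
\]
Combining the two contributions and optimizing the elementary constants produces the stated inequality.

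\textbf{Where the work is.} The skeleton above is short; the effort is in the constants and the metric bookkeeping. The genuinely delicate step is keeping the comparison between the switch-walk-switch word metric on $\Lambda\wr_S\Gamma$ — in which $\ev|X_n|$ is measured — and both the split generating set used to build the word and the auxiliary $\mathbb Z_2$-lamplighter $X_n'$ down to a bounded factor; this is exactly the role of \eqref{e:monspeed} and \cite{LeePeres}. The only other subtlety is that $\overline\lambda$ is merely subadditive, which is why the factor-$2$ monotonicity together with Markov's inequality replaces a Jensen argument, and why the free parameter $q\ge\ev|Q_n|$ is useful: the bound is, up to constants, increasing in $q$, so in applications one simply takes $q=\ev|Q_n|$ or any convenient upper bound for it.
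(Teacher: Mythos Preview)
Your proof is correct and follows the paper's route: the same $A+B$ split into a travel term and a lamp term, the same handling of the travel term via Lemma~\ref{l:Gady} together with Proposition~\ref{p:VC} and \eqref{e:finitelamp}, and subadditivity of $\overline\lambda$ for the lamp term. The one divergence is in the lamp bound: you first pass through the random quantity $C\,|Q_n|\,\overline\lambda(n/|Q_n|)$ and then need your factor-$2$ monotonicity plus Markov split to descend to $q$, whereas the paper rounds each occupation count directly up to a multiple of the \emph{deterministic} scale $k=n/q$, obtaining $\sum_s\overline\lambda(\widetilde Q_n(s))\le \overline\lambda(n/q)\,(2q+|Q_n|)$ in one stroke, after which taking expectations immediately yields the constant~$3$. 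Your detour is sound but costs an extra factor (your $C$ is $4$, giving $12q\,\overline\lambda(n/q)$), so the closing phrase ``optimizing the elementary constants produces the stated inequality'' is not actually justified by your argument; rounding to the fixed scale $n/q$ rather than the random $n/|Q_n|$ is the cleaner move.
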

\begin{proof}
We have $|X_n|\le A+B$, where $A$ is the length of the shortest word
equivalent to $Y_n$ whose inverted orbit contains $\supp Q_n$ and $B$ is the number of lamp steps required to turn off the lamps.

We first bound $B$. Given $Q_n$, the expected number of lamp moves required to turn off the lamps is bounded above by
$$
\ev[B|Q_n]\le \sum_{v\in \supp Q_n} \overline \lambda(Q_n(v))
$$
first we use the monotonicity of $\overline \lambda$ to round up each argument to the next integer multiple of $k=n/q$. The total sum of the new arguments is at most the total sum of the originals ones (i.e. 2n) plus $|Q_n|k$. Then we use subadditivity to bound the sum by
$$
\overline\lambda(k)\frac{2n+|Q_n|k}k = \overline\lambda(k)(2q+|Q_n|)
$$
Taking expectations we get
$$
\ev B\le \overline \lambda\left(n/q\right)3q.
$$
Regarding $A$, Lemma \ref{l:Gady} gives us
$$
\ev A \le 3 \ev |X'_n|,
$$
where $X'_n$ is the switch-walk-switch random walk on $ \mathbb Z_2\wr_S G$. For this, we use the entropy bound \eqref{e:finitelamp} and the Varopoulos-Carne bounds in Proposition \ref{p:VC} to get
$$\ev |X_n'| \le  4\sqrt{n} \big(H(Y_n) + H(\supp Q_n) + (\log 2)\ev |Q_n|\big)^{1/2}
$$
This completes the proof.
\end{proof}

We summarize the results of this section and deduce some conclusions using the Varopoulos-Carne bounds in Proposition \ref{p:VC}. The second upper bound on the speed is usually sharper.
\begin{corollary}\label{c:spedd_in_pwp}
For every $p\le \pr(T>n)$ and for every $q\ge \sum_{i=0}^n \pr (T>i)$ we have
\begin{eqnarray}
\ev |X_n| &\ge& \frac{np\underline \lambda(1/p)}{16}, \label{e:speedlower} \\
\ev |X_n| &\le&  4\sqrt{n} \big(H(Y_n) + H(\supp Q_n|\, |Q_n|) + 2
q\left(h\left(n/q\right) + \log (n+1)\right)\big)^{1/2}\,\notag
\\
\ev |X_n|&\le &3\overline \lambda\left(n/q\right)q +12\sqrt{n} \sqrt{H(Y_n) + H(\supp Q_n) + \ev |Q_n|},\label{e:speedupper}
\\
H(X_n) &\ge& \frac{np^2\underline \lambda^2(1/p)}{4096}, \notag
\\
H(X_n) &\le&  H(Y_n) + H(\supp Q_n|\, |Q_n|) + 2
q\left(h\left(n/q\right) + \log (n+1)\right). \notag
\end{eqnarray}
\end{corollary}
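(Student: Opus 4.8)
The plan is to package the five displayed inequalities out of results already in hand, the only genuinely new ingredient being two applications of the Varopoulos--Carne estimate of Proposition~\ref{p:VC} to the switch-walk-switch walk. Two of the five bounds require essentially nothing: inequality \eqref{e:speedlower} is verbatim Theorem~\ref{t:speed_lb}, and inequality \eqref{e:speedupper} is verbatim Theorem~\ref{t:upper_tight}, once one observes via Corollary~\ref{c:Qn sum return} that $\sum_{i=0}^n\pr(T>i)=\ev|Q_n|$, so that the hypothesis ``$q\ge\sum_{i=0}^n\pr(T>i)$'' coincides with the hypothesis ``$q\ge\ev|Q_n|$'' of that theorem. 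The final inequality (the entropy upper bound) is just \eqref{e:ent_upper_general} of Theorem~\ref{t:Ent_upper_general} with $\ev|Q_n|$ replaced throughout by the larger number $q$; this replacement is licensed precisely by the clause of Theorem~\ref{t:Ent_upper_general} asserting that the right-hand side of \eqref{e:ent_upper_general} is nondecreasing in $\ev|Q_n|$.

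For the second inequality (the first speed upper bound), write $X_n=(\mathcal L_n,Y_n)$ and apply Proposition~\ref{p:VC} to $X_n$ on the Cayley graph of $\Lambda\wr_S\Gamma$ with respect to its finite symmetric switch-walk-switch step set. A Cayley graph is vertex-transitive, so $\max_v\deg v/\deg o=1$ and the error term in Proposition~\ref{p:VC} is simply $\log 2$, giving $\ev|X_n|\le\sqrt{2n\,(H(X_n)+\log 2)}$. Bounding $H(X_n)\le\mathcal H:=H(Y_n)+H(\supp Q_n\,|\,|Q_n|)+2q\bigl(h(n/q)+\log(n+1)\bigr)$ via Theorem~\ref{t:Ent_upper_general} and the monotonicity just mentioned, and using $q\ge\ev|Q_n|\ge 1$ together with $n\ge 1$ to get $\mathcal H\ge 2q\log(n+1)\ge 2\log 2>\log 2$, we obtain $H(X_n)+\log 2\le 2\mathcal H$ and hence $\ev|X_n|\le\sqrt{4n\mathcal H}=2\sqrt n\sqrt{\mathcal H}\le 4\sqrt n\sqrt{\mathcal H}$, which is the claimed bound.

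For the entropy lower bound I would reuse the intermediate estimate established inside the proof of Proposition~\ref{p:VC}, namely (again for a Cayley graph, where $\kappa=\log 2$)
$$
H(X_n)\ \ge\ \frac{\ev^2(|X_n|)}{2n}-\log 2 .
$$
Substituting $\ev|X_n|\ge np\underline\lambda(1/p)/16$ from Theorem~\ref{t:speed_lb} gives $H(X_n)\ge np^2\underline\lambda^2(1/p)/512-\log 2$, and the additive $\log 2$ is absorbed into the numerical constant (the slack between $512$ and $4096$ is exactly what makes this work), via a routine dichotomy on the size of $np^2\underline\lambda^2(1/p)$: once this quantity exceeds a fixed absolute threshold one has $np^2\underline\lambda^2(1/p)/512-\log 2\ge np^2\underline\lambda^2(1/p)/4096$, while below the threshold the asserted right-hand side is itself at most an absolute constant and there is nothing to prove beyond a crude estimate on $H(X_n)$.

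There is no serious obstacle here; the corollary is the bookkeeping step that collects the analytic work of the section (the renewal structure of the inverted orbit, the subadditivity and concavity of $\underline\lambda,\overline\lambda$ and $h$, and the Varopoulos--Carne bound). The only points that need care are numerical: tracking constants through Proposition~\ref{p:VC}, recognizing the Varopoulos--Carne degree factor as $\log 2$ (which is where regularity of the Cayley graph enters), and invoking the monotonicity clause of Theorem~\ref{t:Ent_upper_general} when passing from $\ev|Q_n|$ to an arbitrary upper bound $q$.
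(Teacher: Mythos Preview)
Your plan is exactly the paper's: the corollary is presented there as a direct summary of Theorems~\ref{t:speed_lb}, \ref{t:Ent_upper_general}, and \ref{t:upper_tight} combined with Proposition~\ref{p:VC}, and no separate proof is given. The one point you leave loose---the ``crude estimate on $H(X_n)$'' in the small-$A$ branch of the entropy lower bound---is not literally an absolute constant (monotonicity of random-walk entropy gives $H(X_n)\ge H(X_1)$, which depends on the step distribution), so the specific value $4096$ is not universal in full generality; the paper is equally silent on this, and it is immaterial for every application in the paper since there $A=np^2\underline\lambda^2(1/p)$ grows with $n$.
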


\begin{proof}[Proof of Theorem \ref{t:maingeneral}]
The lower bound follows from \eqref{e:speedlower} and the almost monotonicity of speed \eqref{e:monspeed}. The bound \eqref{e:speedupper}, our assumptions \eqref{e:cond1}, and \eqref{e:monspeed} yields
$$
\ev |X_n| \le cn(p_n\lambda(1/p_n) + \sqrt{p_n}).
$$
Now for a generating random walk on any transitive graph $\lambda(n) \ge c \sqrt{n}$, see \cite{LeePeres}. The upper bound follows.
\end{proof}

\section{The assembly line}\label{s:assembly}

The goal of this section is to describe an analyze a simple birth-and-death Markov chain, called the {\bf assembly line},
which is the core of our construction. In the next section, we will show that it essentially a Schreier graph of a group. However, the first part of the analysis does not need this fact.

Fix a bounded sequence of integers $m_i\ge 2$, and consider the set $S$
of left-infinite words $\dots w_3w_2w_1$, where
$w_i\in \{0,\ldots,m_{i-1}\}$, and $w_i$ are 0 for all $i$ large enough.
We consider the following random walk
on this set. In each step, toss a fair coin; if heads, randomize $w_1$; if tails, randomize the letter $w_{i+1}$ after the first nonzero letter $w_{i}$.

This can be thought of as an infinite assembly line, where $w_i$
is the state of the worker at position $i$, either dozing
off ($w_i=0$) or doing one of the $m_i-1$ possible jobs. The
foreman, standing in front of the line, has the option
to nudge the first worker, or shout at the first awake
worker in the line to nudge the worker after them. Nudged
workers just change their state to one chosen uniformly at random.

This is a reversible random walk on $S$, and its graph
(as we will see) is the Schreier graph of the action of a group denoted $\mm$. In order to analyze the speed in the permutation wreath product $\Lambda \wr_S \mm$, we just have to understand the
first return time $T$ of this walk started at $o=\ldots 000$.

Fortunately, this walk projects to a birth-and-death Markov chain. Indeed, consider the projection from $S$ to the set of finite binary strings
$$
\ldots w_3w_2w_1\mapsto  \ldots\bar w_3\bar w_2\bar w_1, \qquad \bar w_i=\one(w_i>0).
$$
The projection of the walk has the same description as above, except that ``randomize'' for the $\bar w_i$ means
$$
\bar w_i :=\begin{cases} 0&\mbox{with probability } 1/m_i\\
1&\mbox{with probability } 1-1/m_i.
\end{cases}
$$ Note also that from each $\bar w\in \bar S$, there are only two possible new positions one can move to (except $o$, where there is only one). This implies that the graph structure of this walk is that of the nearest-neighbor graph of natural numbers.
Indeed, the structure is a standard Gray code -- an ordering of all binary strings so that consecutive strings differ by only one bit.

It is easy to that the position of a given string in this order is
$$\mbox{position}(\ldots b_3b_2b_1)=\ldots \overleftarrow{b_3}\overleftarrow{b_2}\overleftarrow{b_1},\qquad
\mbox{where }\overleftarrow{b_i}=b_i+b_{i+1}+\ldots  \mod 2,
$$
where the last sum has only finitely many nonzero terms.

Thinking of the walk on $S$ as a random walk on a weighted
graph, the uniform stationary measure on $S$ projects
to a stationary measure on $\bar S$. More precisely,
$$
\pi(\ldots b_3b_2b_1)=\prod_{i=1}^\infty (m_i-1)^{b_i}
$$
and the edge weights are within constants of the vertex weights. This automatically gives that the resistance between $o$ and $2^n$ in $\bar S$ satisfies
\begin{equation}\label{e:res}
r_\ell\le \res(o,2^\ell)\le 2m_* r_\ell,
\end{equation}
where $r_\ell$ is the sum of the inverse vertex weights for vertices between these two positions, and $m_*$ is the maximum degree.  That is,
$$
r_\ell = \sum_{b_1,\ldots, b_n} \pi^{-1}(b_n\ldots b_1)=\sum_{b_1,\ldots, b_n}  \prod_{i=1}^\ell \left(\frac1{m_i-1}\right)^{b_i} =\prod_{i=1}^\ell \left(\frac {1}{m_i-1}+1\right)
$$
since the sum factorizes. In short,
$$
\res(o,2^\ell) \asymp r_\ell:= \prod_{i=1}^\ell \frac {m_i}{m_i-1}
$$
Define $v_\ell = m_1\cdots m_\ell$. Note that the ball of radius $2^\ell$ in $S$ about $o$ contains exactly $v_\ell$  vertices.

\begin{proposition}\label{p:return_sum_ub}
For $\ell>0$, we have $$\sum_{i=0}^{v_\ell r_\ell} \pr(T>i) \le 2 v_\ell.$$
\end{proposition}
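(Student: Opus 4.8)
The plan is to recognize $\sum_{i=0}^{v_\ell r_\ell}\pr(T>i)$ as an expected truncated return time and then split it into a \emph{local} contribution, controlled by a volume, and an \emph{escape} contribution, controlled by a resistance. Set $N=v_\ell r_\ell$. By the tail-sum formula (as used in Corollary~\ref{c:Qn sum return}), $\sum_{i=0}^{N}\pr(T>i)=\ev[T\wedge(N+1)]$, where $T$ is the first return time to $o$ of the walk on $S$; since $o=\dots000$ is the only element of $S$ projecting to the root of $\bar S$, this is also the first return time of the projected birth-and-death chain on $\bar S$. Let $z$ be the vertex of $\bar S$ at distance $2^\ell$ from $o$ and let $\tau_z$ be its hitting time. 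A useful geometric observation is that the projection $S\to\bar S$ carries the $v_\ell$-element ball $B_{2^\ell}(o)$ onto the Gray-code segment $\{0,1,\dots,2^\ell-1\}$ — these being exactly the positions of the binary strings supported on the first $\ell$ coordinates — so that $\sum_{y<2^\ell}\pi_y=\sum_{b\in\{0,1\}^\ell}\prod_i(m_i-1)^{b_i}=\prod_{i\le\ell}m_i=v_\ell$.

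Using the pointwise inequality $T\wedge(N+1)\le (T\wedge\tau_z)+(N+1)\,\one\{\tau_z<T\}$, it suffices to bound $\ev[T\wedge\tau_z]$ and $(N+1)\,\pr(\tau_z<T)$ by multiples of $v_\ell$. For the first term I would compare with the chain $\widetilde X$ obtained from $\bar S$ by reflecting at $z$, and write $\widetilde T$ for its first return time to $o$: reflecting at an endpoint leaves the conductances among $\{0,\dots,z\}$ unchanged, so $\widetilde X$ is a finite reversible chain whose stationary measure is the restriction of $\pi$, of total mass $\asymp v_\ell$, with $\pi_o=1$. Coupling $\widetilde X$ with the original chain so that they agree until the latter first reaches $z$, one checks that $\widetilde T\ge T\wedge\tau_z$ always; hence $\ev[T\wedge\tau_z]\le\ev[\widetilde T]$, which equals the total stationary mass of $\widetilde X$ divided by $\pi_o$, i.e.\ $\asymp v_\ell$. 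For the escape term, $\pr(\tau_z<T)$ is exactly the escape probability $\bigl(C_o\,\res(o,2^\ell)\bigr)^{-1}$, where $C_o$ is the total conductance at $o$ and is $\asymp\pi_o=1$; together with $\res(o,2^\ell)\ge r_\ell$ from \eqref{e:res}, this gives $\pr(\tau_z<T)\asymp r_\ell^{-1}$, and hence $(N+1)\,\pr(\tau_z<T)\asymp v_\ell r_\ell/r_\ell=v_\ell$. Adding the two estimates gives $\sum_{i=0}^{v_\ell r_\ell}\pr(T>i)\asymp v_\ell$; tracking the constants — all governed by the normalization $\pi_o=1$, by $C_o$, and by the bounded ratio between edge weights and vertex weights, i.e.\ by $m_*$ — yields the stated bound $2v_\ell$.

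The step I expect to be the main obstacle is making the local estimate clean: one must identify the stationary measure of the reflected chain precisely and justify $\ev[T\wedge\tau_z]\le\ev[\widetilde T]$ in the presence of state-dependent holding probabilities and of the $O(1)$ discrepancy between edge and vertex weights. The escape term is then routine given the escape-probability identity and \eqref{e:res}, and the only remaining work is to carry the constants tightly enough to reach exactly $2v_\ell$ rather than merely $C v_\ell$.
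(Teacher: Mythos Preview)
Your proposal is correct and is essentially the paper's own argument, just packaged slightly differently. The paper also truncates the chain at $2^\ell$, couples the truncated walk with the original one until the hitting time $\tau$ of $2^\ell$, and splits via the implication $\{T>i\}\subset\{T'>i\}\cup\{T>\tau\}$; summing gives $\sum_{i=0}^{n}\pr(T>i)\le \ev T' + n\,\pr(T>\tau)$, and then $\ev T'=v_\ell$ by Kac's formula and $\pr(T>\tau)\le 1/r_\ell$ by the resistance bound, so with $n=v_\ell r_\ell$ one lands exactly on $2v_\ell$. Your version rewrites the sum as $\ev[T\wedge(N+1)]$ and uses the pointwise inequality $T\wedge(N+1)\le (T\wedge\tau_z)+(N+1)\one\{\tau_z<T\}$, which is the same decomposition at the level of random variables rather than events.

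The only place your write-up is looser than the paper's is the constants: the paper gets $\ev T'=v_\ell$ on the nose (the truncated chain has total $\pi$-mass exactly $\prod_{i\le\ell}m_i$) and $\pr(T>\tau)=1/\res(o,2^\ell)\le 1/r_\ell$ exactly, so the sum is $\le v_\ell+v_\ell$ with no further bookkeeping. Your ``reflected'' chain may carry the extra vertex $2^\ell$ in its stationary mass, and you write $C_o\asymp 1$ rather than $C_o=1$; both are easily fixed (drop the endpoint from the reflected chain, and note that with the normalization $\pi_o=1$ the total conductance at $o$ is exactly~$1$), after which your argument yields the constant~$2$ without further work. The coupling inequality $\widetilde T\ge T\wedge\tau_z$ that you flag as the main obstacle is immediate: the two chains agree up to time $\tau_z$, so if $T\le\tau_z$ then $\widetilde T=T$, and if $T>\tau_z$ then also $\widetilde T>\tau_z=T\wedge\tau_z$.
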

\begin{proof}
We truncate the graph at $2^\ell$, and consider the random walk on the truncated graph together with the random walk on the original graph. These are naturally coupled until the hitting time $\tau$ of $2^\ell$. Let $T'$ denote the first return time in the truncated graph. Then we have
$$
T>i \;\; \Rightarrow \;\; T\wedge \tau>i \mbox{ or } T>\tau \;\; \Rightarrow \;\; T'>i \mbox{ or } T>\tau.
$$
So we have (by bounding the $i=0$ case by 1)
$$\sum_{i=0}^{n} \pr(T>i) \le 1+
\sum_{i=1}^\infty \pr(T'>i) + n\pr(T>\tau).
$$
The sum of the first quantities equals $\ev T'=v_\ell$ (since in any connected weighted graph the expected return time to a vertex equals the total volume over the weight of the vertex). Also $\pr(T>\tau) = 1/\res(0,2^n)\le 1/r_\ell$. The claim follows if we set $n=v_\ell r_\ell$.
\end{proof}

\begin{lemma}\label{l:RW_notsmall}
Let $X$ be nonnegative interger-valued random variable with expectation $a$ satisfying $\ev[X-x|X\ge x]\le a$ for all integers $x\ge 0$. Then
$$
\pr(X> a/4)\ge 1/31.
$$
\end{lemma}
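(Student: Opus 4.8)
The plan is to bound the left tail $\pr(X\ge x)$ from below by a quantity linear in $a-x$, and then to choose $x$ just above $a/4$. The hypothesis $\ev[X-x\mid X\ge x]\le a$ is a mean-residual-life bound; rewritten, it says $\ev[X\mid X\ge x]\le a+x$ for every integer $x\ge 1$ with $\pr(X\ge x)>0$. This is the only input needed.

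First I would establish the key estimate: for every integer $x\ge 1$,
$$
\pr(X\ge x)\ \ge\ 1-\frac{x}{a+1}.
$$
Indeed, split $a=\ev X=\ev[X;\,X\le x-1]+\ev[X;\,X\ge x]$. Since $X$ is nonnegative and integer valued, $\ev[X;\,X\le x-1]\le (x-1)\pr(X\le x-1)$; and by the hypothesis (trivially true when $\pr(X\ge x)=0$) we have $\ev[X;\,X\ge x]\le (a+x)\pr(X\ge x)$. Writing $q=\pr(X\ge x)$ and substituting,
$$
a\ \le\ (x-1)(1-q)+(a+x)q\ =\ (x-1)+(a+1)q,
$$
so $q\ge (a+1-x)/(a+1)$; when this right-hand side is nonpositive the estimate holds trivially.

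Then I would apply this with $x_0=\lfloor a/4\rfloor+1$, the smallest integer exceeding $a/4$. Because $X$ is integer valued, $\{X>a/4\}=\{X\ge x_0\}$, so
$$
\pr(X>a/4)\ \ge\ 1-\frac{\lfloor a/4\rfloor+1}{a+1}\ \ge\ 1-\frac{a/4+1}{a+1}\ =\ \frac{3a}{4(a+1)}.
$$
A direct computation gives $\frac{3a}{4(a+1)}\ge\frac1{31}$ whenever $a\ge \frac{4}{89}$; for smaller $a$ one has $a/4<1$, hence $\{X>a/4\}=\{X\ge 1\}$, and the key estimate at $x=1$ yields $\pr(X\ge 1)\ge \frac{a}{a+1}\ge\frac1{31}$ provided $a\ge\frac1{30}$. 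In every application of the lemma $X\ge 1$ almost surely, so $a\ge 1$ and in fact $\pr(X>a/4)\ge\frac38$ with room to spare.

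The argument is elementary and I do not expect a real obstacle. The only points needing care are: phrasing the hypothesis as $\ev[X\mid X\ge x]\le a+x$ while handling the degenerate case $\pr(X\ge x)=0$; the floor-function bookkeeping that turns the threshold $a/4$ into the integer $x_0$ and uses integrality of $X$ to rewrite $\{X>a/4\}$ as $\{X\ge x_0\}$; and checking the numerics that produce the constant $\frac1{31}$ — the split at $a/4$ is chosen precisely so that the mass of $X$ below $x_0$ contributes at most $a/4$ to $\ev X$, leaving $3a/4$ to be absorbed by the mean-residual bound with amplification factor $a+1$.
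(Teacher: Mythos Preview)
Your argument is correct and considerably more direct than the paper's. The paper iterates the hypothesis together with Markov's inequality to obtain an exponential tail bound $\pr(X\ge kea)\le e^{-k}$, uses this to show $\ev[X\wedge 8a]\ge a/2$, and then applies Markov to the bounded variable $8a-X\wedge 8a$ to extract the constant $1/31$. You instead use the hypothesis \emph{once}: splitting $\ev X$ over $\{X\le x-1\}$ and $\{X\ge x\}$ and bounding each piece yields the linear lower-tail estimate $\pr(X\ge x)\ge 1-x/(a+1)$ directly. This is shorter, avoids the tail integration, and in fact produces a much better constant (for $a\ge 1$ you get $\pr(X>a/4)\ge 3/8$ rather than $1/31$).

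One point worth flagging: as stated, the lemma is actually false for very small $a$ --- a Bernoulli$(a)$ variable with $a<1/31$ satisfies the hypothesis at every integer $x$ where the conditional expectation is defined, yet $\pr(X>a/4)=a$. Your proof is honest about this, covering $a\ge 1/30$ and noting that in the paper's application (hitting times of a walk started away from $o$) one has $X\ge 1$ almost surely, hence $a\ge 1$. The paper's own proof glosses over the same issue (and is informal about applying the integer-$x$ hypothesis at non-integer thresholds $kea$), so your treatment is if anything cleaner on this front.
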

\begin{proof}
By Markov's inequality for all integers $t$ we have
$$
P[X\ge t+ea|T>t]\le 1/e
$$
This implies that for all integers $k$
$$
P[X\ge kea]\le e^{-k}
$$
and so for all positive real numbers $k$
$$
P[X\ge kea]\le e^{-k+1}
$$
This gives
$$
\ev X - \ev[X\wedge ra] =\int_{ra}^\infty P[X\ge x]\,dx \le e\int_{ra}^\infty e^{-x/(ae)}\,dx = ae^2e^{-r/e}
$$
With $r=8$ we get
$$
\ev X - \ev[X\wedge 8a] \le \ev X/2$$
and therefore
$$
\ev[X\wedge 8a]\ge \ev X/2
$$
Applying Markov's inequality to $8a-X\wedge 8a$
we get
$$
\pr(8a-X\wedge 8a \ge ka) \le \frac{7.5a}{ka}=7.5/k
$$
setting $k=7.75$ we get
$$
\pr(X> a/4)\ge 1-\frac{7.5}{7.75}=1/31.
$$
\end{proof}

\begin{lemma}\label{l:Expexted Hitting lb}
Consider the hitting time $T'$ of $o$ of the random walk
on the graph truncated at $2^\ell$. We have
$$
\ev_{2^\ell} T' \ge r_{\ell-1}v_{\ell-1}(m_\ell-1).
$$\end{lemma}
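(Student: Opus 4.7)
The plan is to view the projected walk on $\bar S$, truncated at $2^\ell$, as a reversible nearest-neighbor birth-and-death chain on $\{0, 1, \ldots, 2^\ell\}$ with stationary weights $\pi_k$ and edge conductances $c_k$, and to invoke the standard birth-and-death hitting-time identity $\ev_{i+1} T_i = \pi([i+1, 2^\ell])/c_i$. Summing in $i$ from $1$ to $2^\ell$ gives
$$
\ev_{2^\ell} T' \;=\; \sum_{k=0}^{2^\ell-1}\frac{\pi([k+1, 2^\ell])}{c_k}.
$$
Restrict this sum to the lower-half indices $k = 0, 1, \ldots, 2^{\ell-1} - 1$. For such $k$ the interval $[k+1, 2^\ell]$ contains $[2^{\ell-1}, 2^\ell]$, so factoring out the uniform lower bound $\pi([k+1, 2^\ell]) \ge \pi([2^{\ell-1}, 2^\ell])$ and recognizing the leftover $\sum_{k=0}^{2^{\ell-1}-1} 1/c_k$ as the effective resistance from $o$ to $2^{\ell-1}$ along the path yields
$$
\ev_{2^\ell} T' \;\ge\; \pi([2^{\ell-1}, 2^\ell]) \cdot \res(o, 2^{\ell-1}).
$$

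The two factors can then be estimated from the Gray-code structure. The positions $k \in \{2^{\ell-1}, \ldots, 2^{\ell}-1\}$ are exactly the binary strings with $b_\ell = 1$, higher bits zero, and $b_1, \ldots, b_{\ell-1}$ free. The same factorization used by the paper to compute $r_\ell$ gives
$$
\sum_{k=2^{\ell-1}}^{2^\ell - 1}\pi(k) \;=\; (m_\ell - 1)\prod_{i=1}^{\ell-1}\bigl(1 + (m_i-1)\bigr) \;=\; (m_\ell - 1)\,v_{\ell-1},
$$
which is a lower bound on $\pi([2^{\ell-1}, 2^\ell])$ (the remaining position $2^\ell$ only contributes more). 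For the resistance, the lower bound in \eqref{e:res} applied at level $\ell - 1$ gives $\res(o, 2^{\ell-1}) \ge r_{\ell-1}$, the truncation point beyond $2^{\ell-1}$ being irrelevant since resistance along a path depends only on the edges between the two endpoints. Multiplying the two bounds produces the claimed $\ev_{2^\ell} T' \ge r_{\ell-1} v_{\ell-1}(m_\ell - 1)$.

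The only point requiring attention is keeping straight the correct orientation of the partial sums of $\pi$ in the birth-and-death hitting-time formula: the edge $(k, k+1)$ contributes $\pi([k+1, 2^\ell])$, the total weight strictly above the edge, rather than the lower-tail sum $\pi([0, k])$ that would compute $\ev_o T_{2^\ell}$ in the opposite direction. Once this is pinned down the rest is bookkeeping using factorizations already present in the paper.
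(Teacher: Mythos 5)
Your proof is correct and follows essentially the same route as the paper: both invoke the explicit birth-and-death hitting-time decomposition $\ev_{2^\ell}T'=\sum_k \pi([k+1,2^\ell])/c_k$, restrict the outer sum to the lower half and the inner sum to the upper half, factor the two pieces apart, and compute them via the Gray-code stationary weights. The only cosmetic difference is that you identify the leftover $\sum 1/c_k$ as $\res(o,2^{\ell-1})$ and invoke \eqref{e:res}, whereas the paper directly bounds $1/c_k\ge 1/\pi_k$ and recognizes $\sum 1/\pi_k$ as $r_{\ell-1}$ inline.
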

\begin{proof}
This is a birth-and-death chain, one of the simplest kind of Markov chains, and the expected hitting times can be computed explicitly.
By \cite{LPw} (page 27, formula (2.16)), we have
$$
\ev_{2^\ell} T' =\sum_{i=1}^{2^\ell} \frac1{p(i-1,i)\pi_i}\sum_{j=i+1}^{2^\ell}\pi_j\ge
\sum_{i=1}^{2^\ell-1} \frac1{\pi_i}\sum_{j=2^{\ell-1}+1}^{2^\ell}\pi_j=r_{\ell-1}v_{\ell-1}(m_\ell-1).$$
\end{proof}

\begin{proposition}\label{p:return_lb}
$$\pr(T>r_{\ell-1} v_{\ell-1}/4) \ge \frac1{62 m_* r_\ell}.$$
\end{proposition}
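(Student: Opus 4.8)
The plan is to exploit that the first return time $T$ is large precisely when the walk, started at $o$, first escapes out to position $2^\ell$ before returning to $o$, and then takes a long time to come back from there. Throughout I work with the assembly‑line chain projected to $\bar S$, which (since $\bar o$ has only $o$ in its preimage) has the same return time $T$ to $o$. Writing $\tau_x$ for the hitting time of position $x$ and $\tau_o^+=T$ for the first return time to $o$, the strong Markov property at $\tau_{2^\ell}$ gives
\[
\pr_o(T>t)\;\ge\;\pr_o\!\left(\tau_{2^\ell}<\tau_o^+\right)\cdot\pr_{2^\ell}\!\left(\tau_o>t\right),
\]
because on the event $\{\tau_{2^\ell}<\tau_o^+\}$ one has $T=\tau_{2^\ell}+(\text{time to reach }o\text{ from }2^\ell)$, which exceeds $t$ on the second event. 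It then suffices to bound the two factors from below with $t=r_{\ell-1}v_{\ell-1}/4$.

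For the first factor I would invoke electrical network theory: the escape probability from $o$ equals $1/(c(o)\,\res(o,2^\ell))$, where $c(o)$ is the total conductance at $o$, and $c(o)=\pi(o)=1$; so by \eqref{e:res},
\[
\pr_o\!\left(\tau_{2^\ell}<\tau_o^+\right)=\frac1{\res(o,2^\ell)}\ge\frac1{2m_* r_\ell}.
\]
For the second factor I first pass to the chain truncated at $2^\ell$, since the untruncated chain is null recurrent and has infinite expected return time: deleting the excursions of the untruncated walk above level $2^\ell$ produces (in distribution) the truncated walk and can only shorten the time to reach $o$, so the truncated hitting time of $o$ from $2^\ell$ is stochastically dominated by the untruncated one, and it is enough to bound $\pr_{2^\ell}(\tau_o>t)$ in the truncated chain. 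I then apply Lemma \ref{l:RW_notsmall} to $X:=\tau_o$ started from $2^\ell$ in the truncated chain. Here $X$ is nonnegative integer‑valued, and because hitting times of $o$ in a birth‑and‑death chain are increasing in the starting position, conditioning on $\{X\ge x\}$ leaves the walk at some position $\le 2^\ell$ whose expected hitting time of $o$ is at most $\ev X$, giving $\ev[X-x\mid X\ge x]\le\ev X$ as required. By Lemma \ref{l:Expexted Hitting lb}, $\ev X\ge r_{\ell-1}v_{\ell-1}(m_\ell-1)\ge r_{\ell-1}v_{\ell-1}$ (as $m_\ell\ge 2$), so $\ev X/4\ge t$, and Lemma \ref{l:RW_notsmall} yields $\pr_{2^\ell}(\tau_o>t)\ge\pr(X>\ev X/4)\ge 1/31$. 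Multiplying the two bounds gives $\pr_o(T>r_{\ell-1}v_{\ell-1}/4)\ge \tfrac1{2m_* r_\ell}\cdot\tfrac1{31}=\tfrac1{62 m_* r_\ell}$, as claimed.

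The step requiring the most care is the reduction from the untruncated to the truncated chain: Lemma \ref{l:RW_notsmall} can only be applied to the truncated hitting time, which has finite expectation and satisfies the "new‑better‑than‑used" inequality via monotonicity of birth‑and‑death hitting times, whereas the chain we actually care about is null recurrent; the excursion‑collapsing comparison is what transports the resulting tail bound back to it. Everything else — the escape‑probability computation through \eqref{e:res} and the strong Markov decomposition of $T$ — is routine given the machinery already set up.
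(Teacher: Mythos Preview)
Your proof is correct and follows essentially the same approach as the paper: decompose $\{T>t\}$ via the escape event $\{\tau_{2^\ell}<\tau_o^+\}$ and the subsequent hitting time of $o$, bound the escape probability through \eqref{e:res}, pass to the truncated chain by stochastic domination, and then apply Lemma~\ref{l:RW_notsmall} together with Lemma~\ref{l:Expexted Hitting lb}. Your justification of the hypothesis $\ev[X-x\mid X\ge x]\le \ev X$ via monotonicity of birth-and-death hitting times is exactly what the paper sketches, and your excursion-collapsing argument is the same as the paper's decomposition $T^*=T'+T''$.
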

\begin{proof}

We have
\begin{equation}\label{e:decomp}
\mbox {walk hits }2^\ell\mbox{ before }o\mbox{ and }T^*>t \Rightarrow T>t,
\end{equation}
where $T^*$ is the time to hit $o$ after the first time of hitting $2^\ell$. The probability of the first event
is given by $1/\res(0,2^l)\ge 1/(r_\ell 2m_*)$ by \eqref{e:res}. We now consider $T^*>t$. Note that
$T^*$ stochastically dominates the hitting time $T'$ from $2^l$ to $o$ in the graph truncated at $2^l$. This is because
we $T^*=T'+T''$ where $T''$ is the number of steps outside
$[0,2^\ell]$ before hitting $0$. Note that $T'$ also stochastically dominates the hitting times of $o$ in the truncated graph when starting from lower vertices. Thus Lemma \ref{l:RW_notsmall} gives
$$
\pr(T'> \ev T'/4)\ge 1/31.
$$
Using the lower bound on $\ev T'$ from Lemma \ref{l:Expexted Hitting lb} we get for $t= r_{\ell-1}v_{\ell-1}/4$ we have
$
\pr(T'> t)\ge 1/31.
$
By the strong Markov property and \eqref{e:decomp} we get
$$
\pr(T>t)\ge \frac{1}{62 m_* r_\ell}.
$$
\end{proof}
Recall that
$$
r_\ell= \prod_{i=1}^\ell \frac {m_i}{m_i-1}, \qquad v_\ell = m_1\cdots m_\ell,\qquad  n_\ell= r_\ell v_\ell
$$

We define the inverse function and an exponent
$$
\ell(n) =  \min \{\ell \;:\: n_\ell \ge n\}, \qquad \alpha_n =
\frac{\log v_{\ell(n)}}{\log n}
$$

Note that
\newcommand{\alphan}{{\alpha_n}}
\begin{eqnarray*}
v_{\ell(n)} &=&n^\alphan,
\\
 n
   \;\;\le\;\; n_{\ell(n)}&\le& 2m_*n,\\
 n^{1-\alphan}
  \;\;\le\;\; r_{\ell(n)}
  &\le& 2m_*n^{1-\alphan}.
\end{eqnarray*}
We express the results of Propositions \ref{p:return_sum_ub} and \ref{p:return_lb} in terms of $n$.
\begin{corollary}\label{c:return_probs_wreath}
\begin{eqnarray}\label{e:low}
\pr(T>n)&\ge&  \frac{1}{500 m_*^2}\,  n^{\alphan-1},
\\ \sum_{i=0}^{n}  \pr(T>i)&\le &
2n^\alphan.\label{e:up}
\end{eqnarray}
The combination implies a matching upper bound on \eqref{e:low} and lower bound on \eqref{e:up} up to a constant factor depending on $m_*$ only.
\end{corollary}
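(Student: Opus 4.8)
The plan is to translate the two propositions directly, using only the asymptotic identities collected just above the corollary statement. For the lower bound \eqref{e:low}, I would start from Proposition~\ref{p:return_lb}, which gives $\pr(T> r_{\ell-1}v_{\ell-1}/4)\ge 1/(62 m_* r_\ell)$, and specialize to $\ell=\ell(n)$. The point is that $T$ is monotone in the sense that $\pr(T>\cdot)$ is nonincreasing, so it suffices to check that $r_{\ell(n)-1}v_{\ell(n)-1}/4 \ge n$ is \emph{almost} true, or more precisely that $r_{\ell(n)-1}v_{\ell(n)-1}$ and $n_{\ell(n)}=r_{\ell(n)}v_{\ell(n)}$ differ only by a bounded factor. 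Indeed $v_{\ell(n)}=m_{\ell(n)}v_{\ell(n)-1}\le m_* v_{\ell(n)-1}$ and $r_{\ell(n)}\le r_{\ell(n)-1}\cdot\frac{m_*}{m_*-1}\le 2r_{\ell(n)-1}$, so $n_{\ell(n)}\le 2m_* \,r_{\ell(n)-1}v_{\ell(n)-1}$; since $n_{\ell(n)-1}<n$ by minimality of $\ell(n)$ I also get $r_{\ell(n)-1}v_{\ell(n)-1}=n_{\ell(n)-1}<n$, hence $r_{\ell(n)-1}v_{\ell(n)-1}\ge n/(2m_*)$ up to the harmless difference between $n_{\ell(n)-1}$ and $n_{\ell(n)}$ — one should instead argue $r_{\ell(n)-1}v_{\ell(n)-1}=n_{\ell(n)}/(m_{\ell(n)} \cdot \frac{m_{\ell(n)}}{m_{\ell(n)}-1})\ge n_{\ell(n)}/(2m_*)\ge n/(2m_*)$. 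Then monotonicity of $\pr(T>\cdot)$ and the bound $r_{\ell(n)}\le 2m_* n^{1-\alpha_n}$ give
$$
\pr(T>n)\ \ge\ \pr\!\left(T> \tfrac{r_{\ell(n)-1}v_{\ell(n)-1}}4\right)\ \ge\ \frac1{62 m_* r_{\ell(n)}}\ \ge\ \frac1{62 m_*\cdot 2m_*\, n^{1-\alpha_n}}\ \ge\ \frac1{500 m_*^2}\, n^{\alpha_n-1},
$$
which is \eqref{e:low}, provided $\ell(n)\ge 1$; the small-$n$ range can be absorbed by the constant.

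For the upper bound \eqref{e:up}, I would apply Proposition~\ref{p:return_sum_ub} with $\ell=\ell(n)$, which yields $\sum_{i=0}^{v_{\ell(n)}r_{\ell(n)}}\pr(T>i)\le 2v_{\ell(n)}$. Since $n_{\ell(n)}=v_{\ell(n)}r_{\ell(n)}\ge n$ by definition of $\ell(n)$, the sum $\sum_{i=0}^n\pr(T>i)$ is dominated by the sum up to $n_{\ell(n)}$, so $\sum_{i=0}^n\pr(T>i)\le 2v_{\ell(n)}=2n^{\alpha_n}$, which is exactly \eqref{e:up}.

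Finally, the ``combination'' remark is a one-line consequence: summing the lower bound \eqref{e:low} over $i\le n$ gives $\sum_{i=0}^n\pr(T>i)\ge \sum_{i=1}^n \frac1{500 m_*^2} i^{\alpha_i-1}$, and since $\alpha_i$ only changes at the sparse breakpoints $n_\ell$ and $v_{\ell(n)}\le v_{\ell(i)}\cdot m_*$ for $i$ in the appropriate range, the sum is $\gtrsim n^{\alpha_n}/m_*^{O(1)}$, matching \eqref{e:up}; dually, $\pr(T>n)$ is at most $\sum_{i=0}^n\pr(T>i)-\sum_{i=0}^{n-1}\pr(T>i)$, but the cleaner route is: if $\pr(T>n)$ were much larger than $n^{\alpha_n-1}$ on a long interval, summing would violate \eqref{e:up}, and conversely. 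I would phrase this as: for any $n$, choosing $n'$ with $n/(2m_*)\le n'\le n$ and $\alpha_{n'}=\alpha_n$ (possible since $\alpha$ is locally constant on scales below $m_*$), $\pr(T>n)\le \frac{1}{n-n'}\sum_{i=n'}^n\pr(T>i)\le \frac{2n^{\alpha_n}}{n-n'}\le \frac{4m_*}{n}n^{\alpha_n}$, giving the matching upper bound on \eqref{e:low}, and similarly the matching lower bound on \eqref{e:up} by summing \eqref{e:low}.

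The main obstacle is purely bookkeeping: controlling the ratios $v_{\ell(n)}/v_{\ell(n)-1}$, $r_{\ell(n)}/r_{\ell(n)-1}$, and $n_{\ell(n)}/n$ by constants depending only on $m_*$, and verifying that the ``exponent'' $\alpha_n$ is stable enough (constant on multiplicative windows of width $\le m_*$) that the discrete sums telescope cleanly into powers of $n$. None of this is deep, but it is where all the constants are generated, so I would state the three ratio bounds as a preliminary display and then let \eqref{e:low} and \eqref{e:up} follow by direct substitution.
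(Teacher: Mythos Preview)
Your argument for the upper bound \eqref{e:up} is correct and matches the paper's proof verbatim. The issue is the lower bound \eqref{e:low}: your monotonicity step is applied in the wrong direction.

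You specialize Proposition~\ref{p:return_lb} at $\ell=\ell(n)$ and then write
\[
\pr(T>n)\ \ge\ \pr\!\Big(T> \tfrac{r_{\ell(n)-1}v_{\ell(n)-1}}4\Big).
\]
For this to follow from the fact that $t\mapsto\pr(T>t)$ is nonincreasing, you would need $r_{\ell(n)-1}v_{\ell(n)-1}/4\ge n$. But $r_{\ell(n)-1}v_{\ell(n)-1}=n_{\ell(n)-1}$, and by the very definition of $\ell(n)$ as a minimum one has $n_{\ell(n)-1}<n$, hence $n_{\ell(n)-1}/4<n$. So the inequality goes the wrong way; your own parenthetical computation $r_{\ell(n)-1}v_{\ell(n)-1}\ge n/(2m_*)$ confirms this --- the quantity sits \emph{below} $n$, not above it. The bound from Proposition~\ref{p:return_lb} at level $\ell(n)$ therefore gives information about $\pr(T>n/(8m_*))$, not about $\pr(T>n)$.

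The fix is to shift the index upward. The paper applies Proposition~\ref{p:return_lb} at $\ell'=\ell(n)+2$: since $n_{\ell+1}/n_\ell=m_{\ell+1}^2/(m_{\ell+1}-1)\ge 4$, one has $r_{\ell'-1}v_{\ell'-1}/4=n_{\ell(n)+1}/4\ge n_{\ell(n)}\ge n$, and now monotonicity does give
\[
\pr(T>n)\ge \pr(T>n_{\ell(n)})\ge \pr\!\Big(T>\tfrac{r_{\ell'-1}v_{\ell'-1}}4\Big)\ge \frac1{62 m_* r_{\ell'}}.
\]
Then $r_{\ell'}=r_{\ell(n)+2}\le 4r_{\ell(n)}\le 8m_* n^{1-\alpha_n}$ yields the constant $1/(500 m_*^2)$. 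Your bookkeeping instincts about the ratios were all correct; the only missing idea is that one must overshoot $n$, not undershoot it, which forces the index shift. Your treatment of the ``combination'' remark is more explicit than the paper's (which simply asserts it) and is fine in spirit, though it could be streamlined once \eqref{e:low} is in hand.
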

\begin{proof}
Since $r_{\ell'-1} v_{\ell'-1}/4\ge r_{\ell'-2} v_{\ell'-2}$, we set $\ell'=\ell(n)+2$ to get
$$
\pr(T>n)\ge \pr(T>n_{\ell(n)}) \ge\pr(T>r_{\ell'-1}v_{\ell'-1}/4)  \ge \frac1{62 m_* r_{\ell'}} \ge  \frac{1}{500 m_*^2}\,  n^{\alphan-1}.$$
since
$$
r_{\ell'}=r_{\ell(n)+2}\le 4r_{\ell(n)} \le 8m_* n^{1-\alphan}.
$$
For the upper bound, we have
 \[\sum_{i=0}^{n}  \pr(T>i)\le \sum_{i=1}^{n_{\ell(n)}} \pr(T>i) \le 2 v_\ell(n)=2n^\alphan.\qedhere \]
\end{proof}

We finish this subsection by showing that one can get good control over the sequence $\{\alphan\}$
by choosing the appropriate degree sequence $\{m_i\}$.

\begin{lemma}\label{l:choosing m}
For any $\gamma\in[1/2,1)$ and any function $f:\R_+\rightarrow \R_+$ satisfying
$f(1)=1$ and the log-Lipshitz condition that for all real $a,n\ge 1$
$$a^{1/2}f(n)\leq f(an)\leq a^\gamma f(n)$$
 there is
a bounded sequence $(m_\ell)$ such that for all $n\geq 1$ we have
$$
{c_\gamma^{-1}}\le \frac{f(n)}{n^{\alpha_n}} \leq 2c_\gamma, \qquad \mbox{ with } c_\gamma=3e^{1/(1-\gamma)}.
$$
\end{lemma}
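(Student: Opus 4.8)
The plan is to build the sequence $(m_\ell)$ greedily, one term at a time, so that the running product $v_\ell = m_1\cdots m_\ell$ tracks the target profile. The key point is that $\alpha_n$ is defined through $v_{\ell(n)}$, where $\ell(n) = \min\{\ell : n_\ell \ge n\}$ and $n_\ell = r_\ell v_\ell \asymp v_\ell^{?}$; so controlling $v_\ell$ as a function of $n_\ell$ controls $n^{\alpha_n}$. It is cleanest to prescribe, at each stage, the ratio $\log v_\ell / \log n_\ell$ we want — call it a target exponent $\beta_\ell$ — and to read off the hypotheses on $f$ as saying exactly that the ``ideal'' exponent $\log f(n)/\log n$ lies in $[1/2,\gamma]$ and varies slowly (log-Lipschitz with slope between $1/2$ and $\gamma$). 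First I would record the elementary identities $n_\ell = r_\ell v_\ell$, $r_\ell = \prod (m_i/(m_i-1))$, and the bounds $1 \le r_\ell \le 2^{\#\{i\le\ell: m_i=2\}} \le$ (bounded, if we keep $m_i$ bounded away from $2$ most of the time) — more usefully, $r_\ell$ is squeezed between $1$ and $\prod(1 + 1/(m_i-1))$, so $\log r_\ell = \sum O(1/m_i)$. The boundedness constraint $m_i \le m_*$ forces $\log v_\ell$ to grow at least linearly in $\ell$, hence $r_\ell = v_\ell^{o(1)}$ along the construction; this is what lets $n^{\alpha_n}$ and $v_{\ell(n)}$ agree up to the stated constants.

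The main step is the inductive choice. Suppose $m_1,\dots,m_{\ell-1}$ are chosen, giving $v_{\ell-1}$ and $n_{\ell-1}$. I want to pick $m_\ell \in \{2,\dots,M\}$ (for a fixed bound $M = M(\gamma)$) so that the point $(\log n_\ell, \log v_\ell)$ stays within an absolute additive band of the curve $t \mapsto \log f(e^t)$. Since $\log v_\ell = \log v_{\ell-1} + \log m_\ell$ and $\log n_\ell = \log n_{\ell-1} + \log m_\ell + \log\frac{m_\ell}{m_\ell-1}$, increasing $m_\ell$ by one unit moves us roughly diagonally with slope slightly above $1$ in the $(\log n, \log v)$ plane, in steps of size at most $\log M$. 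The log-Lipschitz hypothesis $a^{1/2} f(n) \le f(an) \le a^\gamma f(n)$ says the target curve has slope in $[1/2,\gamma] \subset (0,1)$, so a diagonal-with-slope-$\approx 1$ polyline with bounded step size can shadow it: whenever we are below the target we take $m_\ell$ large (up to $M$), whenever above we take $m_\ell = 2$, and one checks the band width never exceeds $\log c_\gamma$ with $c_\gamma = 3e^{1/(1-\gamma)}$ — the exponential in $1/(1-\gamma)$ is exactly the geometric-series cost of the gap between the polyline slope and the maximal target slope $\gamma < 1$, i.e.\ $\sum_{k\ge 0}\gamma^k$ type bookkeeping, and the factor $3$ absorbs the $r_\ell$ discrepancy in $n_\ell \asymp v_\ell^{\,\cdot}$ and the rounding from $\ell(n)$ being an integer. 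The bound $M=M(\gamma)$ is chosen large enough that a single step $m_\ell = M$ always overshoots a unit target increment, which is possible precisely because $\gamma<1$.

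Finally I would translate the $(\log n_\ell, \log v_\ell)$ control into the claimed two-sided bound on $f(n)/n^{\alpha_n}$ for \emph{every} real $n \ge 1$, not just $n = n_\ell$. For general $n$, let $\ell = \ell(n)$, so $n_{\ell-1} < n \le n_\ell \le 2m_* n$; then $n^{\alpha_n} = v_\ell$ by definition, and I compare $f(n)$ to $f(n_\ell)$ using the log-Lipschitz bound with $a = n_\ell/n \in [1, 2m_*]$, which costs only a bounded multiplicative factor (absorbed into $c_\gamma$ and the $2$). Combined with $|\log v_\ell - \log f(n_\ell)| \le \log c_\gamma$ from the construction, this gives ${c_\gamma}^{-1} \le f(n)/n^{\alpha_n} \le 2c_\gamma$. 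The hypothesis range $\gamma \in [1/2,1)$ rather than $[3/4,1)$ is correct here and matches the remark in the overview that $p_n$ (equivalently $\alpha_n$) can be adjusted with $1/2$ in place of $3/4$; the $3/4$ threshold only reappears after passing through $\sqrt{n q_n}$.

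The step I expect to be the genuine obstacle is getting the constant $c_\gamma = 3e^{1/(1-\gamma)}$ exactly right — i.e.\ proving the shadowing band is that wide and no wider. This requires a careful discrete Gronwall / geometric-series argument tracking how a unit overshoot at stage $\ell$ decays relative to the moving target (decay rate $\le \gamma$ per ``catch-up'' step), together with bounding the accumulated $\log r_\ell$ contribution, which is $\sum_i \log\frac{m_i}{m_i-1} \le \sum_i \frac{1}{m_i-1}$ and must be controlled using that $\log v_\ell$ grows (so the $m_i = 2$ stretches, which are the only ones contributing $\Theta(1)$ to $r_\ell$, cannot be too dense relative to where we evaluate). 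Everything else — the identities for $r_\ell, v_\ell, n_\ell$, the definition of $\alpha_n$, and the final real-$n$ interpolation — is routine.
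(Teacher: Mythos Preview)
Your overall strategy --- a bang-bang greedy choice $m_\ell\in\{2,M\}$ according to whether you are above or below the target, followed by interpolation from $n_\ell$ to general $n$ --- is exactly the paper's approach. Two points in your plan are overcomplicated and one contains a false (though ultimately harmless) claim.

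First, the statement ``$r_\ell = v_\ell^{o(1)}$'' is wrong: with all $m_i=2$ one has $r_\ell=v_\ell=2^\ell$. Fortunately this claim is also unnecessary, because $n^{\alpha_n}=v_{\ell(n)}$ is an \emph{exact} identity by the definition $\alpha_n=\log v_{\ell(n)}/\log n$, not an approximation requiring control of $r_\ell$. There is no separate bookkeeping of $r_\ell$ to do; it is absorbed into $n_\ell$ automatically.

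Second, the ``discrete Gronwall / geometric-series'' worry is misplaced. The paper simply tracks the ratio $\rho_\ell := f(n_\ell)/v_\ell$ and observes the one-step recursion
\[
\rho_{\ell+1}=\rho_\ell\cdot y,\qquad y=\frac{f\bigl(n_\ell\, m_{\ell+1}^2/(m_{\ell+1}-1)\bigr)}{m_{\ell+1}\, f(n_\ell)},
\]
with the log-Lipschitz hypothesis giving $(m_{\ell+1}-1)^{-1/2}\le y\le (m_{\ell+1}^2/(m_{\ell+1}-1))^\gamma/m_{\ell+1}$. One chooses $m_*$ to be the smallest integer making the upper bound $\le 1$, and then sets $m_{\ell+1}=2$ when $\rho_\ell\le 1$ and $m_{\ell+1}=m_*$ when $\rho_\ell>1$. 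An immediate induction keeps $\rho_\ell\in[(m_*-1)^{-1/2},\,2^{2\gamma-1}]$: there is no accumulation of error at all, since each step either pushes $\rho_\ell$ upward by a bounded factor (when using $m_{\ell+1}=2$) or downward (when using $m_*$). The constant $c_\gamma=3e^{1/(1-\gamma)}$ is nothing more than an explicit upper bound on $m_*$ obtained by noting that $(m^2/(m-1))^\gamma/m\le 1$ once $m\ge e\cdot e^{1/(1-\gamma)}$; the extra factor of $2$ on the upper side and the passage from $n_\ell$ to general $n$ come exactly as you describe in your final paragraph.
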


\begin{proof}
Note that
$$
\frac{f(n_{\ell+1})}{v_{\ell+1}}=\frac{f(n_{\ell})}{v_{\ell}}
y\qquad \mbox{ where } y=\frac{f(n_{\ell}m_{\ell+1}^2/(m_{\ell+1}-1))}{f(n_\ell)}\;\frac1{m_{\ell+1}}.
$$
By the assumptions on $f$ we have
\begin{equation}\label{e:fn}
\frac{1}{\sqrt{m_{\ell+1}-1}} \le y \le \left(\frac{m_{\ell+1}^2}{m_{\ell+1}-1}\right)^\gamma \frac{1}{m_{\ell+1}}
\end{equation}
Set $m_*$ be the smallest integer $m_{\ell+1}$  so that the upper bound at is not more than 1. This depends on $\gamma$ only. Now, for $\ell\ge 1$ inductively choose
$$
m_{\ell+1}=
\begin{cases}
2 & \mbox{if } f(n_\ell)/v_\ell\le 1\\
m_* & \mbox{if } f(n_\ell)/v_\ell> 1
\end{cases}
$$
then \eqref{e:fn} and induction give the inequality
$$
\frac{1}{\sqrt{m_*-1}} \le \frac{f(n_\ell)}{v_\ell} \le 2^{2\gamma-1}, $$
which holds trivially for $\ell=0$.

Equivalently, by definition of $\alpha_n$,  the inequality
$$
\frac{1}{\sqrt{m_*-1}} \le \frac{f(n)}{n^{\alpha_n}}\le 2^{2\gamma-1}<2,
$$
holds at the values $n=n_\ell$.

To get the claim for all integers, recall that $n^{\alpha_n} =
v_{l(n)}$ is constant between $n_\ell$ and $n_{\ell+1}$ while
$f$ is increasing and may change by a factor at most $m_*$ between $n_\ell$
and $n_{\ell+1}$, so for all $n \ge 1$ we get
\[
\frac{1}{\sqrt{m_*-1}} \le \frac{f(n)}{n^{\alpha_n}}<2m_*.
\]
For the value of $c_\gamma$ note that the right hand side of \eqref{e:fn} is at most one when $m_{\ell+1}\ge e\times e^{1/(1-\gamma)}$, and $\lceil e\times e^{1/(1-\gamma)}\rceil \le 3e^{1/(1-\gamma)}$.
\end{proof}

\section{The piecewise mother groups}\label{s:p mother groups}

The Markov chain random walk on the ``assembly line'' introduced in Section \ref{s:assembly} is a random walk on a Schreier graph of a certain group that we will now introduce.

Given a sequence $m=\{m_i\}_{i\geq 1}$  of integers $m_i\geq 2$, we define $\tm$, the spherically symmetric rooted tree with degree sequence $\overline{m}$ to be the following graph. The
vertex set consists of all finite sequences $w_k\cdots w_1$ where $w_i\in\{0,\ldots,m_i-1\}$, where the empty sequence $o$ is the root. The edge set consists of all pairs of vertices of the form $\{w_k\cdots w_1,\,w_{k-1}\cdots w_1\}$.

Let $\Aut(\tm)$ be the set of rooted automorphisms of $\tm$, i.e.  automorphisms fixing the root.
The groups $\mm$ will be subgroups of $\Aut(\tm)$.

Before defining the groups $\mm$ let us say a few words on automorphisms of $\tm$.

Automorphisms $\gamma\in \Aut(\tm)$ can be written as a product
$$
\gamma=\langle \gamma_0,\ldots, \gamma_{m_1-1}\rangle \sigma
$$
where $\sigma\in \text{Sym}(m_1)$ permutes the subtrees of $o$ and $\gamma_i$
are automorphism of the subtrees. Thus the $\gamma_i$ are elements of $\Aut(\ttm)$ where $\ttm$ is the tree with the shifted degree sequence $\tau m:=m_2,m_3,\ldots$. The natural action of the group $\Aut(T_m)$ on infinite strings (the boundary of the tree) $\cdots w_3w_2w_1$ can be defined recursively by $$\cdots w_3w_2w_1.\gamma=(\cdots w_3w_2.\gamma_{w_1}) w_1.\sigma$$

More generally, any automorphism can be written as a product of automorphisms $\gamma_v$ of subtrees of vertices $v$ at level $\ell$ and a permutation permuting vertices at level $\ell$. The $\gamma_v$ are uniquely determined and are called the {\bf sections} of $\gamma$ at $v$.

We will be interested in two special types of automorphisms:
\begin{enumerate}
\item {\bf Permutations:}  automorphisms of the form $\mathbf{\sigma} =\langle id,id,\ldots,id \rangle\sigma$ which simply permute the children of the root according to $\sigma$, with all other sections trivial.
\item {\bf Propagating actions:} For every $\ell \ge 1$ fix some permutation $\sigma_\ell\in \text{Sym}(m_{\ell})$. Then we define
    recursively
    $$
    a_\ell=\langle a_{\ell+1},\sigma_{\ell+1},\cdots,\sigma_{\ell+1}\rangle\,id
    $$
\end{enumerate}
The name "propagating action'' comes from viewing the sections of these automorphisms as "propagating'' from the root along the zero ray, sending permutations to the neighbouring vertices of the tree. To see that the recursive definition does indeed define an automorphism of $\tm$ simply note that the image of each vertex in $\tm$ can be calculated in finitely many steps.

Consider a finite group $H$ of propagating actions with the property that $H^{-1}=H$ and for uniform random $h\in H$ the number $0.\sigma_\ell^{(h)}$ is uniformly distributed on $\{0,\ldots, m_{\ell-1}\}$. This is a variant of automorphisms considered by \cite{BrieusselEntropy}).

A simple example is when $H$ is the set of all propagating actions where $\sigma_\ell$ depends only on $m_\ell$ (e.g. all cyclic shifts of $\{0,\ldots,m_\ell-1\}$); in the rest of the paper we will assume that this is the case; this choice affects only the constants in our bounds.

We define $\mm$ as the group generated by the propagating actions $H$ and the permutations $\text{Sym}(m)$.

We consider the random walk $Y_n$ on $\mm$ where the step distribution is the even mixture of uniform measures on $\text{Sym}(m_1)$ and $H$. By definition of $H$, the random walk $o.Y_n$ on infinite strings (with finitely many nonzero letters) corresponds to the assembly line walk introduced in Section \ref{s:assembly}.

In order to prove an upper bound on the entropy of the walk $X_n$ (which will then translate to a bound on the speed of $X_n$),
it will be useful to introduce a tree structure to the inverted orbit.

Every automorphism $\gamma\in \Aut(\tm)$ induces an action on (infinite) rays in $\tm$.
Thus the inverted orbit $o,o.Y_1^{-1},\ldots
o.Y_n^{-1}$ can be viewed as a sequence of such  rays in the tree $\tm$. Their union is a
subtree of $\tm$ which we call the \textbf{ray tree}. Each ray
in the ray tree ends with an infinite sequence of zeros.
\begin{lemma}\label{l:raytree1}
If a vertex $v$ has exactly one child $wv$ in the ray tree then the section at $wv$ is just a propagating action, and the set of descendants of $wv$ in the ray tree is just the 0-ray from $wv$.
\end{lemma}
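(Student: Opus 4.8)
The plan is to reduce both conclusions to a single statement about sections of the group elements $Y_t^{-1}$. Let $\ell$ be the level of $v$, so $wv$ is at level $\ell+1$, and write $\mathbf 0_k$ for the all‑zero vertex at level $k$. If a ray $o.Y_t^{-1}$ of the inverted orbit passes through $v$, then (as $v$ has only the child $wv$) it passes through $wv$, and since $o$ passes through $\mathbf 0_{\ell+1}$ and the action of $Y_t^{-1}$ is determined levelwise we get $\mathbf 0_{\ell+1}.Y_t^{-1}=wv$; moreover the portion of $o.Y_t^{-1}$ strictly below $wv$ is the all‑zero ray of the subtree at $\mathbf 0_{\ell+1}$ transported by the section $(Y_t^{-1})|_{\mathbf 0_{\ell+1}}\in\mathcal M_{\tau^{\ell+1}m}$. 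Propagating actions fix the all‑zero ray of the tree they act on, so it suffices to prove: for every $t$ with $o.Y_t^{-1}$ through $wv$, the section $(Y_t^{-1})|_{\mathbf 0_{\ell+1}}$ is a propagating action. This yields the second assertion at once, and the first once "the section at $wv$" is read as the section governing the ray below $wv$.

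To analyse $(Y_t^{-1})|_{\mathbf 0_{\ell+1}}$ I would use the cocycle identity $(gh)|_u=g|_u\,h|_{u.g}$ together with two elementary facts about generators of $\mm$: a permutation $\sigma\in\Sym(m_1)$ has trivial section at every vertex of level $\ge 1$, while a propagating action has, at a level‑$k$ vertex $u$, a propagating section in $\tree_{\tau^k m}$ if $u$ is all‑zero, a section in $\Sym(m_{k+1})$ if the shallowest nonzero letter of $u$ sits at depth exactly $k$, and a trivial section otherwise. Expanding $Y_t^{-1}=G_t^{-1}\cdots G_1^{-1}$ this gives $(Y_t^{-1})|_{\mathbf 0_{\ell+1}}=\prod_{i=t}^{1}(G_i^{-1})|_{u_i}$ with $u_i=\mathbf 0_{\ell+1}.(G_t^{-1}\cdots G_{i+1}^{-1})$, and the sequence $(u_t,u_{t-1},\dots)$ evolves precisely as the assembly‑line walk on the first $\ell+1$ letters driven by the reversed increments $G_t^{-1},G_{t-1}^{-1},\dots$. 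By the facts above this product is a propagating action unless some propagating step $i\le t$ has $u_i$ whose shallowest nonzero letter lies at depth exactly $\ell+1$; so the claim becomes: no such "deep'' step occurs.

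The core of the argument is to rule this out using that $v$ has exactly one child, and the natural device is induction on the level $\ell$. If $v=\mathbf 0_\ell$ lies on the $0$‑ray then $wv=\mathbf 0_{\ell+1}$, and since the inverted orbit changes only at permutation steps — and such a change is felt at the top level — one checks directly that the only rays through $\mathbf 0_\ell$ remain on the $0$‑ray, settling this case. For $v$ off the $0$‑ray I would pass to the subtree hanging from the level‑$1$ vertex $(v_1)$ below $v$: the part of the ray tree inside that subtree, with the leading letter deleted, is the ray tree of the inverted orbit of a walk in $\mathcal M_{\tau m}$ on $\ttm$, in which the image of $v$ (now at level $\ell-1$) still has a single child, so the inductive hypothesis gives exactly the claim. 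The base case $\ell=0$ (the root) is direct: a permutation step that does not fix the $0$‑child both replaces the ray below level $1$ and changes the leading letter $0.\mathrm{rootperm}(Y_t^{-1})$, so it exhibits two distinct level‑$1$ vertices occurring in the inverted orbit; hence a root with a single child admits no such step, the section evolution has no $\Sym(m_2)$ factors and no "resets'', and $(Y_t^{-1})|_{(0)}$ is a product of propagating actions for every $t$.

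The step I expect to be the main obstacle is making rigorous the assertion that "the inverted orbit restricted to a level‑$1$ subtree is itself an inverted orbit one storey down'', in a form strong enough to power the induction. The subtlety is that each time the orbit re‑enters the $(v_1)$‑subtree — at a permutation step sending $0$ to some $c\neq 0$ — the governing section is $(Y_{t-1}^{-1})|_{(c)}$, a section of a fresh group element at a vertex other than the all‑zero one, so the restricted object is a priori a concatenation of inverted‑orbit segments with re‑initialised starting points rather than one orbit. One has to show that the sections the ambient walk deposits into the $(v_1)$‑subtree, over all times, assemble into the increments of a single $\mathcal M_{\tau m}$‑walk whose inverted orbit realises exactly the restricted ray tree — equivalently, that any such re‑initialisation is witnessed by a branching at $v$ or at an ancestor — while carefully tracking how a top permutation relabels which level‑$1$ subtree currently plays the role of "the $0$‑subtree''. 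Once this bookkeeping is in place the induction closes and both clauses follow.
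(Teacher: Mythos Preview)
Your route can probably be pushed through, but it is considerably more involved than what the paper does, and the obstacle you flag is created by your choice of viewpoint rather than being intrinsic to the lemma.

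The paper does not track $(Y_t^{-1})|_{\mathbf 0_{\ell+1}}$ for varying $t$, nor does it induct on the level. It works once, at $v$ (not at $wv$ or $\mathbf 0_{\ell+1}$), with the section of the \emph{word} $Y_n$ there. By the cocycle, $(Y_n)|_v = (G_1)|_v\,(G_2)|_{v.G_1}\cdots (G_n)|_{v.Y_{n-1}}$, and by the very generator facts you record, every factor is trivial, a permutation in $\Sym(m_{|v|+1})$, or a propagating action on $\tree_{\tau^{|v|}m}$. Collapsing runs yields an alternating word $s_v=\sigma_1 a_1 \sigma_2 a_2\cdots\sigma_k$. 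Now each $0.\sigma_j^{-1}\cdots\sigma_1^{-1}$ is a child of $v$ in the ray tree (it is the $(\ell{+}1)$-st coordinate of $o.Y_t^{-1}$ at the time just before the factor $a_j$ appears, a moment at which $v.Y_t=\mathbf 0_\ell$). A single child $w$ forces all of these to equal $w$, whence $0.\sigma_2=\cdots=0.\sigma_{k-1}=0$; the section of $s_v$ at $w$ is then simply the product of the sections $(a_j)|_0$, each a propagating action, so $(Y_n)|_{wv}$ is propagating, and the descendants below $wv$ reduce to the $0$-ray. No level induction is needed, and your ``restricted inverted orbit is again an inverted orbit one storey down'' worry dissolves: the prefixes of the section word $(Y_n)|_v$ are exactly the $(Y_t)|_v$, so that bookkeeping is built in.

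One further remark on your formulation: ``the section at $wv$'' in the lemma refers to $(Y_n)|_{wv}$, whereas your key claim speaks only of $(Y_t^{-1})|_{\mathbf 0_{\ell+1}}=((Y_t)|_{wv})^{-1}$ for those $t$ at which the inverted orbit actually passes through $wv$. If $o.Y_n^{-1}$ itself misses $wv$ you would still owe an extra argument; the paper's computation in terms of $s_v$ handles this uniformly.
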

\begin{proof}
Let $|v|$ denote the distance of $v$ from $o$ in the graph metric.
Consider the section at $v$ of the word $Y_n$, written in alternating form $s_v = \sigma_1 a_1 \sigma_2 a_2 \sigma_3 a_3 .. \sigma_k$, where  $\sigma_i\in \text{Sym}(m_{|v|+1})$ and the $a_i$ are propagating actions in the descendant subtree of $v$.

Since $v$ has only one child $wv$ in the ray tree, we have
$$
w=0.\sigma_1^{-1} = 0.\sigma_2^{-1}\sigma_1^{-1} = ... = 0.\sigma_{k-1}^{-1}\cdots \sigma_1^{-1}
$$ This implies that  $0.\sigma_2 = 0.\sigma_3 = 0.\sigma_{k-1} =0$, so the section at $wv$ is just $a_1a_2a_3..a_{k-1}$, a propagating action. The set of descendants of $v$ in the ray tree is exactly the union of rays $(o.\sigma_1^{-1})v, (o.\sigma_2^{-1}\sigma_1^{-1})v,\dots, (o.\sigma_{k-1}^{-1}\cdots \sigma_1^{-1})v$, i.e. the ray $\cdots 0000wv$, as required.
\end{proof}

We can now show the following claim:
\begin{lemma}\label{l:number of ray trees}
The number of ray trees with with $r$ infinite rays is at most
$(m_*+1)^{6r}$.
\end{lemma}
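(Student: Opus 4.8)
The plan is to encode each ray tree by a bounded amount of combinatorial data per ray, so that the total number of choices is exponential in $r$. A ray tree is a finite subtree of $\tm$ that is the union of $r$ rays, each ending in an eventually-zero sequence. The key structural input is Lemma~\ref{l:raytree1}: wherever the ray tree has no branching, it is forced to follow the $0$-ray and the sections are just propagating actions, so the only ``free'' information sits at the branch vertices and at the finitely many coordinates where a ray departs from $0$. First I would identify the \textbf{branch points} of the ray tree --- vertices with at least two children in the ray tree. A tree with $r$ leaves (equivalently, here, $r$ infinite rays) has at most $r-1$ branch points, and the branch points together with the root and the ray-ends organize the ray tree into at most $2r-1$ edge-disjoint ``segments,'' each of which is a geodesic path in $\tm$ between two consecutive branch/root/end points.

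Next I would bound the number of ways to realize each segment. Along a segment, at every level the path either continues along the $0$-letter (by Lemma~\ref{l:raytree1}, forced once there is no branching) or makes a ``turn'' corresponding to one of at most $m_*$ choices of the next letter; moreover a turn can occur only finitely often, and in fact the combinatorics of the Gray-code/assembly-line structure from Section~\ref{s:assembly} shows that the number of non-zero letters appearing along all segments is itself controlled. Rather than track levels explicitly, I would bound a segment by its endpoint data: given the branch structure (a plane tree with $r$ leaves, of which there are at most $4^r$ by the Catalan bound), each segment's shape is determined by which child-index in $\{0,\ldots,m_*-1\}$ is taken at its initial vertex and by the location and value of the finitely many subsequent turns. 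The crucial point, which I would extract from Lemma~\ref{l:raytree1} applied repeatedly, is that below any branch point that has become non-branching the tree is completely rigid (the $0$-ray), so each segment contributes at most a constant (depending on $m_*$) times a bounded number of letter-choices; packaging $r$ segments gives a bound of the form $C^r$ with $C$ a function of $m_*$ only, and choosing constants generously yields $(m_*+1)^{6r}$.

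Concretely, the counting I would carry out is: (i) at most $4^{r} \le (m_*+1)^{2r}$ choices for the underlying plane tree recording the branch structure and the cyclic order of rays; (ii) at each of the $\le r-1$ branch vertices, at most $m_*^{m_*}$ (a constant) ways to specify which children are used, absorbed into $(m_*+1)^{2r}$ after adjusting constants; (iii) along each of the $\le 2r$ segments, the geodesic in $\tm$ between its endpoints is determined by its initial letter ($\le m_*$ choices) plus the rigidity from Lemma~\ref{l:raytree1}, giving $\le m_*^{2r} \le (m_*+1)^{2r}$ in total. Multiplying, $(m_*+1)^{2r}(m_*+1)^{2r}(m_*+1)^{2r} = (m_*+1)^{6r}$.

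The main obstacle I anticipate is making step (iii) fully rigorous: one must verify that a segment between two branch points really is pinned down by only a bounded amount of data, i.e.\ that the ``turns'' along a segment cannot accumulate. This is exactly where Lemma~\ref{l:raytree1} does the work --- once a vertex along a putative segment fails to branch, all its further descendants in the ray tree form the bare $0$-ray and the segment ends or continues rigidly --- but stating it cleanly requires being careful about the distinction between the tree metric levels and the segment decomposition, and about the fact that a single ray can pass through several branch points. A secondary nuisance is bookkeeping the constant: I would not try to optimize it, and would simply note that any fixed base of the form (constant in $m_*$)$^{O(r)}$ can be rewritten as $(m_*+1)^{6r}$ since $m_*\ge 2$, which gives enough room.
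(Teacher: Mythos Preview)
Your approach has the right shape---reduce the ray tree to a finite combinatorial core using Lemma~\ref{l:raytree1}, then count---but the counting in step~(ii) breaks down. You allot $m_*^{m_*}$ choices per branch vertex and assert this is ``absorbed into $(m_*+1)^{2r}$ after adjusting constants''; over $r-1$ branch vertices this would require $(m_*^{m_*})^{r-1}\le (m_*+1)^{2r}$, i.e.\ roughly $m_*\log m_*\le 2\log(m_*+1)$, which already fails for $m_*=3$. The same slip recurs in your closing remark that any base which is ``a constant in $m_*$'' raised to $O(r)$ can be rewritten as $(m_*+1)^{6r}$: a base like $2^{m_*}$ or $m_*^{m_*}$ cannot. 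The repair is to count \emph{edges} rather than per-vertex subsets of children: the finite core has $O(r)$ edges and each edge carries a single label in $\{0,\dots,m_*-1\}$, giving $m_*^{O(r)}$ directly with no $m_*$ hidden in the exponent of the base.

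Separately, the obstacle you flag in step~(iii)---that turns along a segment might accumulate---is not actually an obstacle, and seeing why also shows how to streamline the whole argument. Lemma~\ref{l:raytree1} says that below any lone child the ray tree is the bare $0$-ray; in particular no branch point can lie below a lone child, so two comparable branch points are necessarily adjacent in $\tm$. There are no long segments between branch points to worry about. This is precisely what the paper exploits: it prunes each ray immediately after the first lone child, observes that in the resulting finite tree every vertex except the $r$ leaves and their $r$ parents has at least two children (forcing the vertex count $i\le 3r-1$), and then encodes the pruned tree by a depth-first walk of length at most $2i\le 6r$ with steps in $\{0,\dots,m_*\}$ (child-labels together with one symbol for ``go to parent''). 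This single encoding records the plane-tree shape and the child-labels simultaneously, so no separate Catalan count or per-vertex subset choice is needed, and one lands exactly on $(m_*+1)^{6r}$.
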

\begin{proof}
Prune the ray tree at each ray after the first lone child. In other words, if $v$ is the first vertex in a ray to have only one child $u$ in the ray tree, then $u$ is in the pruned tree but its descendants are not.
Note that the pruned tree determines the ray tree: the ray tree is given by adding $0$-rays to the leaves of the pruned tree.

The pruned tree is finite;
let $i$ be the number of its vertices.
Since we pruned each ray at the first lone child, any vertex in the
pruned tree save for the leaves and their parents has at least two
children. If the total number of leaves is $r$, then the total number of children is at least $r+(i-2r)*2$, we get the inequality
$i\le 3r-1$.

The tree can be described by a path that visits every vertex and passes through every edge at most twice; such a path can be described by $2i$ numbers between $0$ and $m_*$ giving a list of directions which neighbor to go to next. The number of such lists is at most $(m_*+1)^{6r}$.
\end{proof}

\begin{lemma}\label{l:nr group elements per ray tree}
The number of group elements for which there are words with a given
ray tree with $r$ rays is at most $(m_*!)^{3m_*^2r}$.
\end{lemma}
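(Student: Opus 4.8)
The plan is to encode each such group element $g=Y_n$ by a bounded amount of data attached to the pruned tree of the proof of Lemma~\ref{l:number of ray trees}, and then count the encodings. Recall that the pruned tree $\mathcal P$ is obtained from the ray tree by cutting each ray right after its first lone child; it has $i\le 3r-1$ vertices, at most $r$ of which are leaves, and each leaf $u$ is the unique ray-tree child of its parent, so Lemma~\ref{l:raytree1} applies: the section $g|_u$ of $g=Y_n$ at $u$ is a propagating action, and the descendants of $u$ in the ray tree are exactly the zero ray from $u$.

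The encoding I would use is: for every non-leaf vertex $v\in\mathcal P$, record the restriction of the section $g|_v$ to depth one, i.e.\ the local permutation $\pi_v(g)\in\Sym(m_{|v|+1})$ together with the local permutations $\pi_c(g)$ at the children $c$ of $v$; and for every leaf $u\in\mathcal P$, record the propagating action $g|_u$. The claim is that this data, together with the ray tree, determines $g$.

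The key step is to justify this claim, for which I would use the section formula for a product, $g|_v=\prod_{i=1}^n G_i|_{\,v.\rho_{i-1}}$, where $\rho_{i-1}$ is the permutation of level-$|v|$ vertices induced by $Y_{i-1}$, so $v.\rho_{i-1}$ is the image of $v$ under $Y_{i-1}$. Each generator $G_i$ is either a root permutation, whose sections below the root are trivial, or a propagating action, whose portrait is supported on the vertices $c0^\ell$ with $c\ne 0$ (where the section is a permutation automorphism) together with the zero ray (where the section is again a propagating action, with trivial local permutation). Hence the $i$-th factor above is nontrivial only when $v.\rho_{i-1}$ lies on the zero ray or is a child of a zero-ray vertex; applying $Y_{i-1}^{-1}$, this forces $v$ to be a prefix of $o.Y_{i-1}^{-1}$ or a child of such a prefix, so $v$ is a vertex of the ray tree or a child of one. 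Moreover, if $v$ is a child of a ray-tree vertex but is not itself in the ray tree, then every nontrivial factor is a permutation automorphism at level $|v|$, so $g|_v$ is a permutation automorphism, entirely determined by $\pi_v(g)$. Together with Lemma~\ref{l:raytree1}, which controls the portrait of $g$ strictly below each leaf of $\mathcal P$ through the recorded propagating action $g|_u$, the recorded data pins down the local permutation of $g$ at every vertex, hence determines $g$.

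Finally, the count. Each local permutation lies in some $\Sym(m_j)$ with $m_j\le m_*$, giving at most $m_*!$ choices; the depth-one restriction of $g|_v$ at a non-leaf involves $1+m_{|v|+1}\le 1+m_*$ such permutations; and each $g|_u$ is one of at most $|H|\le m_*!$ propagating actions, since in our construction $\sigma_\ell$ depends only on $m_\ell$ and the $m_\ell$ take boundedly many values. With at most $i\le 3r-1$ non-leaf vertices and at most $r$ leaves, the number of encodings is at most $(m_*!)^{(1+m_*)(3r-1)+r}\le (m_*!)^{(3m_*+4)r}\le (m_*!)^{3m_*^2 r}$, the last step using $3m_*+4\le 3m_*^2$ for $m_*\ge 2$. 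The main obstacle is the middle paragraph: pinning down the support of the portrait of $g$, where the section-of-a-product formula and the geometric observation that propagating actions act only near the zero ray do all the work; the rest is bookkeeping.
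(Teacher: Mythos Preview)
Your argument is correct and is essentially the paper's proof in different packaging. The paper passes from the pruned tree $\mathcal P$ to the smallest \emph{full} subtree $T''\supset\mathcal P$ (every vertex has either all its $\tm$-children or none) and then records the reduced section (root permutation) at each internal vertex of $T''$ together with the full section at each leaf of $T''$; your ``$\pi_v(g)$ and $\pi_c(g)$ for all children $c$ of each non-leaf $v\in\mathcal P$, plus $g|_u$ at each leaf $u$'' is precisely the same data, since the children of non-leaf vertices of $\mathcal P$ are exactly the added vertices of $T''$. Your middle paragraph, deriving via the section-of-a-product formula that (i) $g|_v=\mathrm{id}$ whenever the parent of $v$ is outside the ray tree and (ii) $g|_v$ is a pure permutation automorphism when $v$ is a non-ray-tree child of a ray-tree vertex, is actually more explicit than the paper, which simply asserts that sections at leaves of $T''$ are permutations or propagating actions.

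One correction: the bound $|H|\le m_*!$ is valid only in the cyclic-shift example. In the setting the paper fixes (all propagating actions with $\sigma_\ell$ depending only on $m_\ell$), the correct bound is $|H|\le 2!\,3!\cdots m_*!\le (m_*!)^{m_*-1}$, which is what the paper uses. This does not spoil your final inequality: replacing the leaf contribution $r$ by $(m_*-1)r$ in your exponent gives at most $(1+m_*)(3r-1)+(m_*-1)r\le (4m_*+2)r\le 3m_*^2 r$ for $m_*\ge 2$.
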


\begin{proof}
For a group element $g$ its section at a vertex determines all the sections on the descendant of the vertex. Define the reduced section at a vertex $v$  as the permutation part of the section $a$ in its decomposition $\gamma=\langle \gamma_0,\ldots ,\gamma_{m-1} \rangle \sigma$.

A finite rooted subtree of $\tm$ is called full if each vertex either has maximal number of children or no child.

Given a rooted full subtree of $\tm$, the sections of $g$ at the leaves together with the reduced sections at the rest of the subtree determine the action of $g$ on $\tm$.

Now given a word, consider the pruned version $T'$ of its ray tree. Let $T''$ be the smallest full subtree of $\tm$ containing $T'$. This is called the minimal tree in \cite{BrieusselEntropy}.

By Lemma \ref{l:raytree1} the sections at the leaves of $T''$ are either permutations or propagating actions. The number of vertices of $T''$ is at most $m_*$ times the number of vertices in $T'$, which is at most $3r$ by Lemma \ref{l:number of ray trees}.

The total number of permutations together with propagating actions is bounded above by $m_*! + 2!3!\cdots m_*!\le (m_*!)^{m_*}$. So the total number of elements for which there is a word with a given ray tree is bounded above by $(m_*!)^{3m_*^2r}$.
\end{proof}

The next proposition contains an upper bound on the entropy of the walk on
$\mm$. The same bound with a multiplicative error term of $n^{o(1)}$ was first proved by \cite{BrieusselEntropy} using techniques from \cite{BKN}.

We take a different approach, describing an element $Y_n$ through the inverted orbit of the random walk word and the sections of $Y_n$.

This is similar to the approach in \cite{BrieusselEntropy}, where a slightly different tree is used to describe a word. The ray-tree description of the inverted orbit together with our tight estimates on
the expected size of the inverted orbit allow us to get a tighter
bound.

\begin{corollary}\label{c:entropies} The conditional entropy satisfies
\begin{eqnarray}
H(\supp Q_n\;|\;|Q_n|)&\le& 6\log(m_*+1)\ev |Q_n|, \notag
\\
H(\supp Q_n)&\le& 6\log(m_*+1)\ev |Q_n|+\log(n+1), \notag
\\
H(Y_n\;|\;|Q_n|)&\le& 5\ev|Q_n| m_*^3\log(m_*), \notag
\\
H(Y_n)&\le& 5  m_*^3\log(m_*)\ev|Q_n| + \log(n+1). \notag
\end{eqnarray}
\end{corollary}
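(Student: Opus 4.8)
The plan is to bound each entropy by encoding the relevant random object in terms of the ray tree and then applying the combinatorial counting lemmas together with the bound on $\ev|Q_n|$. First I would handle $H(\supp Q_n \mid |Q_n|)$. Given $|Q_n|=r$, the support of $Q_n$ is a set of $r$ vertices; but crucially, $\supp Q_n$ is determined by the ray tree, and conversely the ray tree has at most... well, its number of infinite rays is at most $|Q_n|$ since each point of the inverted orbit contributes one ray, so the ray tree has at most $|Q_n|$ rays. By Lemma \ref{l:number of ray trees} the number of ray trees with $r$ rays is at most $(m_*+1)^{6r}$, so the conditional distribution of the ray tree (hence of $\supp Q_n$) given $|Q_n|=r$ is supported on a set of size at most $(m_*+1)^{6r}$; since uniform measure maximizes entropy, the conditional entropy given $|Q_n|=r$ is at most $6r\log(m_*+1)$, and taking expectation over $|Q_n|$ yields $H(\supp Q_n \mid |Q_n|) \le 6\log(m_*+1)\,\ev|Q_n|$. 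The unconditional bound on $H(\supp Q_n)$ then follows from $H(\supp Q_n) \le H(\supp Q_n \mid |Q_n|) + H(|Q_n|)$ together with the fact that $|Q_n|$ ranges over $\{1,\dots,n+1\}$, so $H(|Q_n|)\le \log(n+1)$.

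Next I would treat $H(Y_n \mid |Q_n|)$. The key point is that $Y_n$, as a group element, is not fully recovered from the ray tree, but the extra data needed is exactly the sections at the leaves of the minimal full tree $T''$ of Lemma \ref{l:nr group elements per ray tree} together with the reduced sections on the rest of $T''$; by that lemma, once the ray tree (equivalently the pruned tree $T'$, equivalently $|Q_n|=r$ rays worth of data) is fixed, the number of possible values of $Y_n$ is at most $(m_*!)^{3m_*^2 r}$. Hence given both $|Q_n|=r$ and the ray tree, the conditional entropy of $Y_n$ is at most $3m_*^2 r \log(m_*!) \le 3m_*^3 r \log(m_*)$ using $\log(m_*!)\le m_*\log m_*$; but I should be slightly more careful because $H(Y_n \mid |Q_n|) \le H(Y_n, \text{ray tree} \mid |Q_n|) = H(\text{ray tree}\mid |Q_n|) + H(Y_n \mid \text{ray tree}, |Q_n|) \le 6r\log(m_*+1) + 3m_*^3 r\log m_*$, and one bounds the sum by $5 m_*^3 \log(m_*)\, r$ (absorbing the first term into the constant, valid for $m_*\ge 2$); taking expectation over $|Q_n|$ gives the third inequality. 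The fourth inequality again follows by adding $H(|Q_n|)\le \log(n+1)$.

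The main obstacle I anticipate is getting the constants to work out cleanly, in particular verifying that $6\log(m_*+1) + 3m_*^3\log(m_*) \le 5m_*^3\log(m_*)$ for all $m_*\ge 2$ (this needs $6\log(m_*+1)\le 2m_*^3\log m_*$, which is comfortably true but should be checked at $m_*=2$), and similarly confirming $\log(m_*!)\le m_*\log m_*$ and that the factor-of-$3$ from the pruned tree bound $i\le 3r-1$ in Lemma \ref{l:number of ray trees} propagates correctly through Lemma \ref{l:nr group elements per ray tree}. A secondary point to be careful about is the claim that the ray tree has at most $|Q_n|$ infinite rays: each element $o.Y_i^{-1}$ of the inverted orbit is an infinite ray in the tree, and distinct rays in the ray tree correspond to distinct points of $\supp Q_n$, so the number of rays equals $|\supp Q_n| = |Q_n|$, which is exactly what the counting lemmas take as input. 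Everything else is a direct application of "uniform maximizes entropy" plus the chain rule $H(X,Y)=H(Y)+H(X\mid Y)$ and monotonicity of the counting bounds.
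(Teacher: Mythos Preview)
Your proposal is correct and follows essentially the same route as the paper's proof: bound $H(\supp Q_n\mid |Q_n|)$ via Lemma~\ref{l:number of ray trees} (using that the ray tree has exactly $|Q_n|$ infinite rays and determines $\supp Q_n$), then bound $H(Y_n\mid |Q_n|)$ by first passing through the ray tree and applying Lemma~\ref{l:nr group elements per ray tree}, combining the two counts via the inequality $6\log(m_*+1)+3m_*^2\log(m_*!)\le 5m_*^3\log m_*$, and finally adding $H(|Q_n|)\le\log(n+1)$ for the unconditional versions. The only difference is cosmetic: the paper phrases the $Y_n$ bound as ``multiply the number of ray trees by the number of group elements per tree and take logs,'' while you write out the chain rule $H(Y_n\mid|Q_n|)\le H(\text{ray tree}\mid|Q_n|)+H(Y_n\mid\text{ray tree},|Q_n|)$ explicitly.
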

\begin{proof}
The conditional entropy is the expected entropy of the conditional distribution. However, the ray-tree is determines the support of $Q_n$, and it has $|Q_n|$ infinite rays. Thus the conditional distribution of $\supp|Q_n|$ given $|Q_n|$ is supported on a set of size at most $(m_*+1)^{6|Q_n|}$. The entropy of the conditional distribution is bounded by the logarithm of this. The first claim follows by taking expectations, and the second follows by the trivial upper bound $H(|Q_n|)\le \log(n+1)$.

For the second claim, note that the number of group elements with a word with a given ray tree with $r$ leaves is at most $(m_*!)^{3m_*^2r}$ by Lemma \ref{l:nr group elements per ray tree}, so multiplying by the number of $r$-trees and taking logarithms gives the bound. We used the inequality $6\log(m_*+1)+3m_*^2\log(m_*!)\le 5 m_*^3\log m_*$.

For the last inequality, use the conditional entropy formula and the
fact  that $1\le |Q_n|\le n+1$, so $H(|Q_n|)\le \log(n+1)$.
\end{proof}

\begin{theorem}\label{t:speed m general}
\begin{eqnarray*}
 \ev|X_n| &\ge& \frac{1}{8000m_*^2}n^\alphan
 \underline \lambda\left(\frac{n^{1 - \alphan}}{500m_*^2}\right),
 \\
\ev |X_n| &\le& 6n^\alphan \overline \lambda(n^{1-\alphan})+ 48 m_*^2 n^{\frac{1+\alphan}2}.
\end{eqnarray*}
\end{theorem}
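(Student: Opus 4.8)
The plan is to assemble Theorem~\ref{t:speed m general} from the general permutation-wreath-product bounds of Corollary~\ref{c:spedd_in_pwp}, fed by the return-time estimates of Corollary~\ref{c:return_probs_wreath} and the entropy estimates of Corollary~\ref{c:entropies}. The point is that all the hard work has been done: the left inequality will come from \eqref{e:speedlower} and the right inequality from \eqref{e:speedupper}, with $p$ and $q$ chosen to match the powers of $n$ furnished by the assembly-line analysis.

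For the \textbf{lower bound}, I would apply \eqref{e:speedlower} with the choice $p = \tfrac{1}{500 m_*^2} n^{\alpha_n - 1}$. This is legitimate since by \eqref{e:low} we have $p \le \pr(T > n)$. Plugging in, $np = \tfrac{1}{500 m_*^2} n^{\alpha_n}$ and $1/p = 500 m_*^2 n^{1-\alpha_n}$, so \eqref{e:speedlower} reads
$$
\ev |X_n| \ge \frac{np \underline\lambda(1/p)}{16} = \frac{n^{\alpha_n}}{8000 m_*^2}\, \underline\lambda\!\left(500 m_*^2 n^{1-\alpha_n}\right).
$$
Since $\underline\lambda$ is nondecreasing and $500 m_*^2 \ge 1/(500 m_*^2)$, this is at least $\tfrac{1}{8000 m_*^2} n^{\alpha_n} \underline\lambda\!\left(\tfrac{n^{1-\alpha_n}}{500 m_*^2}\right)$, which is the claimed inequality. (If one prefers, one can just use $\underline\lambda$ monotone to replace the argument by the smaller $n^{1-\alpha_n}/(500m_*^2)$ directly.)

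For the \textbf{upper bound}, I would apply \eqref{e:speedupper} with $q = 2 n^{\alpha_n}$; this is a valid choice because $q \ge \sum_{i=0}^n \pr(T > i) = \ev|Q_n|$ by \eqref{e:up}. Then $n/q = n^{1-\alpha_n}/2 \le n^{1-\alpha_n}$, so $\overline\lambda(n/q) \le \overline\lambda(n^{1-\alpha_n})$ by monotonicity, and the first term of \eqref{e:speedupper} is $3 \overline\lambda(n/q) q \le 6 n^{\alpha_n} \overline\lambda(n^{1-\alpha_n})$. For the second term, I bound $H(Y_n) + H(\supp Q_n) + \ev|Q_n|$ using Corollary~\ref{c:entropies}: this is at most $\bigl(6\log(m_*+1) + 5 m_*^3 \log m_* + 1\bigr)\ev|Q_n| + 2\log(n+1)$, and $\ev|Q_n| \le 2n^{\alpha_n} \le 2n$ while $\log(n+1) \le n$, so the whole bracket is $O(m_*^3 \log m_* \cdot n)$; taking the square root and multiplying by $12\sqrt n$ gives a term of order $m_*^{3/2}(\log m_*)^{1/2} n^{(1+\alpha_n)/2}$. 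Absorbing constants, this is at most $48 m_*^2 n^{(1+\alpha_n)/2}$ — here I am being slightly cavalier about the precise constant; one checks that $12 \sqrt{6\log(m_*+1) + 5m_*^3\log m_* + 3} \cdot \sqrt{2} \le 48 m_*^2$ for all $m_* \ge 2$, which holds comfortably since $m_*^3 \log m_*$ grows far slower than $m_*^4$.

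The \textbf{only real point requiring care} is the constant bookkeeping in the upper bound: one must verify that the combined entropy prefactors from Corollary~\ref{c:entropies}, together with the $\ev|Q_n| \le 2n^{\alpha_n}$ bound and the $\log(n+1) \le n$ crude bound, all fold into the single constant $48 m_*^2$. Since $n^{\alpha_n} \le n$, every term in the radicand is $O(n)$ with an $m_*$-polynomial constant, and the square root converts $n$ to $\sqrt n$, matching the $n^{(1+\alpha_n)/2} \ge n^{1/2}$ target; there is no subtlety beyond checking the inequality between explicit polynomials in $m_*$. Everything else is a direct substitution of Corollary~\ref{c:return_probs_wreath} into Corollary~\ref{c:spedd_in_pwp}.
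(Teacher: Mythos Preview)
Your approach is exactly the paper's: feed Corollaries~\ref{c:return_probs_wreath} and~\ref{c:entropies} into the two sides of Corollary~\ref{c:spedd_in_pwp}; the paper itself just writes ``Combining these inequalities yields the Theorem.'' Your lower bound is clean and correct.

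There is, however, a genuine inconsistency in your upper-bound bookkeeping. You assert that the radicand $H(Y_n)+H(\supp Q_n)+\ev|Q_n|$ is ``$O(m_*^3\log m_*\cdot n)$'' and then that taking square roots ``match[es] the $n^{(1+\alpha_n)/2}$ target.'' It does not: a radicand of order $n$ gives a second term $12\sqrt n\cdot O(\sqrt n)=O(n)$, which is \emph{strictly larger} than $48m_*^2 n^{(1+\alpha_n)/2}$ whenever $\alpha_n<1$. What you actually need---and what your explicit constant check $12\sqrt 2\sqrt{6\log(m_*+1)+5m_*^3\log m_*+3}\le 48m_*^2$ silently assumes---is that the radicand is bounded by a constant times $n^{\alpha_n}$. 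For that you must absorb the two stray $\log(n+1)$ terms from Corollary~\ref{c:entropies} into $n^{\alpha_n}$, i.e.\ verify $\log(n+1)\le n^{\alpha_n}$. This holds because $n^{\alpha_n}=v_{\ell(n)}\ge\sqrt n$: indeed $r_\ell=\prod_{i\le\ell} m_i/(m_i-1)\le\prod_{i\le\ell} m_i=v_\ell$, so $n\le n_{\ell(n)}=r_{\ell(n)}v_{\ell(n)}\le v_{\ell(n)}^2$, and $\log(n+1)\le\sqrt n$ for all $n\ge 1$. Insert that one sentence and your argument is complete.
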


By \cite{LeePeres}, for an infinite group, $\overline \lambda(n)$ is at least a $c \sqrt{n}$. In particular, in the second inequality, the first term is dominant. Thus our bounds are tight up to constant factor, since by \eqref{e:monspeed} we have  $\overline \lambda\le c' \underline \lambda$.

\begin{proof}
Corollary \ref{c:Qn sum return} gives
$$
\ev |Q_n| = \sum_{i=0}^n \pr (T>i)
$$
Corollary \ref{c:return_probs_wreath} gives
\begin{eqnarray*}
\pr(T>n)\ge  \frac{1}{500 m_*^2}\,  n^{\alphan-1}, \qquad \qquad
\sum_{i=0}^{n}  \pr(T>i)\le
2n^\alphan.
\end{eqnarray*}
Corollary \ref{c:entropies} gives
\begin{eqnarray}
H(\supp Q_n)&\le& 6\log(m_*+1)\ev |Q_n|+\log(n+1), \notag
\\
H(Y_n)&\le& 5  m_*^3\log(m_*)\ev|Q_n| + \log(n+1). \notag
\end{eqnarray}
and finally,  Corollary \ref{c:spedd_in_pwp} gives that for any $p\le \pr(T>n)$ and any $q\ge \ev |Q_n|$ we have
\begin{eqnarray*}
\ev |X_n| &\ge& \frac{np\underline \lambda(1/p)}{16}, \\
\ev |X_n|&\le &3\overline \lambda\left(n/q\right)q+12\sqrt{n} \sqrt{H(Y_n) + H(\supp Q_n) + \ev |Q_n| }.\end{eqnarray*}
Combining these inequalities yields the Theorem.
\end{proof}

\begin{theorem}\label{t:entropy m general}
\begin{eqnarray*}
H(X_n) &\ge& \frac{n^{2\alphan-1}\;\underline \lambda^2(n^{1-\alphan})}{2^{30}m_*^4},
\\
H(X_n) &\le&  \left(15 m_*^4 + 2
h(n^{1-\alphan})\right)  n^\alphan.
\end{eqnarray*}
Denote $h_1$ the entropy of the step distribution. Another lower bound, useful for finite or small lamp groups is
\begin{eqnarray*}
H(X_n) &\ge& \frac{h_1}{500 m_*^2}\,n^{\alphan}.
\end{eqnarray*}

\end{theorem}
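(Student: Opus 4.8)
The plan is to derive all three inequalities of Theorem~\ref{t:entropy m general} by plugging the return-time estimates of Corollary~\ref{c:return_probs_wreath} and the entropy estimates of Corollary~\ref{c:entropies} into the general bounds of Corollary~\ref{c:spedd_in_pwp}, exactly in the spirit of the proof of Theorem~\ref{t:speed m general}. The ingredients are: $\pr(T>n)\ge \tfrac{1}{500m_*^2}n^{\alpha_n-1}$; $\ev|Q_n|=\sum_{i=0}^n\pr(T>i)\le 2n^{\alpha_n}$ together with the matching lower bound $\ev|Q_n|\ge c(m_*)n^{\alpha_n}$ from the combination clause of Corollary~\ref{c:return_probs_wreath} (using also $\ev|Q_n|\ge n\pr(T>n)$ directly); and $H(Y_n)+H(\supp Q_n\,|\,|Q_n|)\le 15m_*^4\ev|Q_n|+\log(n+1)$ coming from Corollary~\ref{c:entropies}, where I absorb the $6\log(m_*+1)+5m_*^3\log m_*$ factor into $15m_*^4$ (a crude but valid constant bound) and bound $\log(n+1)$ by, say, $\ev|Q_n|\log(n+1)$ when needed, or keep it separate and note $n^{\alpha_n}\ge 1$.

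For the first (main) lower bound, I would apply the entropy lower bound $H(X_n)\ge \tfrac{np^2\underline\lambda^2(1/p)}{4096}$ from Corollary~\ref{c:spedd_in_pwp} with the choice $p=\tfrac{1}{500m_*^2}n^{\alpha_n-1}\le\pr(T>n)$. This gives $np^2=\tfrac{1}{(500m_*^2)^2}n^{2\alpha_n-1}$ and $1/p=500m_*^2 n^{1-\alpha_n}\ge n^{1-\alpha_n}$, so by monotonicity of $\underline\lambda$ we get $\underline\lambda(1/p)\ge\underline\lambda(n^{1-\alpha_n})$; collecting the constants $\tfrac{1}{4096\cdot(500m_*^2)^2}$ and checking it is at least $2^{-30}m_*^{-4}$ yields the claimed bound. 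For the upper bound, I would use $H(X_n)\le H(Y_n)+H(\supp Q_n\,|\,|Q_n|)+2q(h(n/q)+\log(n+1))$ with $q=2n^{\alpha_n}\ge\ev|Q_n|$. Since $x\mapsto xh(n/x)$ is increasing (Lemma~\ref{l:concave}) and $h\le h_1 n$ crudely — or better, since $h$ is concave and $n/q\ge n^{1-\alpha_n}/2$ — I bound $2q\,h(n/q)\le 4n^{\alpha_n}h(n^{1-\alpha_n})$ up to a constant absorbed into the coefficient of $h$; the $2q\log(n+1)$ term and $H(Y_n)+H(\supp Q_n)$ are each $O(m_*^4 n^{\alpha_n})$ after using $\ev|Q_n|\le 2n^{\alpha_n}$ and bounding $\log(n+1)$ against $n^{\alpha_n}$, which holds since $\alpha_n$ is bounded below away from $0$ (indeed $v_\ell\to\infty$); this gives the $(15m_*^4+2h(n^{1-\alpha_n}))n^{\alpha_n}$ form.

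For the final lower bound, useful when $\Lambda=\Z_2$ or is small, I cannot use $\underline\lambda$ (which may be tiny), so instead I go back to the definition $H(X_n)=H(\mathcal L_n,Y_n)\ge H(\mathcal L_n\,|\,Q_n)$ and recall that, given $Q_n$, each of the $|\supp Q_n|$ lamps carries an independent copy of the $\widetilde Q_n(s)$-step walk on $\Lambda$; in particular $H(\mathcal L_n\,|\,Q_n)\ge \ev\sum_{s}h(\widetilde Q_n(s))\ge \ev|\supp Q_n|\cdot h_1$ once one knows $\widetilde Q_n(s)\ge 1$ for $s\in\supp Q_n$ and that $h(1)=h_1\le h(k)$ by monotonicity of entropy of the $\Lambda$-walk (here using that a single step already has entropy $h_1$ and adding steps does not decrease it). Then $\ev|\supp Q_n|=\ev|Q_n|\ge n\pr(T>n)\ge\tfrac{1}{500m_*^2}n^{\alpha_n}$ finishes it. I expect the only real friction to be bookkeeping: verifying the explicit numerical constants ($2^{30}$, $15$) dominate the accumulated products of $500$, $4096$, $6\log(m_*+1)$, $5m_*^3\log m_*$, etc., and making sure the stray $\log(n+1)$ terms are legitimately absorbed into $n^{\alpha_n}$ using a uniform lower bound on $\alpha_n$; none of this is conceptually hard, it just needs care.
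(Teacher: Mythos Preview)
Your plan is essentially the paper's own proof: feed the return-time estimates of Corollary~\ref{c:return_probs_wreath} and the entropy estimates of Corollary~\ref{c:entropies} into the general bounds of Corollary~\ref{c:spedd_in_pwp}, and for the last inequality use $H(X_n)\ge H(\mathcal L_n\mid Q_n)\ge h_1\,\ev|Q_n|$ together with $\ev|Q_n|\ge (n+1)\pr(T>n)$. The first lower bound and the third inequality go through exactly as you describe.

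There is, however, a gap in your upper-bound bookkeeping. You claim that the term $2q\log(n+1)$ is $O(m_*^4\,n^{\alpha_n})$, but with $q=2n^{\alpha_n}$ this term equals $4n^{\alpha_n}\log(n+1)$, and no inequality of the form $\log(n+1)\le C\,m_*^4$ holds uniformly in $n$. Bounding $\log(n+1)\le n^{\alpha_n}$ (which is indeed true since $\alpha_n\ge 1/2$) only yields $4n^{2\alpha_n}$, which is the wrong direction. The paper deals with this by splitting on the size of $\Lambda$: for \emph{finite} $\Lambda$ it appeals instead to the bound~\eqref{e:finitelamp}, namely $H(X_n)\le H(Y_n)+\ev|Q_n|\log|\Lambda|+H(\supp Q_n)$, which contains no $q\log(n+1)$ term at all and delivers the stated inequality directly; for \emph{infinite} $\Lambda$ it uses the general bound you wrote down. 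In the infinite case the extra $\log(n+1)$ has to be absorbed into the $h(n^{1-\alpha_n})$ term, which is legitimate once one knows $h(k)\ge c\log k$ for a generating walk on an infinite group and that $1-\alpha_n$ is bounded below in terms of $m_*$; but this is a separate argument, not the ``$\log(n+1)\le n^{\alpha_n}$'' comparison you invoke. So your route is right, but the absorption you sketched for that one term does not work as written; replace it by the case split the paper uses.
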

\begin{proof}
By Corollary \ref{c:spedd_in_pwp} for every $p\le \pr(T>n)$ and for every $q\ge \sum_{i=0}^n \pr (T>i)$ we have
\begin{eqnarray*}
H(X_n) &\ge& \frac{np^2\underline \lambda^2(1/p)}{4096},
\\
H(X_n) &\le&  H(Y_n) + H(\supp Q_n|\, |Q_n|) + 2
q\left(h\left(n/q\right) + \log (n+1)\right).
\end{eqnarray*}
And for finite $\Lambda$  by  (\ref{e:finitelamp}) we have
$$ H(X_n) \le H(Y_n) + q\log|\Lambda| + H(\supp Q_n). $$
Using the first upper bound for infinite $\Lambda$, and the second for finite $\Lambda$, the first two inequalities of the Theorem follow just as in Theorem \ref{t:speed m general}. For the last inequality, note that for $X_n=(\mathcal L_n,Y_n)$ we have
$$
H(X_n)\ge H(\mathcal L_n| Q_n) \ge h_1 \ev Q_n
$$
where we used the monotonicity of entropy.
\end{proof}

\begin{proof}[Proof of Theorems \ref{t:main speed} and \ref{t:main entropy}.]
Theorem \ref{t:main entropy} follows directly from Lemma \ref{l:choosing m} and Theorem \ref{t:entropy m general} for finite lamp groups, since $h(\cdot)$ in this case is bounded by a constant.

For Theorem \ref{t:main speed},
 we could use Theorem \ref{t:maingeneral}, but in this case there are more direct arguments. Note that given such $f$, $\tilde f(n)=f^2(n)/n$ satisfies the conditions of Lemma \ref{l:choosing m}. The theorem now follows from Theorem \ref{t:speed m general}, and the fact that simple random walk on $\mathbb Z$ satisfies
\[
\frac{2}{\pi} \sqrt{k}\leq \underline \lambda(k) = \overline \lambda(k)  \le \sqrt{k}. \qedhere
\]\end{proof}

\begin{remark}
By following the constants throughout the theorems, we get the following explicit  constants for the bounds speed and entropy of the groups constructed in the main theorems. Recalling that in lemma \ref{l:choosing m} one may take $m_*\leq C_\gamma = 3e^\frac{1}{1-\gamma}$.\\
For Theorem \ref{t:main entropy} we get
$$
 \frac{\log 2}{1000 C_\gamma^3}f(n) \leq H(X_n) \leq 16 C_\gamma^{9/2} f(n) \\
$$
and for  Theorem \ref{t:main speed}:
$$
 \frac{1}{2^{19} C_{2\gamma-1}^{7/2}}f(n) \leq E|X_n| \leq  49C_{2\gamma-1}^{9/4} f(n)
 $$
The constants can be improved by a more careful calculation. There are also other points in the proof were more careful choices would improve the constants, such  as  choosing a different set of generators for the base group and noticing the groups constructed actually have a degree sequence consisting of $2$ and $m_*$ only.
\end{remark}

\section{Open questions}

First, it would be nice to get the missing exponents; let us conjecture a positive answer to the question of Vershik and Peres:
\begin{conjecture} For every $\beta\in (1/2,3/4)$ there exists a random walk on a finitely generated group with
$$
\ev |X_n|\asymp n^\beta.
$$
\end{conjecture}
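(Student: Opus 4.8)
The plan is to realize the conjecture once more as a permutation wreath product $\mathbb Z\wr_S G$, pushing the base action into a regime of recurrence that the assembly line cannot reach. By Theorem~\ref{t:maingeneral} applied with $\Lambda=\mathbb Z$ (for which a generating walk has $\lambda(k)\asymp\sqrt k$) one gets $\ev|X_n|\asymp np_n\lambda(1/p_n)\asymp n\sqrt{p_n}$, where, by Corollary~\ref{c:Qn sum return}, $p_n$ is of the order of $\pr(T>n)$ and the expected inverted orbit size is $q_n=\sum_{i=0}^n\pr(T>i)$. So it would be enough to produce a finitely generated amenable group $G$ acting on a rooted set $(S,o)$ whose Schreier graph carries a random walk with first return time satisfying $\pr(T>i)\asymp i^{-(1+a)/2}$ for a prescribed $a\in(0,1)$; then $q_n\asymp n^{(1-a)/2}$ and $\ev|X_n|\asymp n^{(3-a)/4}$, and as $a$ sweeps $(0,1)$ the exponent $\beta=(3-a)/4$ sweeps all of $(1/2,3/4)$.

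The first real task is to see why this is out of reach of the present groups and what must change. For the assembly line the walk projects to a birth-and-death chain on $\mathbb N$ whose vertex weights $\prod_i(m_i-1)^{b_i}$ are nondecreasing along the Gray code; by~\eqref{e:res} the effective resistance and the volume out to scale $2^\ell$ are $r_\ell=\prod_i m_i/(m_i-1)$ and $v_\ell=\prod_i m_i$, and since every $m_i\ge2$ one always has $r_\ell\le v_\ell$, hence $q_n\asymp n^{\alpha_n}$ with $\alpha_n\ge1/2$ and $\ev|X_n|\succeq n^{3/4}$. Passing below $3/4$ forces $q_n\prec\sqrt n$, i.e.\ a Schreier graph on which the walk is \emph{more recurrent than $\mathbb Z$}, equivalently one on which effective resistance grows strictly faster than volume. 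Such geometry cannot come from automorphisms of a spherically symmetric tree with integer branching; what is needed instead is a metrically ``long and thin'' Schreier graph -- morally a chain of finite blocks of growing size joined by increasingly sparse bridges, so that after collapsing blocks one obtains a birth-and-death chain whose effective conductances decay polynomially. Natural candidates for building such a thing are bounded-activity automaton groups of higher complexity (for instance Hanoi-towers type actions, or the degree-zero/degree-one families of~\cite{BKN, AAV}, whose Schreier graphs can have spectral dimension strictly between $1$ and $2$), or a base group that is itself a permutation wreath product, giving a diagonal or iterated construction in which the slow decay of the return probability is manufactured scale by scale rather than inherited from a tree. I expect this geometric realization to be the main obstacle: everything downstream is essentially already present in this paper, whereas producing a finitely generated amenable group with a ``sub-$\mathbb Z$'' Schreier graph of prescribed recurrence type is the heart of the matter.

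Granting such a $G$, the rest should go through as in Sections~\ref{s.speed in pw}--\ref{s:p mother groups}. One verifies hypothesis~\eqref{e:cond1} of Theorem~\ref{t:maingeneral}: its right-hand side $c\,np_n$ is of order $q_n\asymp n^{(1-a)/2}$, so it suffices to know that $\ev|Y_n|=o(q_n)$ (a slow-word-length property of the base walk, to be checked for $G$ exactly as the corresponding bound is checked for $\mm$), that the entropy of the inverted orbit as a set is $O(q_n)$ (a ray-tree, or minimal-tree, estimate in the spirit of Lemma~\ref{l:number of ray trees} and Corollary~\ref{c:entropies}), and that $\sum_{i=0}^n p_i\asymp q_n$, which is automatic. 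Theorem~\ref{t:maingeneral} then yields $\ev|X_n|\asymp np_n\lambda(1/p_n)\asymp n^{(3-a)/4}$, and varying $a$ covers every $\beta\in(1/2,3/4)$; if the new family admits a tunable degree-type parameter with log-Lipschitz flexibility as in Lemma~\ref{l:choosing m}, one would in fact obtain $\ev|X_n|\asymp f(n)$ for a class of functions $f$ extending into the new exponent range.
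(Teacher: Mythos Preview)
The statement is a \emph{conjecture} in the paper's open-questions section; the paper offers no proof, so there is nothing to compare your attempt to. What you have written is a strategy sketch, and you yourself flag that the central step --- constructing a finitely generated $G$ with a Schreier graph of prescribed ``sub-$\mathbb Z$'' recurrence --- is missing. That alone means this is not a proof.

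But there is a more concrete obstruction you have overlooked, and it shows that the route through Theorem~\ref{t:maingeneral} cannot work in this range at all. Condition~\eqref{e:cond1} requires in particular $\ev|Y_n|\le c\,np_n$. In your target regime $p_n\asymp n^{-(1+a)/2}$ with $a\in(0,1)$, so $np_n\asymp n^{(1-a)/2}=o(\sqrt n)$. However $Y_n$ is a symmetric finitely-supported generating random walk on an infinite group $G$, and the Lee--Peres diffusive lower bound (invoked in the paper right after \eqref{e:monspeed} and in the proof of Theorem~\ref{t:maingeneral}) gives $\ev|Y_n|\ge c\sqrt n$ on any infinite vertex-transitive graph. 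Thus \eqref{e:cond1} \emph{cannot} hold once $q_n=o(\sqrt n)$, regardless of how cleverly $G$ is built; your claim that ``$\ev|Y_n|=o(q_n)$ \ldots\ to be checked for $G$ exactly as the corresponding bound is checked for $\mm$'' is precisely the step that fails. (For $\mm$ the check succeeds only because there $q_n\succeq\sqrt n$.) Changing the lamp group does not help: with $\lambda(k)\asymp k^\theta$ and the constraint $np_n\ge c\sqrt n$ forced by \eqref{e:cond1}, one computes $np_n\lambda(1/p_n)\ge c\,n^{3/4}$ for every admissible choice. So the $3/4$ barrier is intrinsic to the permutation-wreath-product framework of this paper, not merely to the assembly-line groups, and resolving the conjecture requires a genuinely different mechanism.
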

It is also interesting what exponents one can get for finitely presented groups, of which there are only countably many. We believe that our examples can be modified to get a dense set of exponents in $[3/4,1]$, but not to make the exponents oscillate. In this more restricted world, the following is still open.
\begin{question}
Let $X_n$ be a random walk on a finitely presented group. Is there necessarily a $\beta$ so that
$$
\ev |X_n|= n^{\beta +o(1)}?
$$
\end{question}
Related to this, we also have
\begin{conjecture}
The set of $\beta$ so that there exists a finitely presented group and a random walk on the group so that
$
\ev |X_n|= n^{\beta +o(1)}
$ is dense in $[1/2,1]$.
\end{conjecture}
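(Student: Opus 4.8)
The plan is to reduce the conjecture to two ingredients which are individually plausible with our machinery: a power-law version of the main construction producing a dense set of exponents, and a finitely presented realization with controlled metric distortion.

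\textbf{Step 1 (a dense family of clean power laws).} First I would specialize Theorem~\ref{t:main speed} to \emph{eventually periodic} degree sequences $m$. If, after a finite prefix, $m_i$ takes only the values $2$ and a fixed $m_*$ in blocks of asymptotic proportions $p$ and $1-p$ with $p\in\mathbb Q$, then $\log v_{\ell}$ and $\log r_\ell$ grow linearly in $\ell$ with explicit slopes, so $\alpha_n=\log v_{\ell(n)}/\log n$ converges to an explicit $\alpha(p)$ and, by Corollary~\ref{c:return_probs_wreath} and Theorem~\ref{t:speed m general}, $\ev|X_n|\asymp n^{\beta}$ with $\beta=(1+\alpha(p))/2$. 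Letting $p$ range over rationals and $m_*$ over integers, $\beta$ fills a dense subset of $[3/4,1]$ unconditionally; to reach $[1/2,1]$ one must invoke the missing-exponents conjecture above (and arrange its constructions to be of the same self-similar flavor, since Erschler's $\mathbb Z\wr$ trick destroys finite presentability). So it suffices, for each such $\beta$, to produce a \emph{finitely presented} group carrying a random walk with $\ev|X_n|=n^{\beta+o(1)}$.

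\textbf{Step 2 (finitely presented cover).} Fix such a construction with $m$ \emph{eventually constant}, $m_i=m_*$ for large $i$. Then the shift $\tau m=m$ eventually, and $\mm$ — hence, after passing to a finite-index subgroup if needed, $\Gamma=\mathbb Z\wr_S\mm$ — is self-similar: the section-at-a-child recursion defines an injective endomorphism $\phi$ of $\Gamma$ under which $\Gamma$ is generated by finitely many elements together with their $\phi$-iterates, i.e.\ $\Gamma$ is finitely $L$-presented. I would then pass to the ascending HNN extension $\hat\Gamma=\langle\,\Gamma,t\mid t\gamma t^{-1}=\phi(\gamma)\,\rangle$, which is finitely presented (this is the standard way finite $L$-presentations yield finitely presented covers) and contains $\Gamma$. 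The new technical point to establish is a distortion bound $|\gamma|_{\hat\Gamma}\ge|\gamma|_{\Gamma}^{1-o(1)}$ for $\gamma\in\Gamma$ with respect to a natural finite generating set of $\hat\Gamma$; this should hold because conjugation by $t$ realizes the geometric level-shift of the tree $\tm$, so $t^k\gamma t^{-k}$ only modifies deep sections and cannot create long cancellations in $\Gamma$-words.

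\textbf{Step 3 (transferring the exponent to $\hat\Gamma$), and the main obstacle.} A random walk on $\hat\Gamma$ leaves $\Gamma$, so the speed exponent is not automatically inherited; one must redo the inverted-orbit and birth-and-death analysis of Sections~\ref{s.inverted}--\ref{s:p mother groups} directly for $\hat\Gamma$. Here $\hat\Gamma$ acts on the boundary of a ``tower of trees'' obtained by stacking the assembly lines along $\phi$, and the corresponding Schreier-graph walk should again project to a birth-and-death chain with the same first-return exponent, together with an essentially independent $\mathbb Z$-valued coordinate recording the exponent of the stable letter $t$, which contributes only a diffusive $n^{1/2}$ term; one would then conclude $\ev|X_n^{\hat\Gamma}|\asymp n^{\beta}+n^{1/2}=n^{\beta}$ since $\beta\ge1/2$. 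This last step, together with the distortion estimate of Step~2, is exactly where our present techniques stop short: there is no soft mechanism pushing a speed exponent through a group embedding, so one genuinely has to control the geometry of the ascending HNN extension — in particular, show the birth-and-death/inverted-orbit structure survives stacking and that the $t$-direction costs no more than $n^{1/2}$ — and simultaneously handle the finite prefix of the degree sequence used to tune $\beta$ while keeping $m$ eventually constant so that $\phi$ exists. This is why we record it only as a conjecture.
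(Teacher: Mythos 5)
This statement is one of the paper's stated \emph{conjectures}: the authors give no proof of it, only the remark that they ``believe'' their examples can be modified to yield a dense set of exponents. So there is nothing in the paper to compare your argument against, and your proposal, as you yourself acknowledge in Step 3, is a program rather than a proof. That said, it is worth naming precisely where the program breaks, because two of the gaps are more serious than your write-up suggests.

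First, Step 1 only gives density in $[3/4,1]$ unconditionally; density in $[1/2,3/4)$ requires the paper's other open conjecture, so at best you would be proving a conditional or partial statement. Second, and more importantly, the distortion bound in Step 2 is almost certainly false as stated. In an ascending HNN extension $\hat\Gamma=\langle \Gamma, t\mid t\gamma t^{-1}=\phi(\gamma)\rangle$ built from the section/wreath recursion of a contracting self-similar group, the base $\Gamma$ is typically \emph{exponentially} distorted: elements of the form $\phi^k(\gamma)$ have word length in $\hat\Gamma$ growing linearly in $k$ while their length in $\Gamma$ grows exponentially. This is exactly the mechanism behind Grigorchuk's finitely presented HNN extension of his group, and it is not a removable technicality --- the contraction that makes $\phi$ well defined is the same phenomenon that produces the distortion. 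Consequently $|\gamma|_{\hat\Gamma}\ge |\gamma|_{\Gamma}^{1-o(1)}$ cannot be expected, and with exponential distortion the speed exponent of a walk on $\Gamma$ says essentially nothing about distances measured in $\hat\Gamma$. Third, even granting a metric comparison, there is no soft transfer of speed exponents along subgroup embeddings (the walk on $\hat\Gamma$ leaves $\Gamma$, and finite-index or quasi-isometry arguments do not apply), so the entire inverted-orbit and return-probability analysis would have to be redone for $\hat\Gamma$ from scratch, as you note. In short: the reduction to ``finitely $L$-presented implies finitely presented cover'' is the standard and natural first move, but the metric and probabilistic control over that cover is precisely the open content of the conjecture, and your Steps 2--3 assume rather than supply it.
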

The {\bf Hilbert compression exponent} $\alpha^*(G)$ of a group
of $G$ is the supremum over all
$\alpha\ge 0$ such that there exists a $1$-Lipschitz mapping $f:G\to
L_2$ and a constant $c>0$ such that for all $x,y\in G$ we have
$\|f(x)-f(y)\|_2\ge c\mbox{dist}(x,y)^\alpha.$ \citet*{ANP} showed that the speed exponent $\beta(G)$, if exists, satisfies
\begin{equation}\label{e:ANP}
\alpha(G)\beta(G)\le 2.
\end{equation} Thus every embedding gives an upper bound on the speed exponent. We do not know if this bound can be sharp for our examples $\mathcal M_m$ in the case when $\beta$ exists.
\begin{question} Assume that
$m$ is a constant sequence. Is it true that $\alpha(\mathbb Z\wr_S \mathcal M_m) \beta(Z\wr_S \mathbb M_m)$=2?
\end{question}
This would give an alternative upper bound, and apart from an  $n^{o(1)}$ fraction of error, would strengthen our theorem. It would also give new examples for Hilbert compression exponents.

The groups we are considering are permutation wreath products by piecewise automaton groups. It is still an open question to give the precise speed, or even entropy exponent on any of the classical automaton groups (except perhaps the lamplighter group); we will skip the definition of these groups but they can be found in, for example \cite{AAV}.
\begin{problem}
Specify the speed exponent or the entropy exponent in any of the following groups: Grigorchuk's group, Basilica group,  Hanoi towers group, degree-zero mother groups.
\end{problem}

\smallskip
\noindent{\bf Acknowledgments.} We thank Omer Angel for useful discussions, as well as Gady Kozma for the idea of Lemma \ref{l:Gady} which allowed us to remove a logarithmic error term from our bounds. We also think P\'eter Mester and Andrew Stewart for carefully reading previous versions. The work of the first author was supported by the Israeli Science Foundation. The research of G.A.\ was supported by the Israel Science Foundation grant ISF $1471/11$ and by Young Scientist grant $\# I-1121-304.1-2012$ from the German Israeli Foundation. The work of
B.V.\ was supported by the Canada Research Chair program, the Marie Curie grant SPECTRA, the OTKA grant $K109684$, and the NSERC Discovery Accelerator Program.
\bibliographystyle{dcu}
\bibliography{speedexponent}

\bigskip\bigskip\bigskip\noindent
\begin{minipage}{0.49\linewidth}
Gideon Amir
\\Department of Mathematics
\\Bar Ilan University
\\Ramat Gan, 52900 Israel
\\{\tt amirgi@macs.biu.ac.il }
\end{minipage}
\begin{minipage}{0.49\linewidth}
B\'alint Vir\'ag
\\Departments of Mathematics and Statistics
\\University of Toronto
\\Toronto ON~~M5S 2E4, Canada
\\{\tt balint@math.toronto.edu}
\end{minipage}

\end{document}